	\renewcommand{\todo}[2][]{\tikzexternaldisable\@todo[#1]{#2}\tikzexternalenable}
	\newcommand{\tikzexternaldisable}{}
	\newcommand{\tikzexternalenable}{}
\newcommand{\biblio}
{\printbibliography}
\newcommand{\biblio}%
{
\bibliographystyle{alpha}
\bibliography{Shifted.bib}}
\author[S.~Matsumoto]{Sho Matsumoto}
\address{
	Graduate School of Science and Engineering, Kagoshima University 
	1-21-35, Korimoto, Kagoshima, Japan 
}
\email{shom@sci.kagoshima-u.ac.jp}
\author[P.~\'Sniady]{Piotr \'Sniady}
\address{
Institute of Mathematics, Polish Academy of Sciences,
\mbox{ul.~\'Sniadec\-kich 8,} \linebreak 00-956 Warszawa, Poland
} 
\email{psniady@impan.pl}
\newcommand{\Z}{\mathbb{Z}}
\newcommand{\C}{\mathbb{C}}
\newcommand{\Q}{\mathbb{Q}}
\newcommand{\KO}{\mathbb{A}}
\newcommand{\F}{\mathcal{F}}
\newcommand{\G}{\mathcal{G}}
\newcommand{\Sym}[1]{\mathfrak{S}_{#1}}
\newcommand{\Spin}[1]{\widetilde{\mathfrak{S}}_{#1}}
\newcommand{\SGA}[1]{\C \Sym{#1}^-}
\newcommand{\ChSpin}{\Ch^{\mathrm{spin}}}
\newcommand{\Ch}{\mathrm{Ch}}
\newcommand{\ChD}{\widetilde{\Ch}}
\newcommand{\irrepSp}{\psi}
\newcommand{\double}{D^*}
\newcommand{\Dover}{D_{\mathrm{over}}}
\newcommand{\class}{\pi}
\newcommand{\partition}{I}
\newcommand{\sett}{S}
\newcommand{\setJ}{T}
\newcommand{\const}{C}
\newcommand{\CHIL}[2]{\chi^{#1}\left(#2\right)}
\newcommand{\XX}[2]{X^{#1}\left(#2\right)}
\newcommand{\PHI}[2]{\widetilde{\phi}^{#1}\left(#2\right)}
\newcommand{\PHIeasy}[2]{\phi^{#1}\left(#2\right)}
\newcommand{\proP}{\mathbf{p}}
\newcommand{\proQ}{\mathbf{q}}
\newcommand{\stimes}{\boxtimes}
\DeclareRobustCommand{\stirlingS}{\genfrac\{\}{0pt}{}}
\DeclareRobustCommand{\stirlingF}{\genfrac[]{0pt}{}}
\newcommand{\Part}{\mathcal{P}}
\newcommand{\SP}{\mathcal{SP}}
\newcommand{\OP}{\mathcal{OP}}
\DeclareMathOperator{\GL}{GL}
\DeclareMathOperator{\PGL}{PGL}
\DeclareMathOperator{\Tr}{Tr}
\DeclareMathOperator{\ttop}{top}
\DeclareMathOperator{\id}{id}
\theoremstyle{definition}
\newtheorem{definition}{Definition}[section]
\theoremstyle{theorem}
\newtheorem{theorem}[definition]{Theorem}
\newtheorem{proposition}[definition]{Proposition}
\newtheorem{corollary}[definition]{Corollary}
\newtheorem{lemma}[definition]{Lemma}
\theoremstyle{remark}
\newtheorem{example}[definition]{Example}
\newtheorem{remark}[definition]{Remark}
\begin{document}

\subjclass[2010]{%
	20C25  	
	(Primary);
	20C30,  
	05E05  	
	(Secondary)}

\keywords{projective representations of the symmetric groups, linear
representations of the symmetric groups, asymptotic representation theory, Stanley character formula}

\title[Linear versus spin]{Linear versus spin: \\ 
representation theory\\ of the symmetric groups}

\begin{abstract}
We relate the \emph{linear} asymptotic representation theory of the symmetric
groups to its \emph{spin} counterpart. In particular, we give explicit formulas
which express the normalized irreducible \emph{spin} characters evaluated on a
strict partition $\xi$ with analogous normalized \emph{linear} characters
evaluated on the double partition $D(\xi)$. We also relate some natural
filtration on the usual (\emph{linear}) Kerov--Olshanski algebra of polynomial
functions on the set of Young diagrams with its \emph{spin} counterpart.
Finally, we give a spin counterpart to Stanley formula for the characters of the
symmetric groups.
\end{abstract}

\maketitle

\bigskip

\section{Introduction}

\emph{Spin} representation theory of the symmetric groups is a younger sibling
of the usual \emph{linear} representation theory. In the past, development of
the spin sibling took a path \emph{parallel} to that of the linear sibling
(often with a decades long delay):  the spin versions of the notions and
relationships were modelled after their linear analogues but there were somewhat
distinct and they had to be developed from scratch. The paths of the
developments of the siblings did not really \emph{cross}: there were not many
results which would directly link these two realms.

In the current paper we consider a setup in which there is a direct link between
the spin and the linear representation theory of the symmetric groups. In fact
we claim that \emph{the spin representation theory} (or at least some of its
aspects) \emph{can be derived from its linear counterpart}.

\medskip

Before we state our results
we shall recall some basic concepts of the spin representation theory of the
symmetric groups. For more details and bibliographic references refer to \cite{Wan2012,Kleshchev2005,Ivanov2004}.

\subsection{Spin representation theory of the symmetric groups}
\label{sec:projective-representations}

\subsubsection{Linear representations}
Recall that a \emph{linear} representation of a finite group $G$ 
is a group homomorphism $\irrepSp \colon G \to \GL(V)$
to the group of \emph{linear} transformations 
$\GL(V)$ 
of some finite-dimensional complex vector space~$V$.

\subsubsection{Projective representations}

A \emph{projective} representation of a finite group $G$ 
is a group homomorphism $\irrepSp \colon G \to \PGL(V)$
to the group of \emph{projective linear} transformations 
$\PGL(V)=\GL(V)/\C^{\times} $ of the projective space $P(V)$
for some finite-dimensional complex vector space $V$.
Equivalently, a projective representation can be viewed as a
map $\phi\colon G \to \GL(V)$ to the general linear group with the property that 
\[\irrepSp(x) \irrepSp(y)=c_{x,y}\ \irrepSp(xy) \]
holds true for all $x,y\in G$ for some non-zero scalar $c_{x,y}\in \C^{\times}$. 

\medskip

Each irreducible \emph{linear} representation $\irrepSp\colon G\to\GL(V)$  gives
rise to its projective version $\irrepSp\colon G\to\PGL(V)$. The irreducible
projective representations \emph{which cannot be obtained in this way} are
called \emph{irreducible spin representations} and are in the focus of the
current paper.

\subsubsection{Spin symmetric group and spin characters}
\label{sec:spin}

The \emph{spin group $\Spin{n}$} \cite{Schur1911}
is a double cover of the symmetric group:
\begin{equation} 
\label{eq:long-sequence}
1 \longrightarrow \Z_2=\{1,z\} \longrightarrow \Spin{n} \longrightarrow \Sym{n} \longrightarrow 1.
\end{equation}
More specifically, it is the 
group generated by $t_1,\dots,t_{n-1},z$
subject to the relations:
\begin{align*}
z^2   &= 1 ,\\
z t_i &= t_i z, & t_i^2&= z & \text{for $i\in [n-1]$}, \\
(t_i t_{i+1})^3 &= z  & & & \text{for $i\in [n-2]$}, \\
t_i t_j &=  z t_j t_i & & & \text{for $|i-j|\geq 2$};
\end{align*}	
we use the convention that $[k]=\{1,\dots,k\}$. Under the mapping
$\Spin{n}\to\Sym{n}$ the generators $t_1,\dots,t_{n-1}$ are mapped to the
Coxeter tranpositions $(1,2),\ (2,3),\dots,(n-1,n)\in\Sym{n}$. 

The main advantage of the spin group comes from the fact that any \emph{projective} representation
$\irrepSp\colon \Sym{n}\to \PGL(V)$ of the \emph{symmetric group} can be lifted
uniquely to a \emph{linear} representation
$\widetilde{\irrepSp}\colon\Spin{n}\to\GL(V)$ of the \emph{spin group} so that
the following diagram commutes:
\tikzexternaldisable
\[ \begin{tikzcd}
\Spin{n} \arrow[r, "\widetilde{\irrepSp}"] \arrow[d]
& \GL(V) \arrow[d] \\
\Sym{n} \arrow[r, "\irrepSp"]
& \PGL(V). \end{tikzcd}
\]
\tikzexternalenable
In this way the \emph{projective} representation theory of $\Sym{n}$ is
equivalent to the \emph{linear} representation theory of $\Spin{n}$ which allows
to speak about the characters.

\medskip

The
irreducible \emph{spin} representations of $\Sym{n}$ turn out to correspond to
irreducible \emph{linear} representations of the \emph{spin group algebra}
$\SGA{n}:= \C\Spin{n} / \langle z+1 \rangle$ which is the quotient of the group
algebra $\C\Spin{n}$ by the ideal generated by $(z+1)$. 

\subsubsection{Partitions} 

An \emph{integer partition} of $n\geq 0$ (often referred to as \emph{Young
	diagram} with $n$ boxes) is a finite sequence
$\lambda=(\lambda_1,\dots,\lambda_\ell)$ of positive integers (called
\emph{parts}) which is weakly decreasing $\lambda_1\geq \cdots\geq \lambda_\ell$
and such that $n=|\lambda|=\lambda_1+\cdots+\lambda_\ell$. The set of integer
partitions of a given integer $n\geq 0$ will be denoted by $\Part_n$, and the
set of \emph{all} integer partitions by $\Part=\bigsqcup_{n\geq 0} \Part_n$.
We sometimes write $\lambda \vdash n$ instead of $\lambda \in \Part_n$.

An integer partition is \emph{odd} if all of its parts are odd;
we denote by $\OP$ the set of odd partitions and
by $\OP_n$ the set of odd partitions of a given integer $n\geq 0$.

An integer partition $\xi=(\xi_1,\dots,\xi_\ell)$ is \emph{strict}
if it has no repeated entries; in other words the corresponding sequence  $\xi_1>\cdots>\xi_\ell$ is
\emph{strictly} decreasing. We denote by $\SP$
the set of strict partitions and by $\SP_n$ the set of strict partitions of a
given integer $n\geq 0$.

\medskip

We visualize partitions as Young diagrams drawn in the French convention and
strict partitions as \emph{shifted Young diagrams}, cf.~\cref{fig:strict}.

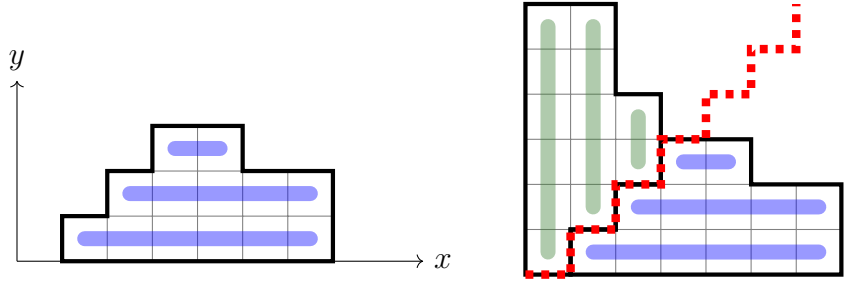
\begin{figure}[t]
	\centerline{
		\begin{tikzpicture}[xscale=0.6,yscale=0.6]
		\begin{scope}
		\clip (0,0) -- (0,1) -- (1,1) -- (1,2) -- (2,2) -- (2,3) -- (3,3) -- (4,3) -- (4,2) -- (6,2) -- (6,1) -- (6,0);
		\draw[gray] (0,0) grid (8,3);
		\end{scope}
		\draw[->] (-1,0) -- (8,0) node[anchor=west]{$x$};
		\draw[->] (-1,0) -- (-1,4) node[anchor=south]{$y$};	   
		\draw[ultra thick] (0,0) -- (0,1) -- (1,1) -- (1,2) -- (2,2) -- (2,3) -- (3,3) -- (4,3) -- (4,2) -- (6,2) -- (6,1) -- (6,0) -- cycle;
		\draw[line width=0.2cm, opacity=0.4,blue,line cap=round] (0.5,0.5) -- (5.5,0.5);
		\draw[line width=0.2cm, opacity=0.4,blue,line cap=round] (1.5,1.5) -- (5.5,1.5);
		\draw[line width=0.2cm, opacity=0.4,blue,line cap=round] (2.5,2.5) -- (3.5,2.5);	   
		\end{tikzpicture}
		\hspace{3ex}
		\begin{tikzpicture}[xscale=0.6,yscale=0.6]
		\begin{scope}[xshift=1cm]
		\clip (0,0) -- (0,1) -- (1,1) -- (1,2) -- (2,2) -- (2,3) -- (3,3) -- (4,3) -- (4,2) -- (6,2) -- (6,1) -- (6,0);
		\draw[gray] (0,0) grid (8,3);
		\end{scope}
		\begin{scope}[rotate=90,yscale=-1]
		\clip (0,0) -- (0,1) -- (1,1) -- (1,2) -- (2,2) -- (2,3) -- (3,3) -- (4,3) -- (4,2) -- (6,2) -- (6,1) -- (6,0);
		\draw[gray] (0,0) grid (8,3);
		\end{scope}    
		\begin{scope}[xshift=1cm]
		\draw[ultra thick] (0,0) -- (0,1) -- (1,1) -- (1,2) -- (2,2) -- (2,3) -- (3,3) -- (4,3) -- (4,2) -- (6,2) -- (6,1) -- (6,0) -- cycle;
		\draw[line width=0.2cm, opacity=0.4,blue,line cap=round] (0.5,0.5) -- (5.5,0.5);
		\draw[line width=0.2cm, opacity=0.4,blue,line cap=round] (1.5,1.5) -- (5.5,1.5);
		\draw[line width=0.2cm, opacity=0.4,blue,line cap=round] (2.5,2.5) -- (3.5,2.5);	 
		\end{scope}
		\begin{scope}[rotate=90,yscale=-1]
		\draw[ultra thick] (0,0) -- (0,1) -- (1,1) -- (1,2) -- (2,2) -- (2,3) -- (3,3) -- (4,3) -- (4,2) -- (6,2) -- (6,1) -- (6,0) -- cycle;
		\draw[line width=0.2cm, opacity=0.4,OliveGreen,line cap=round] (0.5,0.5) -- (5.5,0.5);
		\draw[line width=0.2cm, opacity=0.4,OliveGreen,line cap=round] (1.5,1.5) -- (5.5,1.5);
		\draw[line width=0.2cm, opacity=0.4,OliveGreen,line cap=round] (2.5,2.5) -- (3.5,2.5);	
		\end{scope}    
		\draw[line width=3.2pt, dashed,red] (0,0) -- (1,0) -- (1,1) -- (2,1) -- (2,2) -- (3,2) -- (3,3) -- (4,3) -- (4,4) -- (5,4) -- (5,5) -- (6,5) -- (6,6);
		\end{tikzpicture}
	}
	
	\caption{ Strict partition $\xi=(6,5,2)$ shown as a \emph{shifted Young
			diagram} and its double $D(\xi)=(7,7,5,3,2,2)$. } 
	\label{fig:strict}
	\label{fig:double}
\end{figure}

\subsubsection{Spin characters} 
\label{sec:intro-spin}

Schur \cite{Schur1911} proved that, roughly
speaking\footnote{\label{note1}For a precise statement see \cref{sec:spin-characters}.}, 
the conjugacy classes of $\Spin{n}$ which are non-trivial from the
viewpoint of the spin character theory are indexed by the elements of the set
$\OP_n$, i.e.~by the odd partitions of $n$.

Schur also proved that, roughly speaking\textsuperscript{\ref{note1}}, the
irreducible spin representations of $\Spin{n}$ are indexed by the elements of
the set $\SP_n$, i.e.~by strict partitions of $n$

For an odd partition $\pi\in\OP_n$ (which corresponds to a conjugacy class of
$\Spin{n}$) and a strict partition $\xi\in\SP_n$ (which corresponds to its
irreducible spin representation) we denote by $\PHIeasy{\xi}{\pi}$ the corresponding
\emph{spin character} (for some fine details related to this definition we refer
to \cref{sec:spin-characters}). 

\subsection{Normalized characters}
The usual way of viewing the linear characters $\CHIL{\lambda}{\pi}$ of the
symmetric group $\Sym{n}$ is to fix the irreducible representation $\lambda$ and
to consider the character as a function of the conjugacy class $\class$. 
The \emph{dual approach}, initiated by Kerov and Olshanski \cite{Kerov1994},
suggests to do the opposite: \emph{fix the conjugacy class $\class$ and to view
	the character as a function of the irreducible representation $\lambda$}. 
In
order for this approach to be successful one has to choose the most convenient
normalization constants which we review in the following.
\medskip

Following Kerov and Olshanski \cite{Kerov1994}, 
for a fixed integer partition $\class$ the
corresponding \emph{normalized linear character on the conjugacy class $\class$}
is the function on the set of \emph{all} Young diagrams given by
\[ \Ch_\class(\lambda):=\begin{cases}
n^{\downarrow k} 
\ \frac{ \CHIL{\lambda}{\class\cup 1^{n-k}} }{ \CHIL{\lambda}{1^{n}} } & 
\text{if } n\geq k, \\
0 & \text{otherwise,}
\end{cases}\]
where $n=|\lambda|$ and $k=|\class|$ and $n^{\downarrow k}=n (n-1) \cdots
(n-k+1)$ denotes the falling power. Above, for partitions $\lambda,\sigma\vdash
n$ we denote by $\CHIL{\lambda}{\sigma}$ the irreducible linear character of the
symmetric group which corresponds to the Young diagram $\lambda$, evaluated on
any permutation with the cycle decomposition given by $\sigma$.

\medskip

Following Ivanov \cite{Ivanov2004,Ivanov2006}, for a fixed odd partition
$\class\in\OP$ the corresponding \emph{normalized spin character} is a function
on the set of \emph{all} strict partitions given by
\begin{equation}
\label{eq:projective-normalized}
\ChSpin_\class(\xi):=\begin{cases}
n^{\downarrow k}\ 2^{\frac{k-\ell(\class)}{2}}
\ \frac{ \PHIeasy{\xi}{\class\cup 1^{n-k}}}{ \PHIeasy{\xi}{1^{n}}}
& \text{if } n\geq k, \\
0 & \text{otherwise,}
\end{cases}
\end{equation}
where $n=|\xi|$,  $k=|\class|$, 
and $\ell(\pi)$ denotes the number of parts of $\pi$.

\bigskip

\textbf{Our goal in this paper is to find relationships between the family
$(\Ch_\class)$ of linear characters and its spin counterpart $(\ChSpin_\class)$. This
quite general goal can take various more specific incarnations which we will
discuss in the following.}

\subsection{The main result 1. Characters: linear vs spin}

Firstly, we might look for some explicit algebraic formulas which express the
linear characters $(\Ch_\class)$ in terms of the spin characters $(\ChSpin_\class)$
and vice versa.

The first step in this direction is to relate the set $\SP$ of strict partitions
to the set $\Part$ of partitions which are, respectively, the domain of the linear
characters $(\Ch_\class)$ and the domain of the spin characters $(\ChSpin_\class)$.
More specifically, for a strict partition $\xi\in\SP_n$ we consider its
\emph{double} $D(\xi)\in \Part_{2n}$. Graphically, $D(\xi)$ corresponds to a
Young diagram obtained by arranging the shifted Young diagram $\xi$ and its
`transpose' so that they nicely fit along the `diagonal', cf.~\cref{fig:double},
see also \cite[Example I.1.9(a)]{Macdonald1995}.

\medskip

In \cref{sec:spin-in-terms-of-linear} we will show several formulas which relate
linear and spin characters. The following is a particularly simple special case
which nevertheless gives a flavour of the general case.

\begin{theorem}[The main result for characters, special case]
\label{thm:main-special}
\ \\ For any odd integer $k\geq 1$ and any strict partition $\xi\in\SP$
	\[ \Ch_{k}\big( D(\xi) \big) = 2\ \ChSpin_k(\xi). \]
\end{theorem}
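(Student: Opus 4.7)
Plan: My strategy is to reduce the identity to a combinatorial statement matching strip-decompositions of $D(\xi)$ with shifted strip-decompositions of $\xi$, exploiting the near-self-conjugacy of $D(\xi)$, and then verifying that the normalizing constants conspire to produce exactly the factor of $2$.

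First I would unpack both sides using the definitions from \cref{sec:intro-spin}. Writing $n=|\xi|$, so that $|D(\xi)|=2n$ and $\ell((k))=1$, the identity to be proved becomes
\[
(2n)^{\downarrow k}\,\frac{\CHIL{D(\xi)}{k\cup 1^{2n-k}}}{\CHIL{D(\xi)}{1^{2n}}}
\;=\; 2^{(k+1)/2}\, n^{\downarrow k}\,\frac{\PHIeasy{\xi}{k\cup 1^{n-k}}}{\PHIeasy{\xi}{1^n}}.
\]
Next I would compute both unnormalized character values by their combinatorial rules: the Murnaghan--Nakayama rule expresses $\CHIL{D(\xi)}{k\cup 1^{2n-k}}$ as a signed sum $\sum_{B}(-1)^{\mathrm{ht}(B)}\dimm V^{D(\xi)\setminus B}$ over border strips $B$ of length $k$ inside $D(\xi)$, while the shifted Morris--Sagan rule gives an analogous signed sum of $\PHIeasy{\xi'}{1^{n-k}}$'s over shifted border strips of length $k$ in $\xi$.

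The crucial structural input is that the Frobenius coordinates of $D(\xi)$ are $(\xi_i\mid\xi_i-1)$, so $D(\xi)$ is invariant under transposition up to a unit shift along the diagonal. Consequently, border strips of length $k$ in $D(\xi)$ split into three classes: (i) those lying strictly above the main diagonal, (ii) their mirror images lying strictly below, and (iii) those which cross the diagonal. Classes (i) and (ii) are in bijection via transposition, each pair contributing identically to the Murnaghan--Nakayama sum; since $k$ is odd, the strips in class (iii) are in canonical bijection with shifted rim-hooks of length $k$ inside the shifted diagram~$\xi$. Matching signs and sizes of complementary diagrams then identifies the two sums.

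Finally I would reconcile the normalizations by combining the hook-length formula for $\dimm V^{D(\xi)}$ with the shifted hook-length formula for $\dimm V^\xi$, together with the classical doubling relation between them. The main obstacle will be the bookkeeping of constants: the $2^{(k-1)/2}$ from the spin normalization in \cref{eq:projective-normalized}, the $2^{\ell(\xi)}$-type factors relating $Q_\xi$ to $s_{D(\xi)}$, and the falling factorials $(2n)^{\downarrow k}$ versus $n^{\downarrow k}$ must combine, after cancellation of the paired contributions from classes (i)--(ii), to leave the single surviving factor of $2$ on the right-hand side. The conceptual heart is the bijection between diagonal-crossing strips in $D(\xi)$ and shifted rim-hooks in $\xi$; the rest is careful normalization tracking, plausibly organized as an induction on~$k$ via the rim-hook recursions.
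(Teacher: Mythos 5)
Your proposed route is genuinely different from the paper's: the paper derives this as the one-part specialization of the general identity $\Ch_\pi(D(\xi)) = \sum_{S\subseteq[\ell(\pi)]} \ChSpin_{\pi(S)}(\xi)\,\ChSpin_{\pi(S^c)}(\xi)$, which it proves by applying the homomorphism $\varphi(p_r)=2p_r$ ($r$ odd), $\varphi(p_r)=0$ ($r$ even) to the Frobenius expansion of $s_{D(\xi)}$ and invoking Macdonald's identity $\varphi(s_{D(\xi)})=2^{-\ell(\xi)}Q_\xi^2$; no Murnaghan--Nakayama combinatorics appear. A direct combinatorial proof via rim hooks would be an interesting alternative, but as stated your argument has a gap that already shows up in the smallest nontrivial example.

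The gap is the claimed pairing of classes (i) and (ii) by transposition. Since $D(\xi)$ has Frobenius coordinates $(\xi_i\mid\xi_i-1)$, it is \emph{not} self-conjugate (its conjugate is $D(\xi)$ with Frobenius coordinates $(\xi_i-1\mid\xi_i)$), so transposition is not a bijection on the set of removable $k$-border strips of $D(\xi)$, and the above-diagonal and below-diagonal strips do not pair up. Concretely, take $\xi=(3)$ and $k=3$, so $D(\xi)=(4,1,1)$. The only removable $3$-strip is $(1,2),(1,3),(1,4)$, lying entirely above the diagonal; its transpose $(2,1),(3,1),(4,1)$ is not contained in $(4,1,1)$, so class (ii) is empty, class (iii) is empty, and there is no canonical cancellation or doubling at the level of strips. (The identity $\Ch_3(D(\xi))=2\,\ChSpin_3(\xi)=12$ does hold here, but the factor of $2$ arises from the normalization constants via the relation $f^{D(\xi)}/(2n)! = 2^{-\ell(\xi)}\bigl(g^\xi/n!\bigr)^2$, not from a bijection between strip families.) Beyond this, the spin recursion you invoke (Morris's rule for $Q$-functions) removes ``double strips'' and introduces signs and powers of $2$ that are considerably more intricate than the linear Murnaghan--Nakayama rule, so even if a correct bijection between surviving strips were set up, the ``careful normalization tracking'' step would need to account for these extra factors strip-by-strip. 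As written, the proposal does not establish the theorem.
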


We will prove this result (in wider generality) in \cref{sec:proof}.
The results of this type reduce the problem of calculating of the spin
characters to its linear counterpart for which often more tools are available
\cite{Rattan'Sniady2008,FeraySniady2011a,GouldenRattan2007,Biane2005/07,PetrulloSenato2011,DolegaFeraySniady2008}. 

\subsection{The main result 2. Spin version of Stanley character formula}

\subsubsection{Colorings} 
\label{sec:colorings}

Let $\sigma_1,\sigma_2\in\Sym{k}$ be permutations and
let $\lambda\in\Part$ be a Young diagram. Following \cite{FeraySniady2011a}, we
say that $(f_1,f_2)$ is a \emph{coloring} of $(\sigma_1,\sigma_2)$ which is
compatible with $\lambda$ if the following two conditions are fulfilled:
\begin{itemize}
\item $f_i\colon C(\sigma_i) \to \Z_+$ is a function on the set of cycles of
$\sigma_i$ for each $i\in\{1,2\}$; we view the values of $f_1$ as columns of
$\lambda$ and the values of $f_2$ as rows;

\item whenever $c_1\in C(\sigma_1)$ and $c_2\in C(\sigma_2)$ are cycles which are not
disjoint ($c_1\cap c_2\neq \emptyset$), the box with Cartesian coordinates
$\big( f_1(c_1), f_2(c_2) \big)$ belongs to $\lambda$.
\end{itemize} 
We denote by $N_{\sigma_1,\sigma_2}(\lambda)$ the number of such colorings of
$(\sigma_1,\sigma_2)$ which are compatible with $\lambda$.

\begin{figure}
	\centerline{
		\begin{tikzpicture}[scale=1]
		\draw [->] (0,0) -- (5,0) node[anchor=west] {$x$};
		\draw [->] (0,0) -- (0,3) node[anchor=west] {$y$};
		\begin{scope}
		\clip (0,0) -- (3,0) -- (3,1) -- (1,1) -- (1,2) -- (0,2);
		\draw (0,0) grid (2,2);
		\end{scope}
		\draw[ultra thick] (0,0) -- (3,0) -- (3,1) -- (1,1) -- (1,2) -- (0,2) -- cycle;
		\draw[line width=7pt, opacity=0.3, draw=red] (0.5,-0.5) node[anchor=north,opacity=1, text=black]{$f_1(V)$} -- (0.5,2.5); 
		\draw[line width=7pt, opacity=0.3, draw=blue] (2.5,-0.5) node[anchor=north,opacity=1, text=black]{$f_1(W)$} -- (2.5,1.5); 
		\draw[line width=7pt, opacity=0.3, draw=OliveGreen] (-0.5,0.5) node[anchor=east,opacity=1, text=black]{$f_2(\Pi)$} -- (3.5,0.5); 
		\draw[line width=7pt, opacity=0.3, draw=black] (-0.5,1.5) node[anchor=east,opacity=1, text=black]{$f_2(\Sigma)$} -- (1.5,1.5); 
		\end{tikzpicture}
	} 
	\caption{Graphical representation of the coloring \eqref{eq:coloring} of the
		permutations \eqref{eq:example}  which is compatible with the Young diagram
		$\lambda=(3,1)$.} \label{fig:embed}
\end{figure}
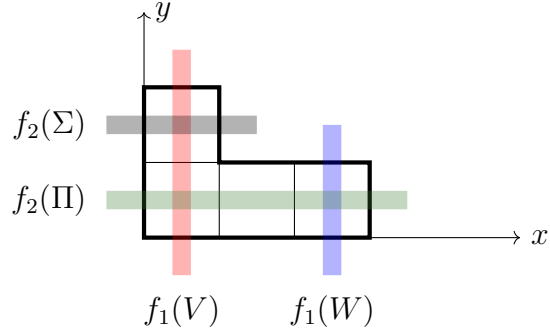

\begin{example}
	Let
	\begin{equation}
	\label{eq:example} 
	\sigma_1=\underbrace{(1,5,4,2)}_{\textcolor{red}{V}}\underbrace{(3)}_{\textcolor{blue}{W}},\qquad  \sigma_2 = \underbrace{(2,3,5)}_{\textcolor{OliveGreen}{\Pi}}\underbrace{(1,4)}_{\Sigma}.
	\end{equation}
	There are three pairs of cycles $(\sigma_1,\sigma_2)\in C(\sigma_1)\times
	C(\sigma_2)$ with the property that $\sigma_1$ and $\sigma_2$ are not disjoint,
	namely $(V,\Pi), (V,\Sigma), (W,\Pi)$.
	It is now easy to check graphically (cf.~\cref{fig:embed}) that $(f_1,f_2)$ given by 
	\begin{equation} 
	\label{eq:coloring}
	f_1(V)=1, \quad f_1(W)=3, \quad f_2(\Pi)=1, \quad f_2(\Sigma)=2
	\end{equation}
	is indeed an example of a coloring compatible with $\lambda=(3,1)$.

	By considering four possible choices for the values of $f_2$ and counting the
	choices for the values of $f_1$ one can verify that \[ N_{\sigma_1,\sigma_2}(\lambda)=
	3^2 +  3 + 1 + 1 =14\qquad \text{for }\lambda=(3,1).\]
\end{example}

\subsubsection{Linear Stanley character formula} 

We will identify a given integer partition $\pi=(\pi_1,\dots,\pi_\ell)\vdash k$
with an arbitrary permutation $\pi\in\Sym{k}$ with the corresponding cycle
structure. For example, we may take
\[ \pi=(1,2,\dots,\pi_1)(\pi_1+1,\pi_1+2,\dots,\pi_1+\pi_2) \cdots\in\Sym{k}.\]

The following result turned out in the past to be a convenient tool for studying
asymptotic \cite{FeraySniady2011a} and enumerative problems
\cite{DolegaFeraySniady2008} of the (linear) representation theory of the
symmetric groups.

\begin{theorem}[\cite{FeraySniady2011a}]
	\label{thm:stanley-linear}
	For any partition $\pi\in\Part_k$ and any Young diagram $\lambda\in\Part$
	\begin{equation}  
	\label{eq:stanley-linear} 
	\Ch_\pi(\lambda) = \sum_{\substack{\sigma_1,\sigma_2\in\Sym{k} \\ \sigma_1 \sigma_2=\pi }}  
	(-1)^{\sigma_1} N_{\sigma_1,\sigma_2} (\lambda). 
	\end{equation}
\end{theorem}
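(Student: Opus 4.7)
The plan is to follow the polynomial interpolation approach of \cite{FeraySniady2011a}. I would parametrize Young diagrams by \emph{multirectangular coordinates} $(\mathbf{p}, \mathbf{q}) = (p_1, \dots, p_m; q_1, \dots, q_m)$, which describe $\lambda(\mathbf{p},\mathbf{q})$ as an assembly of $m$ rectangular blocks of widths $p_i$ and heights $q_i$. The strategy has three stages: first, show that both sides of \eqref{eq:stanley-linear} are polynomial functions of $(\mathbf{p}, \mathbf{q})$; second, verify the identity when $\lambda$ is a single rectangle; third, conclude by the polynomial identity principle.

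For stage one, polynomiality of $\Ch_\pi$ as a function of multirectangular coordinates is a classical fact coming from the structure of the Kerov--Olshanski algebra. For the right-hand side, I would analyze $N_{\sigma_1, \sigma_2}\bigl(\lambda(\mathbf{p}, \mathbf{q})\bigr)$ by partitioning the colorings $(f_1, f_2)$ according to which vertical strip each value $f_1(c_1)$ lands in and which horizontal strip each value $f_2(c_2)$ lands in; summing the number of choices within each strip yields a polynomial in the $p_i$ and $q_i$ whose coefficients are determined by the compatibility constraints on intersecting pairs of cycles. For stage two, when $\lambda = p \times q$ is a single rectangle every pair of cycles $(c_1, c_2)$ with $c_1 \cap c_2 \neq \emptyset$ is automatically compatible, so $N_{\sigma_1, \sigma_2}(p \times q) = p^{\ell(\sigma_1)} q^{\ell(\sigma_2)}$; the right-hand side reduces to $\sum_{\sigma_1 \sigma_2 = \pi} (-1)^{\sigma_1} p^{\ell(\sigma_1)} q^{\ell(\sigma_2)}$, which is Stanley's original rectangular character formula, provable directly via Frobenius' formula applied to the rectangular Young diagram together with a reindexing of the resulting double sum. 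Stage three is then immediate: two polynomials in $(\mathbf{p}, \mathbf{q})$ that coincide on all integer rectangles agree as polynomials.

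The main obstacle is stage two, since Stanley's rectangular formula is itself a nontrivial theorem requiring genuine representation-theoretic input. An alternative modern route bypasses rectangles entirely by working directly with Jucys--Murphy elements acting on Specht modules and constructing a bijective correspondence between factorizations $\sigma_1\sigma_2 = \pi$ and certain labelled combinatorial structures on $\lambda$, but the multirectangular interpolation sketched above is the most transparent organization of the argument.
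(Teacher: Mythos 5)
The paper does not actually prove this theorem; it is cited from \cite{FeraySniady2011a} and the paper only sketches, later in the introduction, the idea behind the proof given there: one considers the vector space $V$ spanned by fillings of the boxes of $\lambda$ with $[n]$, equips it with the $\Sym{n}$-action by relabelling, and applies the Young symmetrizer $\Pi$ (row symmetrization composed with column antisymmetrization). The image $\Pi V$ is the Specht module, and the character of $\Pi V$, computed by summing over the group elements in the symmetrizer, matches the right-hand side of \eqref{eq:stanley-linear} term by term. Your proposal takes a genuinely different route — polynomial interpolation in multirectangular coordinates rather than a direct representation-theoretic computation.

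The difficulty with your proposal is not stage two (which you correctly flag as requiring Stanley's rectangular theorem as an input) but stage three, which you call ``immediate'' and which is in fact where the argument breaks. Fix the number $m$ of rectangular blocks. Both sides of \eqref{eq:stanley-linear} evaluated on $\lambda(\mathbf{p},\mathbf{q})$ are polynomials in $2m$ variables. The set of single rectangles corresponds to the two-dimensional slice $\{p_2 = \cdots = p_m = q_2 = \cdots = q_m = 0\}$; two polynomials in $2m$ variables can agree identically on such a codimension-$(2m-2)$ affine subspace without being equal (for example, the polynomial $p_2$ vanishes on every single rectangle but is not the zero polynomial). So polynomial agreement on single rectangles does not propagate to agreement on multirectangular shapes with $m \geq 2$, and the interpolation step has no content. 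This is precisely why the multirectangular generalization of Stanley's conjecture (conjectured by Stanley, proved by F\'eray \cite{Feray2010}, reproved in \cite{FeraySniady2011a}) required a new argument and did not follow formally from the rectangular case. To repair your approach you would need either a verification on multirectangular shapes for every $m$ — which is the full statement, not an input — or a recursion relating the $m$-rectangle polynomial to the $(m{-}1)$-rectangle one, which is a nontrivial structural fact that your sketch does not supply.
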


\subsubsection{Spin Stanley character formula}

For $\sigma_1,\sigma_2\in\Sym{k}$ we denote by \mbox{$|\sigma_1\vee \sigma_2|$} the
number of 
orbits in the set $[k]=\{1,\dots,k\}$ under the action of the group $\langle
\sigma_1,\sigma_2\rangle$ generated by $\sigma_1,\sigma_2$. Equivalently, it is the number of parts of the
set-partition $\sigma_1 \vee \sigma_2$ (which is the minimal set-partition with
respect to the refinement order which is greater than each of the two
set-partitions which correspond to the cycles of $\sigma_1$ and $\sigma_2$).

One of the main results of the current paper is the following spin analogue of
\cref{thm:stanley-linear}.

\begin{theorem}[The main result: spin Stanley character formula]
	\label{thm:spin-Stanley}
	For any odd partition $\pi\in\OP_k$ and any strict partition $\xi\in\SP$
	\begin{equation}
	\label{eq:spin-stanley}
	  \ChSpin_\pi(\xi) = \sum_{\substack{\sigma_1,\sigma_2\in\Sym{k} \\ \sigma_1 \sigma_2=\pi }} 
	  \frac{1}{2^{|\sigma_1\vee \sigma_2|}}\ (-1)^{\sigma_1}\ N_{\sigma_1,\sigma_2}\big( D(\xi) \big). 
	  \end{equation}
\end{theorem}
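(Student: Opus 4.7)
The plan is to reduce \cref{thm:spin-Stanley} to the linear Stanley formula (\cref{thm:stanley-linear}) via the ``linear versus spin'' dictionary that extends \cref{thm:main-special} to multi-part odd partitions. The overall idea is to group both sides of the claimed identity according to the orbit structure of $\langle\sigma_1,\sigma_2\rangle$ and thereby reduce everything to a single identity living on the partition lattice of the cycles of $\pi$.

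To make this precise, introduce the \emph{connected} (transitive) Stanley character
\[
\Ch^{\mathrm{conn}}_\tau(\lambda):=\sum_{\substack{\sigma_1,\sigma_2\in\Sym{|\tau|}\\ \sigma_1\sigma_2=\tau\\ \langle\sigma_1,\sigma_2\rangle\text{ transitive}}} (-1)^{\sigma_1}\,N_{\sigma_1,\sigma_2}(\lambda).
\]
Since the orbit partition of $\langle\sigma_1,\sigma_2\rangle$ is always a coarsening of the cycle partition of $\sigma_1\sigma_2=\pi$, and since both the sign $(-1)^{\sigma_1}$ and the coloring count $N_{\sigma_1,\sigma_2}(\lambda)$ factorize over the orbits, \cref{thm:stanley-linear} refines into the moment-cumulant-type identity
\[
\Ch_\pi(\lambda)=\sum_{P}\prod_{B\in P}\Ch^{\mathrm{conn}}_{\pi|_B}(\lambda),
\]
where $P$ ranges over set partitions of the set of cycles of $\pi$ and $\pi|_B$ denotes the sub-partition made of the cycles belonging to the block $B$. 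Performing the same regrouping on the right-hand side of \eqref{eq:spin-stanley}, where each orbit additionally contributes a factor $1/2$ coming from $1/2^{|\sigma_1\vee\sigma_2|}$, yields
\[
\text{RHS of \eqref{eq:spin-stanley}}=\sum_{P}\frac{1}{2^{|P|}}\prod_{B\in P}\Ch^{\mathrm{conn}}_{\pi|_B}\bigl(D(\xi)\bigr).
\]

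It therefore suffices to prove the identity
\[
\ChSpin_\pi(\xi)=\sum_{P}\frac{1}{2^{|P|}}\prod_{B\in P}\Ch^{\mathrm{conn}}_{\pi|_B}\bigl(D(\xi)\bigr),
\]
which is the multi-part generalization of \cref{thm:main-special}. Consistency with the single-cycle case is immediate: for $\pi=(k)$ every factorization $\sigma_1\sigma_2=\pi$ is automatically transitive, so only $P=\bigl\{[k]\bigr\}$ contributes and one recovers exactly $\ChSpin_k(\xi)=\tfrac12\,\Ch_k\bigl(D(\xi)\bigr)$.

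The multi-part identity is the main obstacle, and I would approach it by induction on $\ell(\pi)$ working inside the spin group algebra $\SGA{n}$: lift the disjoint odd cycles $\pi^{(1)},\dots,\pi^{(\ell)}$ of $\pi$ to elements of $\SGA{n}$ (which pairwise commute only up to the central sign $z\equiv -1$), take their product and evaluate it in the irreducible spin representation indexed by $\xi$, and compare the outcome with the product of the linear characters of $D(\xi)$ evaluated at $\pi^{(1)},\dots,\pi^{(\ell)}$. The sign corrections produced by commuting the lifts, combined with the spin normalization factor $2^{(k-\ell(\pi))/2}$ in \eqref{eq:projective-normalized}, are precisely what generates the weighted moment-cumulant expansion over set partitions with weights $1/2^{|P|}$, with \cref{thm:main-special} providing the base case of the induction.
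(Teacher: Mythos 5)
Your reduction step is sound: both the sign $(-1)^{\sigma_1}$ and the coloring count $N_{\sigma_1,\sigma_2}(\lambda)$ factor over the orbits of $\langle\sigma_1,\sigma_2\rangle$, so the linear Stanley formula does refine to
\[
\Ch_\pi(\lambda)=\sum_{P}\prod_{B\in P}\Ch^{\mathrm{conn}}_{\pi|_B}(\lambda),
\]
and the right-hand side of \eqref{eq:spin-stanley} regroups to $\sum_P 2^{-|P|}\prod_B\Ch^{\mathrm{conn}}_{\pi|_B}(D(\xi))$. In effect you have rederived the structural observation made in the paper's proof that the coefficient $c_{\sigma_1,\sigma_2}$ multiplying $(-1)^{\sigma_1}N_{\sigma_1,\sigma_2}(D(\xi))$ must depend only on $|\sigma_1\vee\sigma_2|$; your target ``multi-part identity'' is exactly the assertion that this coefficient equals $1/2^{|\sigma_1\vee\sigma_2|}$.

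The gap is that you do not prove that multi-part identity, and the mechanism you sketch for it is flawed. You propose to lift the disjoint odd cycles $\pi^{(1)},\dots,\pi^{(\ell)}$ to $\SGA{n}$ and extract the $1/2^{|P|}$ weights from ``sign corrections produced by commuting the lifts.'' But for \emph{odd} cycles $\sigma,\tau$ the Coxeter lengths $\|\sigma\|,\|\tau\|$ are both even, so the lifts commute \emph{exactly} (the relevant central sign $z^{\|\sigma\|\cdot\|\tau\|}$ is trivial); there are no commutation signs to feed a moment--cumulant expansion. Nor does $\Tr\psi^\xi\bigl(\prod_i\tilde\pi^{(i)}\bigr)$ factor into a product of single-cycle traces, so the inductive step has no obvious handle. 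The $1/2$'s in this subject come from the normalization in \eqref{eq:projective-normalized} and from the $Q$-function identity $\varphi(s_{D(\xi)})=2^{-\ell(\xi)}Q_\xi^2$, not from anticommutation. The paper supplies the multi-part identity via \cref{thm:general_characters} (proved with Schur versus Schur-$Q$ functions), inverts it to get \cref{theo:spin-in-linear}, observes that the resulting coefficient $c_{\sigma_1,\sigma_2}$ depends only on $m=|\sigma_1\vee\sigma_2|$ as a Stirling-number sum, and then evaluates $C_m=1/2^m$ indirectly by specializing to $\pi=1^m$ and comparing leading coefficients in $n$. Your proposal needs this (or an equivalent) input; with only the base case $\pi=(k)$ in hand and a hand-waved inductive step that rests on a nonexistent sign obstruction, the proof does not close.
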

The proof is postponed to \cref{sec:proof-of-stanley}. 

\subsubsection{Application: combinatorics of spin Kerov polynomials and Stanley polynomials}

In the context of the usual (linear) asymptotic representation theory of the
symmetric groups it is convenient to parametrize the set of Young diagrams by
\emph{free cumulants} \cite{Biane1998}. Each linear character $\Ch_\pi$ can be
then expressed uniquely as a polynomial (called \emph{Kerov character
	polynomial}) in these free cumulants. Kerov polynomials turned out to have a
very rich structure from the viewpoint of enumerative and algebraic
combinatorics
\cite{Biane2003,GouldenRattan2007,Biane2005/07,PetrulloSenato2011,DolegaFeraySniady2008,Sniady2013}. 
Combinatorics of Kerov polynomials is intimately related to
\emph{multirectangular coordinates} of Young diagrams and the corresponding
\emph{Stanley character polynomials}, see \cref{sec:multirectangular}.

Recently De Stavola \cite{DeStavolaThesis} and the first-named author
\cite{Matsumoto2018} initiated investigation of spin counterparts of these
notions. In a forthcoming publication \cite{MatsumotoSniady2018c} we will
present applications of \cref{thm:spin-Stanley} to combinatorics of spin Kerov
polynomials and spin Stanley character polynomials.

\subsubsection{Application: bounds on spin characters}

The following character bound is a spin version of an analogous result for the
linear characters of the symmetric group \cite{FeraySniady2011a}. It is a direct
application of \cref{thm:spin-Stanley} and its proof follows the same line as
its linear counterpart \cite{FeraySniady2011a}.

\begin{corollary}
	\label{cor:Annals}
	There exists a universal constant $a>0$ with the property that for any integer
	$n\geq 1$, any strict partition $\xi=(\xi_1,\xi_2 \dots)\in\SP_n$, and any odd partition
	$\pi\in\OP_n$
	\[
	\left| \frac{\PHIeasy{\xi}{\pi}}{\PHIeasy{\xi}{1^n}} \right| 
< \left[a \max\left( \frac{\xi_1}{n} , \frac{n-\ell(\pi)}{n} \right) \right]^{n-\ell(\pi)}. \]
\end{corollary}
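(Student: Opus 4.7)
I would transplant the proof of the analogous linear character bound in \cite{FeraySniady2011a} to the spin setting, substituting the spin Stanley formula \cref{thm:spin-Stanley} for its linear counterpart. First, I would unfold the normalization \eqref{eq:projective-normalized}: with $|\pi|=|\xi|=n$ it reads $\ChSpin_\pi(\xi)=n!\,2^{(n-\ell(\pi))/2}\,\PHIeasy{\xi}{\pi}/\PHIeasy{\xi}{1^n}$, so the desired bound becomes equivalent to
\[ |\ChSpin_\pi(\xi)|<n!\,2^{(n-\ell(\pi))/2}\left[a\max\left(\frac{\xi_1}{n},\frac{n-\ell(\pi)}{n}\right)\right]^{n-\ell(\pi)}. \]
Then I would apply \cref{thm:spin-Stanley} together with the triangle inequality and the trivial estimate $1/2^{|\sigma_1\vee\sigma_2|}\leq 1$ to obtain
\[ |\ChSpin_\pi(\xi)|\leq \sum_{\substack{\sigma_1,\sigma_2\in\Sym{n}\\\sigma_1\sigma_2=\pi}} N_{\sigma_1,\sigma_2}\bigl(D(\xi)\bigr). \]
The right-hand side is precisely the sum of unsigned contributions that the argument of \cite{FeraySniady2011a}, applied to the Young diagram $D(\xi)\vdash 2n$ and to the partition $\pi$, controls by bounding each colouring count in terms of $\lambda_1$, $\lambda_1'$, the cycle numbers of $\sigma_i$, and standard genus-type inequalities for factorizations $\sigma_1\sigma_2=\pi$.

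It then remains to translate the bound for $D(\xi)\vdash 2n$ into one for $\xi\vdash n$. The Frobenius coordinates of $D(\xi)$ are $(\xi_i\mid \xi_i-1)_{i=1,\dots,\ell(\xi)}$, so $D(\xi)_1=\xi_1+1$ and $D(\xi)_1'=\xi_1$; hence $\max\bigl(D(\xi)_1,D(\xi)_1'\bigr)/|D(\xi)|\leq \xi_1/n$, while the exponent $|D(\xi)|-\ell(\pi\cup 1^{|D(\xi)|-|\pi|})$ of the linear bound simplifies to $2n-(\ell(\pi)+n)=n-\ell(\pi)$, exactly matching the target. After absorbing the normalization factor $2^{(n-\ell(\pi))/2}$ and the universal prefactor coming from the linear estimate into a sufficiently large choice of $a$, the desired inequality follows.

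The main delicate point is that \cite{FeraySniady2011a} phrases its estimate for the \emph{signed} quantity $|\Ch_\pi(\lambda)|$, whereas what we use is a bound on the \emph{unsigned} sum $\sum N_{\sigma_1,\sigma_2}(\lambda)$; a reading of op.~cit.\ shows, however, that the combinatorial arguments there bound the unsigned sum directly, never exploiting the signs $(-1)^{\sigma_1}$, so the estimate transfers to our setting without modification. A minor additional bookkeeping task is to verify that the polynomial and exponential prefactors accumulated during the passage from $|D(\xi)|=2n$ and the cycle type $\pi\cup 1^{n}$ back to $n$ and $\pi$ are harmless, which works because every such factor is controlled by a power of $n-\ell(\pi)$ (or vanishes trivially when $n=\ell(\pi)$).
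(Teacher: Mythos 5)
The paper offers no detailed proof of \cref{cor:Annals} (it only remarks that the result ``follows the same line as'' \cite{FeraySniady2011a}), so there is no written argument to compare against; your plan is the natural fleshing-out of that remark, and most of your bookkeeping (the normalization unfold, the Frobenius-coordinate computation giving $D(\xi)_1=\xi_1+1$ and $D(\xi)_1'=\xi_1$, the exponent match $2n-\ell(\pi\cup 1^n)=n-\ell(\pi)$, and the absorption of $2^{(n-\ell(\pi))/2}$ into $a$) is correct. However, there is one genuine gap that makes the central reduction fail as written.

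You pass to the unsigned sum over \emph{all} of $\Sym{n}$,
\[
|\ChSpin_\pi(\xi)|\leq\sum_{\substack{\sigma_1,\sigma_2\in\Sym{n}\\\sigma_1\sigma_2=\pi}} N_{\sigma_1,\sigma_2}\bigl(D(\xi)\bigr),
\]
and then plan to control the right-hand side by the unsigned estimate of \cite{FeraySniady2011a}. When $\pi$ has many parts equal to $1$ this is exponentially too lossy: already the single term $\sigma_1=e$, $\sigma_2=\pi$ contributes
\[
\frac{1}{2^{|e\vee\pi|}}N_{e,\pi}\bigl(D(\xi)\bigr)=\frac{1}{2^{\ell(\pi)}}\prod_{j}p_{\pi_j}\bigl(D(\xi)\bigr)\approx\frac{(2n)^{m_1(\pi)}}{2^{\ell(\pi)}}\cdot(\ldots),
\]
and for, say, $\pi=(3,1^{n-3})$ this is of order $n^{n-1}$ while the target $2\,n!\bigl[a\max(\xi_1/n,2/n)\bigr]^{2}$ is of order $n^{n-1}/e^n$ for a balanced $\xi$. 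So a single summand already overshoots by a factor growing like $e^n$, and this factor is not ``controlled by a power of $n-\ell(\pi)$'' (here $n-\ell(\pi)=2$). In other words, the massive cancellation among the many factorizations fixing most of $[n]$ is essential, and the triangle inequality over $\Sym{n}$ destroys it. The same phenomenon occurs in the linear case; the estimate of \cite{FeraySniady2011a} is proved unsigned only after the fixed points of $\pi$ have been stripped, i.e.\ the relevant sum is over $\Sym{k}$ with $k$ the size of the support of $\pi$, not over $\Sym{n}$.

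The fix is available in the paper itself: use the reduction identity \eqref{eq:reductions} to write $\ChSpin_\pi(\xi)=(n-k)!\,\ChSpin_{\tilde\pi}(\xi)$, where $\tilde\pi\in\OP_k$ is $\pi$ with its $1$-parts removed and $k=|\tilde\pi|$ (note $n-\ell(\pi)=k-\ell(\tilde\pi)$). Then apply \cref{thm:spin-Stanley} to $\tilde\pi$, so that the factorizations range over $\Sym{k}$ only. The unsigned estimate of \cite{FeraySniady2011a} applied to $D(\xi)\vdash 2n$ and $\tilde\pi$ now produces a bound of the right order, and the residual factor $(2n)^{\downarrow k}/n^{\downarrow k}$ is genuinely controllable: since every part of $\tilde\pi$ is an odd integer $\geq3$ we have $k-\ell(\tilde\pi)\geq\tfrac{2}{3}k$, so this ratio is bounded by $C^{\,k-\ell(\tilde\pi)}$ for an absolute $C$ and can be absorbed into $a$. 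With that modification your argument goes through.
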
	

Several asymptotic results about (random) Young diagrams and tableaux which use
the inequality from \cite{FeraySniady2011a} can be generalized in a rather
straightforward way to (random) \emph{shifted} Young diagrams and \emph{shifted}
tableaux thanks to \cref{cor:Annals}. A good example is provided by the results
of Dousse and F\'eray about the asymptotics of the number of skew standard Young
tableaux of prescribed shape \cite{Dousse2017} which can be generalized in this
way to asymptotics of the number of skew \emph{shifted} standard Young tableaux.

\subsubsection{Open problem: towards irreducible representations of spin groups} 

The proof of
the linear Stanley formula \eqref{eq:stanley-linear} presented in
\cite{FeraySniady2011a} was found in the following way. 
We attempted to
reverse-engineer the right-hand side of \eqref{eq:stanley-linear} and to find
\begin{itemize}
\item some natural vector space $V$ with the basis indexed by combinatorial objects;
the space $V$ should be a representation of the symmetric group $\Sym{n}$ with
$n:=|\lambda|$, and 
\item  a projection $\Pi\colon V\to V$ such that $\Pi$ commutes
with the action of $\Sym{n}$ and such that it image $\Pi V$ is an irreducible
representation of $\Sym{n}$ which corresponds to the specified Young diagram
$\lambda$,
\end{itemize}
in such a way that the corresponding character of $\Pi V$ would coincide with
the right-hand side of \eqref{eq:stanley-linear}.

Our attempt was successful: one could consider a vector space $V$ with the basis
indexed by fillings of the boxes of $\lambda$ with the numbers from $[n]$ with the
action of $\Sym{n}$ given by pointwise relabelling of the values in the boxes.
The projection $\Pi$ turned out to be the Young symmetrizer with the action
given by shuffling of the boxes in the rows and columns of $\lambda$. The
resulting representation $\Pi V$ clearly coincides with the Specht module, which
concluded the proof in \cite{FeraySniady2011a}.

\medskip

The structure of the right-hand side of \eqref{eq:spin-stanley} 
(as well as the right-hand side of \eqref{eq:spin-stanley-nonoriented}) 
might be an
indication that an analogous reverse-engineering process could be applied to the
spin case. The result would be a very explicit construction of the irreducible
spin representations which would be an alternative to the somewhat complicated approach of Nazarov \cite{Nazarov1990}.

\subsection{Stanley formulas and enumeration of maps} 
Some readers may have aesthetic objections against Equation \eqref{eq:spin-stanley} related to the somewhat ugly factor
$\frac{1}{2^{|\sigma_1\vee \sigma_2|}}$ on the right-hand side. Our remedy to
this issue is \cref{thm:maps} which avoids such factors. This result is just a
reformulation of \cref{thm:spin-Stanley} in the language of \emph{non-oriented
	maps which are orientable}. We start by introducing the notations which are
necessary to state this result.

\subsubsection{Maps}
\label{sec:maps}
We recall that a \emph{map} \cite{Lando2004} is defined as a graph $\mathcal{G}$ drawn on
a surface $\Sigma$ without boundary in such a way that each connected component of
$\Sigma\setminus \mathcal{G}$ (called \emph{face}) is
homeomorphic to an open disc.
In the literature one often adds the requirement that the surface $\Sigma$ is connected; we will not impose such a restriction.

Each map which we will consider is a \emph{bicolored map} which means that the
set of its vertices $\mathcal{V}=\mathcal{V}_\circ\sqcup \mathcal{V}_\bullet$ is
canonically decomposed into two classes (usually called white and black
vertices), with edges connecting only vertices of the opposite colors. Let
$F_1,\dots,F_\ell$ be the faces of the map; for the above reason of
bicoloration, each face $F_i$ is a polygon which consists of an even number
(say, $2\pi_i$) of edges, with neighbouring vertices painted in the alternating
colours. The integer partition $\pi=(\pi_1,\dots,\pi_\ell)$ is called the
\emph{face-type} of the map, see \cref{fig:map-projective-plane}.

With this in mind, one can alternatively view a bicolored map as a collection
$\mathcal{P}$ of polygons with labelled edges and with the vertices painted in
the alternating colours (see \cref{fig:polygons}), together with the information which pairs of the edges
should be glued together. Note that at first moment it might appear that there
are two ways to glue any given pair of edges (these two ways differ by a twist a
la M\"obius), nevertheless the bicoloration of the vertices makes the choice
unique.

\subsubsection{Non-oriented maps}
\label{sec:sum-nonoriented}

The above discussion motivates the following definition. \emph{Summation over non-oriented maps
	with a specified face-type $\pi$} \cite[Section 3.4]{DolegaFeraySniady2014}
should be understood as follows: we start by fixing an appropriate collection $\mathcal{P}$ of
polygons with labelled edges, bicolored vertices, and with face-type $\pi$.
Then we consider all perfect matchings on the set of
all edges, and for each such a perfect matching we glue the corresponding pairs
of the edges. We sum over the resulting map.

\newcommand{\faceA}{red!50}
\newcommand{\faceB}{blue}
\newcommand{\faceAfill}{red!10}
\newcommand{\faceBfill}{blue!20}

\begin{figure}[t]
	\begin{tikzpicture}[
	yscale=-0.5,xscale=0.5,
	black/.style={circle,thick,draw=black,fill=white,inner sep=4pt},
	white/.style={circle,draw=black,fill=black,inner sep=4pt},
	connection/.style={draw=black,black,auto}
	]
	\small
	
	\fill[pattern color=\faceAfill,pattern=north west lines]  (0*36:5) -- (1*36:5) -- (2*36:5) -- (3*36:5) -- (4*36:5) -- (5*36:5) -- (6*36:5) -- (7*36:5) -- (8*36:5) -- (9*36:5);
	
	\draw (0*36:5)  node (b1)     [black] {};
	\draw (1*36:5)  node (b2)     [white] {};
	\draw (2*36:5)  node (b3)     [black] {};
	\draw (3*36:5)  node (b4)     [white] {};
	\draw (4*36:5)  node (b5)     [black] {};
	\draw (5*36:5)  node (b6)     [white] {};
	\draw (6*36:5)  node (b7)     [black] {};
	\draw (7*36:5)  node (b8)     [white] {};
	\draw (8*36:5)  node (b9)     [black] {};
	\draw (9*36:5)  node (b10)    [white] {};
	
	\draw (0.5*36:4.1)  node {$1$};
	\draw (1.5*36:4.1)  node {$2$};
	\draw (2.5*36:4.1)  node {$3$};
	\draw (3.5*36:4.1)  node {$4$};
	\draw (4.5*36:4.1)  node {$5$};
	\draw (5.5*36:4.1)  node {$6$};
	\draw (6.5*36:4.1)  node {$7$};
	\draw (7.5*36:4.1)  node {$8$};
	\draw (8.5*36:4.1)  node {$9$};
	\draw (9.5*36:4.1)  node {$10$};

	\draw[connection,decoration={
		markings,
		mark=at position 0.5  with {\arrow[red,line width=1.5mm]{right to}}},
	postaction={decorate}]  
	(b1) to (b2);
    \draw[red, line width=0.5mm,->] (b1)+(-2,0) to (b1);

	\draw[connection] (b2) to  (b3);
	\draw[connection] (b3) to  (b4);
	\draw[connection] (b4) to  (b5);
	\draw[connection] (b5) to  (b6);
	\draw[connection] (b6) to  (b7);
	\draw[connection] (b7) to  (b8);
	\draw[connection] (b8) to  (b9);
	\draw[connection] (b9) to  (b10);
	\draw[connection] (b10)to (b1);
	
	\begin{scope}[shift={(10,0)},scale=0.5]
	\fill[\faceBfill]  (0:5) -- (90:5) -- (180:5) -- (270:5);
	
	\draw (90*0:5)  node (b1)     [black] {};
	\draw (90*1:5)  node (b2)     [white] {};
	\draw (90*2:5)  node (b3)     [black] {};
	\draw (90*3:5)  node (b4)     [white] {};
	
	\draw (90*0.5:2.2) node {$A$};
	\draw (90*1.5:2.2) node {$B$};
	\draw (90*2.5:2.2) node {$C$};
	\draw (90*3.5:2.2) node {$D$};
	
	\draw[connection,decoration={
	markings,
	mark=at position 0.5  with {\arrow[blue,line width=1.5mm]{right to}}},
postaction={decorate}]  
(b1) to (b2);

    \draw[blue, line width=0.5mm,->] (b1)+(-3,0) to (b1); 
	
	\draw[connection] (b2) to  (b3);
	\draw[connection] (b3) to  (b4);
	\draw[connection] (b4) to  (b1);
	\end{scope}
	\end{tikzpicture}
	
	\caption{Example of a collection of polygons with face-type $(5,2)$. Arrows
	(pointing at white vertices between the edges $1$ and $10$, as well as between
	$A$ and $D$) indicate the roots; harpoons (at the edges $1$ and $A$)
	indicate the directions of rotation in each
	polygon, see \cref{sec:non-oriented-revisited}.} \label{fig:polygons}
	
	\vspace{9ex}
	
	\begin{tikzpicture}[
yscale=-0.6,xscale=0.6,
black/.style={circle,thick,draw=black,fill=white,inner sep=4pt},
white/.style={circle,draw=black,fill=black,inner sep=4pt},
connection/.style={draw=black,black,auto}
]
\scriptsize

\begin{scope}
\clip (0,0) rectangle (10,10);

\fill[pattern color=\faceAfill,pattern=north west lines] (0,0) rectangle (10,10);
\fill[\faceBfill] (5,5) rectangle (8,8);

\draw (2,7)  node (b1)     [black] {};
\draw (12,3) node (b1prim) [black] {};
\draw (3,2)  node (w1) [white] {};
\draw (-7,8) node (w1prim) [white] {};

\draw (5,5)  node (AA)     [black] {};
\draw (8,5)  node (BA)     [white] {};
\draw (5,8)  node (AB)     [white] {};
\draw (8,8)  node (BB)     [black] {};

\draw[connection]         (w1)      to node [swap,pos=0.6] {$4$} node [pos=0.6] {$9$} (AA);
\draw[connection]         (AA)      to node [swap] {$5$} node {$D$} (AB);
\draw[connection]         (AB)      to node [swap] {$6$} node {$C$} (BB);
\draw[connection]         (BB)      to node [swap] {$7$} node {$B$} (BA);
\draw[connection,decoration={
	markings,
	mark=at position 0.5  with {\arrow[blue,line width=1.5mm]{right to}}},
postaction={decorate}]         (AA)      to node {$8$} node [swap]  {$A$} (BA);
    \draw[blue, line width=0.5mm,->] (AA)+(1,1) to (AA); 

\draw[connection]         (w1)      to node [swap,pos=0.5] {$10$} node [pos=0.425] {$2$} (b1prim);
\draw[connection]         (w1prim)  to node [swap,pos=0.87] {$2$} node [pos=0.95] {$10$} (b1);

\draw[connection,decoration={
	markings,
	mark=at position 0.5  with {\arrow[red,line width=1.5mm]{left to}}},
postaction={decorate}]         (b1)      to node [pos=0.575] {$1$}  node [swap,pos=0.5] {$3$} (w1);

    \draw[red, line width=0.5mm,->] (b1)+(-1,-1) to (b1); 

\end{scope}

\draw[very thick,decoration={
	markings,
	mark=at position 0.666  with {\arrow{>}}},
postaction={decorate}]  
(0,0) -- (10,0);

\draw[very thick,decoration={
	markings,
	mark=at position 0.666  with {\arrow{>}}},
postaction={decorate}]  
(10,10) -- (0,10);

\draw[very thick,decoration={
	markings,
	mark=at position 0.666  with {\arrow{>>}}},
postaction={decorate}]  
(10,0) -- (10,10);

\draw[very thick,decoration={
	markings,
	mark=at position 0.666  with {\arrow{>>}}},
postaction={decorate}]  
(0,10) -- (0,0);

\end{tikzpicture}
	\caption{Example of a \emph{non-oriented map} with face-type $(5,2)$ drawn on the projective plane.
		The left side of the square should be glued with a twist to the right side, 
		as well as bottom to top (also with a twist), as indicated by arrows. 
		This map has been obtained by gluing the edges of the polygons from Figure~\ref{fig:polygons}. }
	\label{fig:map-projective-plane}
	
\end{figure}

\subsubsection{Colorings revisited} 

The notion of \emph{number of colorings} $N_{\sigma_1,\sigma_2}(\lambda)$ was
introduced in \cref{sec:colorings} in the context of a pair of permutations. We review how to generalize it to maps and, more generally, bicolored graphs.

Let $\mathcal{G}$ be a bicolored graph with the vertex set $\mathcal{V}=\mathcal{V}_\circ\sqcup \mathcal{V}_\bullet$ and
let $\lambda\in\Part$ be a Young diagram. 
We say that $(f_1,f_2)$ is a \emph{coloring} of $\mathcal{G}$ which is
compatible with $\lambda$ if the following two conditions are fulfilled:
\begin{itemize}
	\item $f_1\colon \mathcal{V}_\circ \to \Z_+$ is a function on the set of white
vertices while $f_2\colon \mathcal{V}_\bullet \to \Z_+$ is a function on the
set of black vertices;
	
	\item whenever vertices $v_1\in \mathcal{V}_\circ$ and $v_2\in \mathcal{V}_\bullet$ are connected by an edge,  the box with Cartesian coordinates
	$\big( f_1(c_1), f_2(c_2) \big)$ belongs to $\lambda$.
\end{itemize} 
We denote by $N_{\mathcal{G}}(\lambda)$ the number of such colorings of
$\mathcal{G}$ which are compatible with $\lambda$.

\begin{remark}
In \cref{sec:labels} we shall explain how to associate a map $\mathcal{G}$ to a pair
$(\sigma_1,\sigma_2)$ of permutations; the above definition of $N_{\mathcal{G}}(\lambda)$ is compatible with this correspondence
in the sense that
\[ N_{\sigma_1,\sigma_2}(\lambda)= N_{\mathcal{G}}(\lambda),\]
where the left-hand side should be understood in the sense of \cref{sec:labels}.
\end{remark}

\subsubsection{Spin Stanley formula and non-oriented, orientable maps}

The following is a reformulation of \cref{thm:spin-Stanley}.
\begin{theorem}
	\label{thm:maps}
	For any odd partition $\pi\in\OP_k$ and any strict partition $\xi\in\SP$
	\begin{equation} 
	\label{eq:spin-stanley-nonoriented}  2^{\ell(\pi)}\ \ChSpin_\pi (\zeta) 
	= \sum_M (-1)^{|\pi|- |\mathcal{V}_\circ(M)|}\ N_M\big( D(\zeta) \big)  
	\end{equation}
	where 
	the sum runs over non-oriented, orientable bicolored maps with the face-type $\pi$, as in \cref{sec:sum-nonoriented}.
\end{theorem}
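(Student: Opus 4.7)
The strategy is to deduce \cref{thm:maps} from the already established \cref{thm:spin-Stanley} by translating the latter from the language of pairs of permutations into the language of non-oriented bicolored maps. The bridge is the classical bijection between pairs $(\sigma_1, \sigma_2) \in \Sym{k}^2$ satisfying $\sigma_1 \sigma_2 = \pi$ and oriented labeled bicolored maps with face-type $\pi$: white vertices correspond to cycles of $\sigma_1$, black vertices to cycles of $\sigma_2$, labeled edges to elements of $[k]$, and faces to the cycles of $\pi$ equipped with their natural cyclic order.

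First I would verify that the statistics on pairs translate correctly to the map side. The sign $(-1)^{\sigma_1}$ equals $(-1)^{|\pi|-|\mathcal{V}_\circ(M)|}$ by the standard formula for the sign of a permutation, since the number of white vertices of $M$ equals the number of cycles of $\sigma_1$. The number of joint orbits $|\sigma_1 \vee \sigma_2|$ equals the number $c(M)$ of connected components of the associated map. And the coloring counts $N_{\sigma_1,\sigma_2}(\lambda)$ and $N_M(\lambda)$ coincide by the compatibility remark at the end of \cref{sec:colorings}.

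The crux is passing from oriented maps (coming from pairs) to the non-oriented orientable maps appearing in \cref{thm:maps}, in the sense of the polygon-gluing framework of \cref{sec:sum-nonoriented}. Under this framework, each face of a non-oriented map carries an independent choice of rotation direction (the harpoons of \cref{fig:polygons}). For a fixed orientable underlying surface map $M$, exactly $2^{c(M)}$ of the $2^{\ell(\pi)}$ possible face-rotation patterns are globally consistent; these arise from a global surface orientation and correspond bijectively to the pairs of permutations producing $M$. Thus summing over non-oriented orientable maps multiplies the contribution of each abstract $M$ by $2^{\ell(\pi)-c(M)}$ relative to the pair-based sum. Combined with the factor $2^{-|\sigma_1 \vee \sigma_2|} = 2^{-c(M)}$ already present in \cref{thm:spin-Stanley}, this produces the uniform factor $2^{-\ell(\pi)}$, which rearranges to put $2^{\ell(\pi)}$ on the left-hand side of \cref{thm:maps}.

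The main obstacle is to carefully unpack the polygon-gluing definition of ``non-oriented orientable map'' and rigorously justify this multiplicity $2^{\ell(\pi)-c(M)}$. One must track three pieces of data simultaneously: the independent rotation direction at each face (contributing the global factor $2^{\ell(\pi)}$), the global orientability constraint (restricting to the $2^{c(M)}$ admissible patterns that come from pairs of permutations), and the verification that non-orientable maps are not hit by any pair and hence contribute nothing to the sum. Once this correspondence is pinned down, \cref{thm:maps} follows from \cref{thm:spin-Stanley} by a direct rearrangement of the sum.
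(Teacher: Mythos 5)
Your proposal is correct and follows essentially the same route as the paper: translate the pair-of-permutations sum in Theorem~\ref{thm:spin-Stanley} into a sum over oriented labelled maps, identify $(-1)^{\sigma_1}$, $|\sigma_1\vee\sigma_2|$, and $N_{\sigma_1,\sigma_2}$ with their map-theoretic counterparts, and then perform the same double-counting: each underlying decorated map is hit $2^{c(M)}$ times by orientations (matching the $2^{-c(M)}$ weight) and $2^{\ell(\pi)}$ times by rotation-direction patterns, yielding the factor $2^{\ell(\pi)}$. The paper organizes this as an explicit chain of de-labelling, adding rotation decorations, and forgetting the orientation, but the bookkeeping and the final cancellation are the same as in your argument.
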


The proof is postponed to \cref{sec:proof-of-maps}.

\renewcommand{\arraystretch}{1.5}

	\begin{table}
		\begin{tabular}{|m{4.1cm}|m{3.3cm}|m{3.2cm}|}
			\hline
			\textsc{type of characters}     & \textsc{type of maps}    & \textsc{references}            
			\\ \hline \hline
			\parbox[t]{4.1cm}{linear characters \\ of the symmetric groups \\ }    & oriented maps   & 	\parbox[t]{3.2cm}{\cite[Sections 6.4--6.5]{Sniady2013}, \\   \cite[Section 8]{Sniady2016}  \\     }                \\ \hline
			\parbox[t]{4.1cm}{spin characters \\ of the symmetric groups \\}       & \parbox[t]{3.3cm}{non-oriented maps \\ which are \\ orientable \\}            &  \parbox[t]{3.2cm}{current paper,\\ \cref{thm:maps}} 
			\\ \hline
			zonal characters        & \multirow{2}{3.5cm}{non-oriented maps}  &    \multirow{2}{*}{\cite[Section 5]{Feray2011}}  \\ \cline{1-1}
			\parbox[t]{4.1cm}{symplectic zonal \\ characters \\ }   &                                     &                       \\ \hline
			\multirow{2}{5cm}{top-degree \\ of Jack characters} & \parbox[t]{3.3cm}{oriented, \\ connected maps \\ with arbitrary \\  face structure \\ }      & \parbox[t]{3.3cm}{\cite[Sections 1.4.2--1.4.3]{SniadyTopDegree}}   \\ \cline{2-3} 
			     & \parbox[t]{3.3cm}{non-oriented maps \\ with prescribed \\ face structure \\} & \cite[Section 4]{Czyzewska-Jankowska2017} \\ \hline
\parbox[t]{4.1cm}{Jack characters \\ in the generic case} & 
\parbox[t]{3.3cm}{conjecturally: \\ non-oriented maps \\
	(counted with some unknown weight) \\ } & \parbox[t]{3.3cm}{partial results:\\ \cite{DolegaFeraySniady2014}} \\ \hline
		\end{tabular}
	\vspace{2ex}
	
	\caption{Classes of functions on the set of (shifted) Young diagrams for which
	some version of Stanley character formula is known 
	(or conjectured in the case of Jack characters) and the corresponding class
	of maps over which the summation is performed.}
\label{tab:table}
	\end{table}

\subsubsection{Stanley formulas and maps}

\cref{tab:table} summarizes some known and some hypothetical Stanley formulas.
It seems that there is a correspondence between some natural functions on the
set of (shifted) Young diagrams and natural classes of bicolored maps. Is there
some general pattern? Are there some natural classes of maps which are missing
in this table?

\subsection{The main result 3. Kerov--Olshanski algebra: linear vs spin}

\subsubsection{Kerov--Olshanski algebra} 

The usual (linear) Kerov--Olshanski algebra $\KO$ 
\cite{Kerov1994,Hora2016} (also known under the less compact name \emph{algebra
of polynomial functions on the set of Young diagrams}) is an important tool in
the (linear) asymptotic representation theory of the symmetric groups. One of
its advantages comes from the fact that it can be characterized in several
equivalent ways (for example as the algebra $\Lambda^*$ of \emph{shifted
symmetric functions}); it also has several convenient linear and algebraic bases
which are related to various viewpoints and aspects of the asymptotic
representation theory.

For the purposes of the current paper, Kerov--Olshanski algebra 
\[ \KO := \mathrm{span}\{ \Ch_\class : \class \in \Part \} \]
may be defined as
the linear span of the normalized linear characters of the symmetric groups. 

\subsubsection{Spin Kerov--Olshanski algebra}

We define the \emph{spin} Kerov--Olshanski algebra 
(maybe \emph{Ivanov algebra} would be an even better name)
\begin{equation}
\label{eq:Gamma}
 \Gamma := \mathrm{span}\{ \ChSpin_\class : \class \in \SP \} 
 \end{equation}
as the linear span of spin characters \cite[Section 6]{Ivanov2004}.
Ivanov proved  
that the elements of $\Gamma$ can be identified with 
\emph{supersymmetric polynomials}, thus $\Gamma$ is a unital, commutative algebra. 

\subsubsection{Double of a function. Kerov--Olshanski algebra: linear vs spin} 

If $F\colon \Part\to \Q$ is a function on the
set of partitions, we define its \emph{double} as the function $\double F \colon \SP \to \Q$ on the
set of \emph{strict} partitions
\[ \left( \double F \right)(\xi) := F\big( D(\xi) \big) \qquad \text{for }\xi\in\SP\]
given by doubling of the argument.

\medskip

The following simple result reduces some questions about the spin
Kerov--Olshanski algebra $\Gamma$ to its linear counterpart $\KO$.
\begin{theorem}
	\label{prop:isomorphism}
	The map 
	\begin{equation}
	\label{eq:isomorphism}
	\double \colon \KO/\ker \double \to \Gamma
	\end{equation}
	is an isomorphism of algebras.
\end{theorem}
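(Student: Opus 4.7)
The plan is to verify four things: (a) the map $\double$ respects pointwise multiplication, so that the induced quotient map is a unital algebra homomorphism; (b) the image of $\double$ lies inside $\Gamma$; (c) this image is all of $\Gamma$; and (d) the induced map on $\KO / \ker \double$ is injective. Item (a) is a one-line calculation since both $\KO$ and $\Gamma$ are algebras under pointwise multiplication:
\[ \double(FG)(\xi) = F\big(D(\xi)\big)\, G\big(D(\xi)\big) = (\double F)(\xi)\,(\double G)(\xi), \]
and the constant function $1 \in \KO$ is sent to the constant function $1 \in \Gamma$. Item (d) is automatic from the quotient construction.

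The substantive content is therefore reduced to (b) and (c). Since $\KO$ is spanned linearly by $\{\Ch_\pi\}_{\pi \in \Part}$ and $\Gamma$ is spanned by the spin characters $\{\ChSpin_\sigma\}$, by linearity both inclusions amount to expressing each generator on one side as a finite linear combination of generators on the other side. This is precisely the content of the explicit linear-versus-spin character identities announced in \cref{sec:spin-in-terms-of-linear}; the simplest instance, \cref{thm:main-special}, already illustrates both directions at once via the identity $\double \Ch_k = 2\, \ChSpin_k$ for odd $k$. The general formulas of \cref{sec:spin-in-terms-of-linear} provide the full two-way dictionary, and so establish both inclusions simultaneously.

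The main obstacle in this approach is therefore not the isomorphism statement itself but the availability of an explicit two-way dictionary between the doubled linear characters $\{\double \Ch_\pi\}_{\pi \in \Part}$ and the spin characters $\{\ChSpin_\sigma\}$. Once those formulas are in place, the proof of \cref{prop:isomorphism} reduces to the formal verification described above. Note in passing that the quotient by $\ker \double$ is genuinely necessary: $\{D(\xi) : \xi \in \SP\} \subsetneq \Part$ is a proper subset of the domain of $\KO$, so many nonzero elements of $\KO$ vanish identically after doubling, which also explains why no such quotient is needed on the $\Gamma$-side.
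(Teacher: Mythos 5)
There is a genuine gap in the step $\double(\KO)\subseteq\Gamma$. You propose to prove it by expressing each $\double\Ch_\pi$ in terms of spin characters via the identities of \cref{sec:spin-in-terms-of-linear}, but \cref{thm:general_characters} only applies to $\pi\in\OP$. For a partition with an even part---already for $\pi=(2)$, where $\double\Ch_2$ is nonzero, e.g.\ $\Ch_2\bigl(D((1))\bigr)=\Ch_2\bigl((2)\bigr)=2$---no formula from that section applies. This cannot be repaired by linearity or by passing to products: the odd characters $\{\Ch_k : k\text{ odd}\}$ neither span $\KO$ nor generate it as an algebra, since the $\Ch_k$ are algebraically independent. (Your phrase ``finite linear combination'' is also imprecise, since the Section~3 identities are polynomial, not linear; this is harmless because both $\KO$ and $\Gamma$ are algebras, but it is a symptom of the dictionary not having been checked.)

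The paper takes a different route for this inclusion. Instead of using the spanning set $\{\Ch_\pi\}$, it uses a known algebraic basis $S_2,S_3,\dots$ of $\KO$, computes $\double S_k$ explicitly as a symmetric function in the row lengths $\xi_i$, observes that the resulting polynomial in each $\xi_i$ is odd, hence $\double S_k$ is a supersymmetric function, and invokes Ivanov's characterization of $\Gamma$ as the algebra of supersymmetric functions. Your remaining points are fine: the homomorphism property and injectivity of the induced map on the quotient are indeed immediate, and surjectivity does follow from \cref{theo:spin-in-linear} as you say (the paper obtains it as a byproduct of the sharper inclusion $\G_k\subseteq\double(\F_k)$ established inside the proof of \cref{thm:filtration}).
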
 
The proof is postponed to \cref{sec:proof-of-isomorphism}.

\subsubsection{Filtrations: linear vs spin}
\label{sec:filtrations}

In applications it is often convenient to equip Kerov--Olshanski algebra with
this or another filtration which is tailored for the specific asymptotic regime
one is interested in; several such filtrations were considered in the
literature. In order to stay focused we will concentrate on a specific choice of
such a filtration $\F_0\subseteq \F_1 \subseteq \cdots \subseteq \KO$ given by
\begin{equation} 
\label{eq:filtration}
\F_k := \mathrm{span}\{ \Ch_\class : \class \in \Part,\quad  |\class|+\ell(\class)\leq k \},
\end{equation}
where $\ell(\class)=\ell$ denotes the number of parts of
$\class=(\class_1,\dots,\class_\ell)$. 
This specific choice is motivated by investigation of asymptotics of (random)
Young diagrams and tableaux in the scaling in which they grow to infinity in
such a way that they remain \emph{balanced} \cite{Biane1998,Sniady2006}.

\medskip

We define its spin counterpart as the family of vector spaces
$\G_0\subseteq \G_1 \subseteq \cdots \subseteq \Gamma$ defined by
\begin{equation} 
\label{eq:filtration2}
\G_k := \mathrm{span}\{ \ChSpin_\class : \class \in \OP,\quad  |\class|+\ell(\class)\leq k \}.
\end{equation}
Note that $|\class|+\ell(\class)$ is always an even integer for any $\class \in
\OP$; it follows therefore that $\G_{2k+1}=\G_{2k}$ holds for any integer $k\geq 0$.
Informally speaking, this means that only the \emph{`even'} part of the family
$\G_0\subseteq \G_1 \subseteq \cdots \subseteq \Gamma$ contains interesting
information.

\medskip

The following result provides a direct link between the families $(\F_i)$ and
$(\G_i)$ via the isomorphism \eqref{eq:isomorphism}. Also its proof (postponed
to \cref{sec:proof-of-filtration}) makes use of this isomorphism.
\begin{theorem}
	\label{thm:filtration}
	The family $(\G_k)$ is a filtration on the algebra $\Gamma$.
	
	Furthermore, $\G_k = \double(\F_k)$ for any integer $k\geq 0$.
\end{theorem}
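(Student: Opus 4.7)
The plan is to establish the stronger statement $\G_k = \double(\F_k)$ first; the filtration property of $(\G_k)$ then follows formally. Indeed, since \cref{prop:isomorphism} asserts that $\double$ is a surjective algebra homomorphism, and since $(\F_k)$ is known to be a filtration on the linear Kerov--Olshanski algebra $\KO$, one obtains
\begin{equation*}
 \G_k \cdot \G_\ell = \double(\F_k) \cdot \double(\F_\ell) = \double(\F_k \cdot \F_\ell) \subseteq \double(\F_{k+\ell}) = \G_{k+\ell},
\end{equation*}
which is precisely the filtration property in $\Gamma$. So the whole content of the theorem reduces to the vector space identity $\G_k = \double(\F_k)$.

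For the inclusion $\double(\F_k) \subseteq \G_k$, the natural tool is a general character conversion formula along the lines of \cref{sec:spin-in-terms-of-linear}, of which \cref{thm:main-special} is the simplest instance: the baseline identity $\double(\Ch_{(k)}) = 2\, \ChSpin_{(k)}$ for odd $k$ preserves the grading exactly, since $|\class|+\ell(\class) = k+1$ on both sides. In general, one expects $\double(\Ch_\class)$ to be a linear combination of spin characters $\ChSpin_{\class'}$ with $\class' \in \OP$ and $|\class'|+\ell(\class') \leq |\class|+\ell(\class)$; verifying this bound term-by-term (and handling the padding of $\class$ by $1$-cycles, which contributes equally to $|\class|$ and $\ell(\class)$) would yield the forward inclusion. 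The opposite inclusion $\G_k \subseteq \double(\F_k)$ requires inverting the conversion: each $\ChSpin_\class$ with $|\class|+\ell(\class) \leq k$ should be writable as $\double(F)$ for some $F \in \F_k$. The leading-order triangularity already visible in \cref{thm:main-special} makes such an inversion plausible, and induction on $|\class|+\ell(\class)$ reduces the problem to expressing lower-order correction terms via elements of $\F_k$.

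The main obstacle is the combinatorial bookkeeping needed to verify triangularity in both conversion directions with respect to the grading $|\cdot|+\ell(\cdot)$. Manipulations of products of transpositions in the spin group algebra $\SGA{n}$ must be tracked carefully, with special attention to signs and to the normalization factor $2^{(k-\ell(\class))/2}$ built into \eqref{eq:projective-normalized}; in particular one has to confirm that every cycle rearrangement or merging that arises in the passage between a $\Sym{n}$-computation on $D(\xi)$ and a $\Spin{n}$-computation on $\xi$ leaves the quantity $|\class|+\ell(\class)$ non-increasing. Once these estimates are in place, both inclusions follow and \cref{thm:filtration} is established.
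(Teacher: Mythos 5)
Your opening move---derive the filtration property of $(\G_k)$ formally from the vector-space identity $\G_k=\double(\F_k)$, using \cref{prop:isomorphism} and the known filtration property of $(\F_k)$---is clean and, given the identity, correct; the paper instead reads the filtration property off \cref{prop:filtration-and-degree-spin}. Your plan for the reverse inclusion $\G_k\subseteq\double(\F_k)$ (invert the conversion formula) is also exactly what the paper does, via \cref{theo:spin-in-linear}, which produces an explicit preimage $x_\pi\in\F_k$ with $\double(x_\pi)=\ChSpin_\pi$.

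The real gap is in the forward inclusion $\double(\F_k)\subseteq\G_k$, and it is a circularity rather than mere bookkeeping. \cref{thm:general_characters} expresses $\double(\Ch_\class)$ (and only for $\class\in\OP$) not as a linear combination of $\ChSpin_{\class'}$ but as a sum of \emph{products} $\ChSpin_{\class(\sett)}\ChSpin_{\class(\sett^c)}$. To place such a product in $\G_{|\class|+\ell(\class)}$ you would already need $(\G_k)$ to be a filtration---which in your scheme is supposed to be a \emph{consequence} of $\G_k=\double(\F_k)$. Your stronger expectation that $\double(\Ch_\class)$ is a linear combination of $\ChSpin_{\class'}$ with $|\class'|+\ell(\class')\leq|\class|+\ell(\class)$ is precisely a restatement of the inclusion to be proved; the conversion formula does not hand it to you. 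Moreover $\F_k$ is spanned by $\Ch_\class$ over \emph{all} $\class$ with $|\class|+\ell(\class)\leq k$, not just odd ones, and \cref{thm:general_characters} says nothing about $\double(\Ch_\class)$ for $\class\notin\OP$. The paper breaks the circle with an independent, degree-based characterization of both filtrations: \cref{prop:filtration-and-degree-linear} and \cref{prop:filtration-and-degree-spin} identify $\F_k$ (resp.\ $\G_k$) with the space of $F$ whose Stanley polynomial $F(P\times Q)$ (resp.\ $F(\mathbf{P}\stimes\mathbf{Q})$) has degree at most $k$; these are proved from the Stanley character formulas and a degree analysis in multirectangular coordinates. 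Given them, $\double(\F_k)\subseteq\G_k$ is immediate because the coordinate change $D(\mathbf{P}\stimes\mathbf{Q})=P\times Q$ of \cref{lem:multi-strict-vs-usual} is of degree~$1$. That degree characterization is the missing idea; without it (or some substitute for it) your sketch cannot be completed.
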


\subsubsection{Application. Gaussian fluctuations of partitions: linear vs spin}

Our motivation for studying the filtration $(\G_i)$ comes from investigation of
random strict partitions related to projective asymptotic representation theory
of the symmetric groups. \cref{thm:filtration} plays a prominent role in our
forthcoming paper \cite{Sniady2019} which is devoted to this topic.

A convenient tool for proving Gaussianity of fluctuations of random partitions
is \emph{approximate factorization property for characters}
\cite{Sniady2006,DolegaSniady2018} which is formulated in the language of
certain \emph{cumulants}. In the case of the \emph{linear} characters of the
symmetric groups this property was known to be true \cite{Sniady2006}.

In the aforementioned forthcoming paper \cite{Sniady2019} we will show how
to reformulate the results of the current paper in a more abstract way which
allows to relate the cumulants for \emph{linear} characters to their \emph{spin}
counterparts. In this way we will show how the approximate factorization
property of \emph{linear} characters of the symmetric groups implies directly
its \emph{spin} counterpart.

\subsection{Structure of the paper}

\cref{sec:spin-characters} summarizes some fine issues related to the definition of the spin characters.

In \cref{sec:spin-in-terms-of-linear} we prove our starting tool,
\cref{thm:general_characters}, which expresses the linear characters $(\Ch_\pi)$
in terms of the spin characters $(\ChSpin_\pi)$. As a consequence, we also prove
\cref{theo:spin-in-linear} which gives roughly the opposite: the spin characters
$(\ChSpin_\pi)$ in terms of the linear characters $(\Ch_\pi)$.

\cref{sec:proof-of-stanley} is devoted to the proof of \cref{thm:spin-Stanley},
i.e.~spin Stanley character formula.
\cref{sec:proof-of-maps} is devoted to the proof of \cref{thm:maps}, i.e.~spin Stanley formula in terms of non-oriented, orientable maps.

In \cref{sec:multirectangular} we recall the shifted version of
\emph{multirectangular coordinates} and in \cref{sec:asymptotic-stanley} we
apply these coordinates for asymptotics of spin characters. In
\cref{sec:proof-of-isomorphism} we apply these results in order to prove
\cref{prop:isomorphism,thm:filtration} which provide the link between (the
filtration on) the linear Kerov--Olshanski algebra and its spin counterpart. 

\medskip

This article is the full version of a $12$-page \emph{extended abstract}
\cite{MatsumotoSniadyFPSAC} which will be published in the proceedings of the conference
\emph{Formal Power Series and Algebraic Combinatorics 2019}. 

\section{Spin characters, spin representations}
\label{sec:spin-characters}

In order to keep the Introduction lightweight we decided to postpone the discussion of some subtle technical issues related to the definition of spin characters 
and spin representations until the current section.
Our presentation is based on 
\cite{Stembridge1989,Wan2012,Kleshchev2005,Ivanov2004}.

\subsection{Conjugacy classes of $\Spin{n}$}

We denote by $\SP_n^+$ (respectively, $\SP_n^-$) the set of 
strict partitions $\xi\in\SP_n$ with the property that
$n-\ell(\xi)$ is even (respectively, odd).

\medskip

For a partition $\pi\vdash n$ we denote by $C_\pi\subset \Spin{n}$ the set of elements of the spin group which are mapped
--- under the canonical homomorphism $\Spin{n}\to\Sym{n}$ ---
to permutations with the cycle-type given by $\pi$.

\medskip

Schur \cite{Schur1911} 
proved the following dichotomy for $\pi\vdash n$:
\begin{itemize}
	\item if one of the following two conditions is fulfilled:
	\begin{itemize}[label=\ding{212}]
		\item $\pi\in \OP_n$, or
		\item $\pi\in \SP_n^-$
	\end{itemize}
	then $C_\pi$ splits into a pair of conjugacy classes of $\Spin{n}$ which will be denoted by $C_\pi^\pm$;
	
	\item otherwise, $C_\pi$ is a conjugacy class of $\Spin{n}$. 
	
\end{itemize}

\subsection{Conjugacy classes and spin characters}

Any spin character 
vanishes on the conjugacy class $C_\pi$ which does not split, cf.~\cite[p.~95]{Stembridge1989}.
For this reason, \emph{from the viewpoint of the spin character theory only the conjugacy classes $C_\pi^\pm$ are interesting}.

\medskip

Spin representations are exactly the ones which map the central element $z\in\Spin{n}$ to $-\operatorname{Id}\in\GL(V)$.
Since $C_\pi^- = z C_\pi^+$, it follows that
the value of any spin character on $C_\pi^-$ is the opposite of its value on $C_\pi^+$. For this reason, 
\emph{from the viewpoint of the spin character theory the conjugacy classes $C_\pi^-$ are redundant and it is enough to consider the character values only on the conjugacy classes $C_\pi^+$}.

\medskip

From the viewpoint of the asymptotic representation theory it is natural to consider some \emph{sequence} of groups together  with some natural inclusions; in our case this is the sequence 
\[ \Spin{1} \subset \Spin{2} \subset \Spin{3} \subset \cdots \]
of spin groups. Such a setup allows to relate a conjugacy class of a smaller group to some conjugacy class in the bigger group and, in this way, to evaluate the irreducible characters of the bigger group on the conjugacy classes of the smaller one.

Regretfully, the conjugacy classes $C_\pi^+$ which correspond to $\pi\in\SP_n^-$ do not behave nicely under such inclusions. 
Indeed, on the level of the symmetric groups the inclusion $\Sym{n}\subset \Sym{n+k}$ corresponds to adding $k$ fixpoints to a given permutation; in other words the set $C_\pi\subset \Spin{n}$ corresponds to $C_{\pi,1^k}\subset \Spin{n+k}$ and the latter does not split because $(\pi,1^k)\notin \SP_n$ (at least for $k\geq 2$) and $(\pi,1^k)\notin \OP_n$ (because $n-\ell(\pi)$ is odd which implies that at least one part of $\pi$ is even).

For this reason, \emph{for the purposes of the asymptotic representation theory it is enough to consider only the conjugacy classes $C_\pi^+$ for $\pi\in\OP_n$.}

\subsection{Irreducible spin representations}
\label{sec:irreducible-spin-representations}

The relationship between strict partitions and the irreducible spin
representations of the symmetric groups is \emph{not} a bijective one.
Nevertheless, as we shall discuss below, this non-bijectivity can be ignored to
large extent.

\medskip

More specifically (see \cite[p.~235]{Schur1911} and \cite[Theorem
7.1]{Stembridge1989}), each $\xi\in\SP_n^+$ corresponds to a \emph{single}
irreducible representation.
We denote by its character by $\phi^{\xi}$.

\medskip

On the other hand, each $\xi\in\SP_n^-$ corresponds to a \emph{pair} of
irreducible spin representations; we denote their characters by $\phi^{\xi}_+$
and $\phi^{\xi}_-$. These two characters coincide on the conjugacy classes
$C^{\pm}_{\pi}$ over $\pi\in\OP_n$. For the purposes of the current paper we not
need to evaluate the characters on $C^{\pm}_{\pi}$ for $\pi\in \SP_n^-$; for
this reason we do not have to distinguish between them and we may denote them by
the same symbol $\phi^\xi$.

\subsection{Spin characters: conclusion}
\label{appendix:conclusion}

For all partitions $\xi\in\SP_n$, $\pi\in\OP_n$ the
value of the projective character
\[ \phi^\xi(\pi)= \Tr \irrepSp^\xi(c^\pi) \]
is well defined, where $c^\pi\in C_\pi^+$ is a representative of the of the conjugacy class $C_\pi^+$, cf.~\cite[Eq.~(2.1)]{Stembridge1989}.

\subsection{Characters $\XX{\xi}{\pi}$}

Following Ivanov \cite[Section 2]{Ivanov2004}, given $\xi\in\SP_n$ we define a function on $\OP_n$
\[ \tilde{\phi}^\xi=
\begin{cases}
\phi^\xi & \text{if } \xi \in \SP_n^+, \\
\frac{\phi^\xi_+ + \phi^\xi_-}{\sqrt{2}}= 
\sqrt{2}\ \phi^\xi & \text{if } \xi \in \SP_n^-. 
\end{cases}
\]
In the following it will be more convenient to pass to quantities
\[\XX{\xi}{\pi}:=2^{\frac{\ell(\xi)-\ell(\pi)}{2}}\ \PHI{\xi}{\pi},\]
cf.~\cite[Proposition 3.3]{Ivanov2004}. 
Alternatively, we can define the $X^\xi(\pi)$ by the formula
\[
p_\pi = \sum_{\xi \in \SP_n} X^\xi(\pi) P_\xi,
\]
where $p_\pi$ is Newton's powers-sums and $P_\xi$ is the Schur's $P$-function,
as in \cite[Example III.8.11(c)]{Macdonald1995}.

With these notations
\eqref{eq:projective-normalized} can be rewritten as
\[
\ChSpin_\class(\xi):=\begin{cases}
n^{\downarrow k}\ 
\ \frac{ \XX{\xi}{\class\cup 1^{n-k}}}{\XX{\xi}{1^{n}}} = 
n^{\downarrow k}\ 2^{\frac{k-\ell(\class)}{2}}
\ \frac{ \PHI{\xi}{\class\cup 1^{n-k}}}{ \PHI{\xi}{1^{n}}}
& \text{if } n\geq k, \\
0 & \text{otherwise.}
\end{cases}
\]

\section{Spin characters in terms of linear characters}
\label{sec:spin-in-terms-of-linear}

\subsection{The main result 1} 
We were not able to find this result in the
literature and we believe it is new. Note, however, that this result is
essentially only a reformulation of the classical equality
\eqref{eq:Schur_SchurQ} which gives a relation between Schur polynomial
$s_{D(\xi)}$ and Schur's $Q$-function $Q_\xi$.

\begin{theorem}[The main result 1: linear in terms of spin] 
\label{thm:general_characters}
	For any $\class\in\OP$ and $\xi\in\SP$,
	\begin{equation}
	\label{eq:spin-vs-linear}
	\Ch_\class\big( D(\xi)\big) = 
	\sum_{\sett \subseteq [\ell(\class)]} 
	\ChSpin_{\class(\sett)}(\xi)\  \ChSpin_{\class(\sett^c)}(\xi)
	\end{equation}
	where $\class(\sett)=(\class_{i_1}, \class_{i_2},\dots, \class_{i_r})$ for
$\sett=\{i_1<i_2< \cdots< i_r\}$ and $\sett^c=[\ell(\class)]\setminus \sett$
denotes the complement of $\sett$.
\end{theorem}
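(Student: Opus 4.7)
\bigskip

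\noindent\textbf{Proof proposal.} The plan is to derive the identity directly from the classical relation
\[
s_{D(\xi)} = P_\xi\, Q_\xi
\]
between the Schur function of the double shape and the Schur $P$- and $Q$-functions (this is the formula the introduction calls \eqref{eq:Schur_SchurQ}, cf.\ Macdonald, Example I.5.7(a) together with III.8). Everything else is bookkeeping via the Hopf-algebraic structure of the ring of symmetric functions.

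First I would set $n=|\xi|$, $k=|\pi|$, and unwind the definitions on the left-hand side:
\[
\Ch_\pi\bigl(D(\xi)\bigr)=(2n)^{\downarrow k}\,\frac{\chi^{D(\xi)}_{\pi\cup 1^{2n-k}}}{\chi^{D(\xi)}_{1^{2n}}},
\qquad
\chi^{D(\xi)}_\sigma=\langle p_\sigma, s_{D(\xi)}\rangle,
\]
where $\langle\cdot,\cdot\rangle$ is the Hall inner product. Substituting $s_{D(\xi)}=P_\xi Q_\xi$ and using the adjunction $\langle fg,h\rangle=\langle f\otimes g,\Delta h\rangle$ together with the fact that each $p_j$ is primitive, one gets
\[
\Delta(p_{\pi\cup 1^{2n-k}})=\sum_{S\subseteq[\ell(\pi)]}\sum_{j}\binom{2n-k}{j}\,p_{\pi(S)\cup 1^{j}}\otimes p_{\pi(S^c)\cup 1^{2n-k-j}}.
\]
Since $P_\xi,Q_\xi$ are homogeneous of degree $n$, only the term with $j=n-|\pi(S)|$ survives in each summand.

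Next I invoke the duality $\langle P_\eta,Q_\zeta\rangle=\delta_{\eta\zeta}$ together with the expansion $p_\sigma=\sum_{\eta\in\SP}X^\eta(\sigma)P_\eta$, valid for $\sigma\in\OP$ (see the rewriting of $\ChSpin$ in \cref{appendix:conclusion}). This yields
\[
\langle p_\sigma, Q_\xi\rangle = X^\xi(\sigma),\qquad \langle p_\sigma, P_\xi\rangle = 2^{-\ell(\xi)}\,X^\xi(\sigma).
\]
The hypothesis $\pi\in\OP$ guarantees that each sub-partition $\pi(S)\cup 1^{n-|\pi(S)|}$ is odd, so these formulas apply. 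Plugging everything in, the factor $2^{-\ell(\xi)}$ cancels between numerator and denominator when we normalize by $\chi^{D(\xi)}_{1^{2n}}=\binom{2n}{n}\,2^{-\ell(\xi)}\,X^\xi(1^n)^2$.

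Finally the combinatorial identity
\[
\frac{(2n)^{\downarrow k}}{\binom{2n}{n}}\,\binom{2n-k}{n-|\pi(S)|}= n^{\downarrow |\pi(S)|}\,n^{\downarrow |\pi(S^c)|}
\]
splits the overall prefactor exactly the way needed to match the definition of $\ChSpin$, and the right-hand side of \eqref{eq:spin-vs-linear} emerges term by term over $S\subseteq[\ell(\pi)]$. The only delicate step is the bookkeeping of the $1$-padding: one must verify that the $j$ summation collapses to a single term (forced by homogeneity) and that the binomial factor telescopes with the falling factorial and $\binom{2n}{n}$ in exactly the right way. Once that is done the identity is immediate, so I expect no genuine obstacle beyond careful index-keeping.
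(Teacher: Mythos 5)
Your plan is close in spirit to the paper's proof (both ultimately hinge on the classical Schur / Schur-$Q$ relation and on Frobenius-type power-sum expansions; the Hopf-coproduct adjunction you invoke is essentially a repackaging of the paper's coefficient comparison in the $p_\pi$ basis), but as written it rests on two false identities that happen to compensate.

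The first is the starting formula $s_{D(\xi)}=P_\xi Q_\xi$. This is not what the paper's \eqref{eq:Schur_SchurQ} says: the correct statement is $\varphi\bigl(s_{D(\xi)}\bigr)=2^{-\ell(\xi)}Q_\xi^2=P_\xi Q_\xi$, where $\varphi$ is the algebra homomorphism $\varphi(p_r)=2p_r$ for $r$ odd, $\varphi(p_r)=0$ for $r$ even. For instance, with $\xi=(1)$ one has $s_{D(\xi)}=s_{(1,1)}=\tfrac{1}{2}(p_1^2-p_2)$ while $P_{(1)}Q_{(1)}=2p_1^2$, so the two disagree; applying $\varphi$ to the left-hand side recovers $2p_1^2$. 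The second problematic step is the claim $\langle p_\sigma,Q_\xi\rangle=X^\xi(\sigma)$. That formula holds for the $t=-1$ deformation of the inner product, under which $\langle P_\eta,Q_\zeta\rangle_{-1}=\delta_{\eta\zeta}$. It does \emph{not} hold for the Hall inner product, which is the one in which $\chi^\mu(\sigma)=\langle p_\sigma,s_\mu\rangle$ and in which the Hopf adjunction you use is valid; under the Hall pairing one gets $\langle p_\sigma,Q_\xi\rangle=2^{\ell(\sigma)}X^\xi(\sigma)$, as the paper's Frobenius formula $Q_\xi=\sum_\nu 2^{\ell(\nu)}z_\nu^{-1}X^\xi(\nu)p_\nu$ makes explicit. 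You cannot switch pairings mid-argument.

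Because $\varphi$ acts on each $p_\sigma$ with $\sigma\in\OP$ by multiplication by $2^{\ell(\sigma)}$, these two $2^{\ell(\sigma)}$ discrepancies cancel exactly, so your final bookkeeping still lands on \eqref{eq:spin-vs-linear}. But the intermediate assertions are individually wrong, and a reader filling in the details from only one of them would get an answer off by a power of $2$. The fix is to insert $\varphi$ throughout: replace $s_{D(\xi)}=P_\xi Q_\xi$ by $\varphi(s_{D(\xi)})=P_\xi Q_\xi$, note that this gives $\langle p_\sigma,\varphi(s_{D(\xi)})\rangle=2^{\ell(\sigma)}\chi^{D(\xi)}(\sigma)$, and use the correct Hall-pairing values $\langle p_\sigma,Q_\xi\rangle=2^{\ell(\sigma)}X^\xi(\sigma)$ and $\langle p_\sigma,P_\xi\rangle=2^{\ell(\sigma)-\ell(\xi)}X^\xi(\sigma)$; the powers of $2$ then collapse correctly because $\ell(\sigma')+\ell(\sigma'')=\ell(\sigma)$ in the coproduct decomposition. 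After that repair, your approach is a valid and slightly more structural rephrasing of the paper's argument: the coproduct of the power sums does exactly the $1$-padding bookkeeping that the paper handles by comparing coefficients of $p_{\pi\cup 1^{2n-|\pi|}}$ and then reducing the general case to $m_1(\pi)=0$.
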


The proof is presented below in \cref{sec:proof}.

\medskip

Each  summand on the right-hand side of \eqref{eq:spin-vs-linear} is equal to
another summand in which the roles of the set $\sett$ and its complement
$\sett^c$ are interchanged. By grouping such pairs of summands (each
such a pair corresponds to a set-partition of $[\ell(\pi)]$ to \emph{at most}
two blocks) and by dividing both sides of \eqref{eq:spin-vs-linear} by $2$, the
above result can be reformulated as the following equality between functions on
$\SP$:
\begin{equation}
\label{eq:spin-vs-linear2} \ChD_\class = \sum_{\substack{ \partition: \\
		|\partition|\leq 2} } \prod_{b\in \partition} \ChSpin_{(\class_i : i \in b)},
\end{equation}
where the sum runs over set-partitions $\partition$ of the set $[\ell(\class)]$ into
at most two blocks, and where we denote
\[ \ChD_\class(\xi):=\frac{1}{2} \Ch_\class\big( D(\xi) \big)\]
for $\xi\in\SP$ and $\class\in\OP$.

\begin{example}
	\cref{thm:general_characters} implies the following equalities between
functions on the set of strict partitions:
\begin{equation}
\label{eq:example-chd}
\left\{
\begin{aligned}
\ChD_{k_1} &=  
\ChSpin_{k_1}, \\
\ChD_{k_1,k_2} &=  \ChSpin_{k_1,k_2} +
\ChSpin_{k_1}\ 
\ChSpin_{k_2}, \\
\ChD_{k_1,k_2,k_3} &=  \ChSpin_{k_1,k_2,k_3} + \\ &+
\ChSpin_{k_1,k_2}\
\ChSpin_{k_3} + 
\ChSpin_{k_1,k_3}\
\ChSpin_{k_2} 
+
\ChSpin_{k_2,k_3}\ 
\ChSpin_{k_1},   \\
\vdots
\end{aligned} \right.
\end{equation}
for arbitrary odd integers $k_1,k_2,\ldots\geq 1$.
\end{example}

\subsection{Proof of \cref{thm:general_characters}}
\label{sec:proof}

\begin{proof}[Proof of \cref{thm:general_characters}]
For an integer partition $\class$ we denote
\[ z_\class = \prod_{j\geq 1} j^{m_j(\class)} m_j(\class)!, \]
where $m_j(\class)$ denotes the \emph{multiplicity} of $j$ in the partition
$\class$.  

We denote by $f^\lambda=\CHIL{\lambda}{1^{|\lambda|}}$ the number of
standard tableaux of shape $\lambda$.
For a strict partition $\xi$ we denote 
\[g^\xi= \XX{\xi}{1^{|\xi|}} \] 
which also happens to be the number
of \emph{shifted} standard tableaux with the shape given by the shifted Young
diagram $\xi$, see \cite[III-8, Ex.~12]{Macdonald1995}.

\medskip
	
Recall the symmetric function algebra $\Lambda=\Q[p_1,p_2,p_3,\dots]$ and its
subalgebra, \emph{the algebra of supersymmetric functions}
$\Gamma=\Q[p_1,p_3,p_5,\dots]$, where the $p_r$ are Newton's power-sums.
Define the algebra homomorphism $\varphi:\Lambda \to \Gamma$ by
\[
\varphi(p_r)= \begin{cases}
2 p_r & \text{if $r$ is odd}, \\
0 & \text{if $r$ is even}.
\end{cases}
\]
Then \cite[III-8, Ex.~10]{Macdonald1995} implies that for any strict partition
$\xi$ we have
\begin{equation} 
\label{eq:Schur_SchurQ}
\varphi(s_{D(\xi)})= 2^{-\ell(\xi)}\ (Q_\xi)^2,
\end{equation}
where $Q_\zeta=Q_\zeta(x; -1)$ denotes Schur's $Q$-function
\cite[III-8]{Macdonald1995}.

Recall the Frobenius formula for Schur functions:
\[
s_{\mu}= \sum_{\class} z_\class^{-1}\ \CHIL{\mu}{\class}\ p_{\class}.
\]
Applying the homomorphism $\varphi$ to this identity with
$\mu=D(\xi)$, we obtain
\[
\varphi(s_{D(\xi)})= \sum_{\class \in \OP_{2n}} 2^{\ell(\class)}
z_\class^{-1}\ \CHIL{D(\xi)}{\class}\ p_\class.
\]
And, recall the Frobenius formula for Schur $Q$-functions:
\[
Q_\xi = \sum_{\nu \in \OP_n} 
2^{\ell(\nu)} z_\nu^{-1}\ \XX{\xi}{\nu}\ p_\nu.
\]
Substituting the above two formulas to \eqref{eq:Schur_SchurQ},
we have for any $\xi\in\SP_n$
\begin{equation} \label{eq:Schur_SchurQ2}
\sum_{\class \in \OP_{2n}} 
2^{\ell(\class)}
z_\class^{-1}\ \CHIL{D(\xi)}{\class}\ p_\class
= 2^{-\ell(\xi)} 
\left( \sum_{\nu \in \OP_n} 
2^{\ell(\nu)} z_\nu^{-1}\ \XX{\xi}{\nu}\ p_\nu\right)^2.
\end{equation}
By comparing the coefficients of $p_{(1^{2n})}=p_{(1^{n})}p_{(1^{n})}$ in both
sides of \eqref{eq:Schur_SchurQ2}, we find 
\begin{equation} \label{eq:f_to_g2}
\frac{f^{D(\xi)}}{(2n)!}  = 
2^{-\ell(\xi)} \left(\frac{g^\xi}{n!}\right)^2.
\end{equation}
For an alternative proof of this identity see \cite[Proposition 3.1]{Linusson2018}.

\bigskip

\emph{First we assume that $\class$ is an odd partition
which does not have parts equal to $1$}, i.e., 
$m_1(\class)=0$.
By comparing the coefficients of $p_{\class \cup (1^{2n-|\class|})}$ in both
sides of \eqref{eq:Schur_SchurQ2} we find
\begin{multline*}
\frac{\CHIL{D(\xi)}{\class\cup (1^{n-|\class|})}}{z_{\class\cup
		(1^{n-|\class|})}} = \\  
	2^{-\ell(\xi)} \sum_{\substack{ \mu^1, \mu^2 \\ \mu^1
		\cup \mu^2 =\class}} \frac{ \XX{\xi}{\mu^1 \cup (1^{n-|\mu^1|})}}{z_{\mu^1
		\cup (1^{n-|\mu^1|})}} \frac{ 
	\XX{\xi}{\mu^2 \cup (1^{n-|\mu^2|})}}{z_{\mu^1
		\cup (1^{n-|\mu^2|})}}.
\end{multline*}
By the assumption $m_1(\class)=0$, we have
$z_{\class \cup (1^{2n-|\class|})}= z_\class \cdot (2n-|\class|)!$
and  $z_{\mu^i \cup (1^{n-|\mu^i|})}= z_{\mu^i} \cdot (n-|\mu^i|)!$.
Thus, we obtain
\begin{multline*}
\frac{\CHIL{D(\xi)}{\class\cup (1^{n-|\class|})}}{z_\class \cdot (2n-|\class|)!}
= \\ 2^{-\ell(\xi)} \sum_{\substack{ \mu^1, \mu^2  \\
		\mu^1 \cup \mu^2 =\class}}
\frac{ \XX{\xi}{\mu^1 \cup (1^{n-|\mu^1|})}}{z_{\mu^1} \cdot (n-|\mu^1|)!} 
\frac{ \XX{\xi}{\mu^2 \cup (1^{n-|\mu^2|})}}{z_{\mu^2} \cdot (n-|\mu^2|)!}.
\end{multline*}
Taking the quotient of this and \eqref{eq:f_to_g2}, 
we have
\begin{multline*}
\frac{1}{z_\class} \frac{(2n)!}{(2n-|\class|)!} 
\frac{\CHIL{D(\xi)}{\class \cup(1^{2n-|\class|})}}{f^{D(\xi)}}
=\\ \sum_{\substack{ \mu^1, \mu^2  \\
		\mu^1 \cup \mu^2 =\class}}
\frac{1}{z_{\mu^1} z_{\mu^2}} 
\frac{n!}{(n-|\mu^1|)!} \frac{\XX{\xi}{\mu^1 \cup (1^{n-|\mu^1|})}}{g^\xi} \times \\ \times
\frac{n!}{(n-|\mu^2|)!} \frac{\XX{\xi}{\mu^2 \cup (1^{n-|\mu^2|})}}{g^\xi},
\end{multline*}
which is equivalent to 
\[
\Ch_\class\big(D(\xi)\big)= 
\sum_{\substack{ \mu^1, \mu^2  \\
		\mu^1 \cup \mu^2 =\class}}
\frac{z_\class}{z_{\mu^1} z_{\mu^2}} 
\ChSpin_{\mu^1}(\xi) \ChSpin_{\mu^2}(\xi).
\]
It is easy to see that
this is equivalent to the desired formula.
Thus, we completed the proof of the theorem under the assumption 
$m_1(\class)=0$.

	\bigskip
	
	\emph{Next we consider a general odd partition $\class$} and we
	write it as
	$\class=\tilde{\class} \cup (1^r)$,
	where $m_1(\tilde{\class})=0$ and $r=m_1(\class)$.
	Then 
	\begin{multline}
	\label{eq:to-be-continued}
	\sum_{\sett \subseteq [\ell(\class)] } \ChSpin_{\class(\sett)} (\xi)
	\ChSpin_{\class(\sett^c)}(\xi) \\
	= \sum_{\setJ \subseteq [\ell(\tilde{\class})]}
	\sum_{s=0}^r \binom{r}{s}
	\ChSpin_{\tilde{\class}(\setJ) \cup (1^s)}(\xi)
	\ChSpin_{\tilde{\class}(\setJ^c) \cup (1^{r-s})}(\xi),
	\end{multline}
	where $\setJ^c= [\ell(\tilde{\class})] \setminus \setJ$.
	By virtue of the identity 
	\begin{equation} \label{eq:reductions}
	\ChSpin_{\nu \cup (1^s)}(\xi) = 
	(n-|\nu|)^{\downarrow s}\ \ChSpin_\nu (\xi),
	\end{equation}
	we have 
	\begin{multline*}
	\eqref{eq:to-be-continued} = \sum_{\setJ \subseteq [\ell(\tilde{\class})] }
	\ChSpin_{\tilde{\class}(\setJ)}(\xi)
	\ChSpin_{\tilde{\class}(\setJ^c)}(\xi) \times \\ \times
	\sum_{s=0}^r \frac{r!}{s! (r-s)!} 
	\frac{(n-|\tilde{\class}(\setJ)|)!}{(n-|\tilde{\class}(\setJ)|-s)!}
	\frac{(n-|\tilde{\class}(\setJ^c)|)!}{(n-|\tilde{\class}(\setJ^c)|-(r-s))!}.
	\end{multline*}
	Here it is easy to see that
	\begin{multline*}
	 \sum_{s=0}^r \frac{r!}{s! (r-s)!} 
	\frac{(n-|\tilde{\class}(\setJ)|)!}{(n-|\tilde{\class}(\setJ)|-s)!}
	\frac{(n-|\tilde{\class}(\setJ^c)|)!}{(n-|\tilde{\class}(\setJ^c)|-(r-s))!} \\
	= r! \sum_{s=0}^r \binom{n-|\tilde{\class}(\setJ)|}{s}
	\binom{n-|\tilde{\class}(\setJ^c)|}{r-s} \\
	= r! \binom{2n-|\tilde{\class}(\setJ)| -|\tilde{\class}(\setJ^c)|}{r} 
	= \frac{(2n-|\tilde{\class}|)!}{(2n-|\tilde{\class}|-r)!}. 
	\end{multline*}
	Thus we have obtained 
	\begin{align*}
	\sum_{\sett \subseteq [\ell(\class)]} \ChSpin_{\class(\sett)} (\xi)
	\ChSpin_{\class(\sett^c)}(\xi)
	=& \frac{(2n-|\tilde{\class}|)!}{(2n-|\tilde{\class}|-r)!} 
	\sum_{\sett \subseteq [\ell(\tilde{\class})]}
	\ChSpin_{\tilde{\class}(\setJ)}(\xi)
	\ChSpin_{\tilde{\class}(\setJ^c)}(\xi) \\
	=& \frac{(2n-|\tilde{\class}|)!}{(2n-|\tilde{\class}|-r)!} 
	\Ch_{\tilde{\class}}\big(D(\xi)\big) \\
	=& \Ch_\class \big(D(\xi)\big)
	\end{align*}
	which concludes the proof.
	Here the second equality follows from the previous part of the present proof
	and the third equality follows from \eqref{eq:reductions}.
\end{proof}

\subsection{The main result 2. Spin characters in terms of linear characters}

Formulas \eqref{eq:example-chd} can be viewed as an upper-triangular system of
equations with unknowns $(\ChSpin_{\class})_{\class\in\OP}$. It can be solved, for
example
\begin{equation}
\label{eq:example-chd2}
\left\{
\begin{aligned}
\ChSpin_{k_1} &=  
\ChD_{k_1}, \\[1ex]
\ChSpin_{k_1,k_2} &=  \ChD_{k_1,k_2} - \ChD_{k_1}\ 
\ChD_{k_2}, \\[1ex]
\ChSpin_{k_1,k_2,k_3} &=  \ChD_{k_1,k_2,k_3} \\ & -
\ChD_{k_1,k_2}\
\ChD_{k_3} 
-
\ChD_{k_1,k_3}\
\ChD_{k_2} 
-
\ChD_{k_2,k_3}\ 
\ChD_{k_1} 
\\ &
+ 3 \ChD_{k_1} \ChD_{k_2} \ChD_{k_3},  \\
\vdots
\end{aligned} \right.
\end{equation}

The general pattern is given by the following result.

\begin{theorem}[The main result 2: spin in terms of linear]
	\label{theo:spin-in-linear} For any $\class\in\OP$ the following equality
between functions on the set $\SP$\! of strict partitions holds true:
\begin{equation} 
\label{eq:spin-in-linear}
\ChSpin_\class = \sum_{\partition} (-1)^{|\partition|-1}\ (2|\partition|-3)!!\
\prod_{b\in \partition} \ChD_{(\class_i:i \in b)}, 
\end{equation}
where the sum runs over all set-partitions of the set 
$[\ell(\class)]$ and where we use the convention that $(-1)!!=1$.
\end{theorem}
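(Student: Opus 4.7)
The plan is to recognize \cref{thm:general_characters}, in its reformulation \eqref{eq:spin-vs-linear2}, as a quadratic identity between two generating functions and to invert it via the binomial series for the square root. Fix $\class = (\class_1, \dots, \class_\ell) \in \OP$ with $\ell = \ell(\class)$. For each $S \subseteq [\ell]$ write $c_S := \ChSpin_{\class(S)}$ and $m_S := \ChD_{\class(S)}$, with the convention $c_\emptyset = m_\emptyset = 1$. Applying \eqref{eq:spin-vs-linear2} to the sub-partition $\class(S)$ for every nonempty $S$ yields
\[ m_S \;=\; c_S \;+\; \sum_{\substack{\{A,B\}:\, A \sqcup B = S \\ A, B \neq \emptyset}} c_A\, c_B. \]

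The key step is to encode this infinite family of identities in a single relation. Introduce formal commuting variables $x_1, \dots, x_\ell$ and work inside the ring $R := \Q[x_1, \dots, x_\ell]/(x_1^2, \dots, x_\ell^2)$, whose basis consists of the squarefree monomials $x^S := \prod_{i \in S} x_i$. Define
\[ M \;:=\; \sum_{\emptyset \neq S \subseteq [\ell]} m_S\, x^S, \qquad C \;:=\; \sum_{\emptyset \neq S \subseteq [\ell]} c_S\, x^S \;\in\; R. \]
Since $x_i^2 = 0$ kills all non-squarefree cross-terms, a direct expansion gives
\[ C^2 \;=\; \sum_{U \neq \emptyset} x^U \!\!\sum_{\substack{(A,B)\\ A \sqcup B = U,\ A,B \neq \emptyset}}\!\! c_A c_B \;=\; 2(M - C), \]
equivalently $(1+C)^2 = 1 + 2M$ in $R$.

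Because $M$ lies in the nilpotent augmentation ideal of $R$, we may invert formally:
\[ C \;=\; (1 + 2M)^{1/2} - 1 \;=\; \sum_{k \geq 1} \binom{1/2}{k} 2^k\, M^k \;=\; \sum_{k \geq 1} \frac{(-1)^{k-1}(2k-3)!!}{k!}\, M^k, \]
using the elementary identity $\binom{1/2}{k} 2^k = \frac{(-1)^{k-1}(2k-3)!!}{k!}$ (with $(-1)!! = 1$). The remaining task is to extract the coefficient of the top monomial $x^{[\ell]} = x_1 \cdots x_\ell$ on both sides. On the left this is $c_{[\ell]} = \ChSpin_\class$. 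On the right, $[x^{[\ell]}] M^k$ equals the number of ordered tuples $(S_1, \dots, S_k)$ of pairwise disjoint nonempty subsets with $\bigsqcup_i S_i = [\ell]$ weighted by $m_{S_1} \cdots m_{S_k}$, which is $k! \sum_{\pi :\, |\pi| = k} \prod_{b \in \pi} m_b$. The two $k!$ factors cancel and one recovers exactly the right-hand side of \eqref{eq:spin-in-linear}.

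No step is genuinely hard; the main thing to get right is the squarefree truncation (the identity $(1+C)^2 = 1 + 2M$ is an equality in $R$, not in $\Q[x_1, \dots, x_\ell]$) and the bookkeeping that converts ordered set-partitions (from $M^k$) into unordered ones (in \eqref{eq:spin-in-linear}). A reader who prefers to avoid generating functions can deduce the same formula by induction on $|\class|$, using the recursion $m_S = c_S + \sum c_A c_B$ together with the combinatorial identity $\sum_{k=1}^n \binom{n}{k}(-1)^{k-1}(2k-3)!!(2n-2k-1)!! = (2n-3)!!$, but the square-root argument above is shorter and more transparent.
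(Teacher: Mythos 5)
Your proof is correct and takes a genuinely different route from the paper's. The paper solves the triangular system iteratively: it isolates the one-block term in \eqref{eq:spin-vs-linear2}, expands recursively, encodes the recursion by a tree, and identifies the coefficient $(2|\partition|-3)!!$ as the number of \emph{total binary partitions} with a prescribed set of leaf labels, appealing to an enumeration result of Stanley. You instead package the family of identities for all sub-partitions $\class(S)$ into a single quadratic relation $(1+C)^2=1+2M$ in the squarefree quotient ring $\Q[x_1,\dots,x_\ell]/(x_1^2,\dots,x_\ell^2)$, invert it with the binomial series for $(1+2M)^{1/2}$ (legitimate since $M$ is nilpotent there), and read off the coefficient of $x_1\cdots x_\ell$; the double factorial then drops out automatically from $\binom{1/2}{k}2^k=(-1)^{k-1}(2k-3)!!/k!$, and the $k!$ converting ordered tuples into set-partitions cancels cleanly. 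Both arguments are rigorous. The paper's tree count makes the combinatorial provenance of $(2|\partition|-3)!!$ visible at the cost of an external enumeration fact; your square-root inversion is self-contained and mechanical, at the cost of a less transparent combinatorial interpretation of the coefficients.
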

\begin{proof}
By singling out the partition $\partition$ in \eqref{eq:spin-vs-linear2} which
consists of exactly one block we may express the spin character $\ChSpin_\class$ in
terms of the linear character $\ChD_\class$ and spin characters $\ChSpin_{\class'}$
which correspond to partitions $\class'\in\OP$ with $\ell(\class')<\ell(\class)$:
\begin{equation}
\label{eq:spin-vs-linear3}
\ChSpin_\class= \ChD_{\class} -
\sum_{\substack{ \partition: \\ |\partition|= 2} } \prod_{b\in \partition} \ChSpin_{(\class_i : i \in b)}.
\end{equation}
By applying this procedure recursively to the spin characters on the right-hand
side, we end up with an expression for $\ChSpin_\class$ as a linear combination (with
integer coefficients) of the products of the form
\begin{equation} 
\label{eq:mysummand}
\prod_{b\in \partition} \ChD_{(\class_i : i \in b)} 
\end{equation}
over set-partitions $\partition$ of $[\ell(\class)]$. The remaining difficulty is to
determine the exact value of the coefficient of \eqref{eq:mysummand} in this
linear combination.

\medskip

The above recursive procedure can be encoded by a tree as follows. Each vertex
which is a leaf is labelled by some linear character $\ChD_{(\class_i : i \in
	c)}$ for some non-empty subset $c\subseteq [\ell(\class)]$ or --- to keep the
notation light --- by the set $c$. Each vertex which is not a leaf is labelled
by some spin character $\ChSpin_{(\class_i : i \in c )}$ for some non-empty
subset $c\subseteq [\ell(\class)]$ or --- to keep the notation light --- by the
set $c$.

Each vertex $c\subseteq [\ell(\class)]$ which is not a leaf
has exactly two children $c_1,c_2\subseteq [\ell(\class)]$ which are non-empty,
disjoint sets such that $c=c_1 \sqcup c_2$. 
Thus the labels of all non-leafs are
uniquely determined by the labels of the leafs, so we may remove these non-leaf
labels. Since we are interested in the coefficient of \eqref{eq:mysummand}, we
require that the set of leaf labels is equal to the set of blocks of $\partition$
from \eqref{eq:mysummand}.

The resulting non-ordered trees with the property that each non-leaf vertex has
exactly two children are known as \emph{total binary partitions}; the
cardinality of such trees with the prescribed set $\partition$ of leaf labels is
equal to $(2 |\partition| - 3)!!$, cf.~\cite[Example 5.2.6]{Stanley1999}.

\medskip

Our recursive procedure involves change of the sign; such a change occurs once
for each non-leaf vertex. Thus each total binary tree contributes with
multiplicity $(-1)^{|\partition|-1}$ which concludes the proof.
\end{proof}

\section{Proof of spin Stanley character formula}

\label{sec:proof-of-stanley}

\begin{proof}[Proof of \cref{thm:spin-Stanley}]
We start with \cref{theo:spin-in-linear} and substitute each normalized linear
character 
\[ \ChD_{(\class_i:i \in b)}(\xi)=\frac{1}{2} \Ch_{(\class_i:i \in b)}\big( D(\xi)  \big)\]
which contributes to the right-hand side of
\eqref{eq:spin-in-linear} by the linear Stanley character formula
\eqref{eq:stanley-linear}. 

We shall discuss in detail the case when $\pi=(\pi_1,\pi_2)$ consists of just
two parts. We will view $\Sym{\pi_1}$, $\Sym{\pi_2}$ and $\Sym{\pi_1+\pi_2}$ as
the groups of permutations of, respectively, the set $\{1,\dots,\pi_1\}$,
$\{\pi_1+1,\dots,\pi_1+\pi_2\}$ and $\{1,\dots,\pi_1+\pi_2\}$. In this way we
may identify $\Sym{\pi_1}\times\Sym{\pi_2}$ as a subgroup of
$\Sym{\pi_1+\pi_2}$. As usually, we identify $(\pi_1)\in\Sym{\pi_1}$,
$(\pi_2)\in\Sym{\pi_2}$  with arbitrary permutations with prescribed cycle
structures; then $(\pi_1,\pi_2):=\big( (\pi_1), (\pi_2) \big) \in \Sym{\pi_1}
\times \Sym{\pi_2} \subseteq \Sym{\pi_1+\pi_2}$ is also a permutation with
appropriate cycle structure. With these notations we have
\begin{multline} 
\label{eq:two-parts}
\ChSpin_{(\pi_1,\pi_2)}(\xi) =  \\
\shoveright{ \frac{(-1)!!}{2} \Ch_{(\pi_1,\pi_2)} \big( D(\xi) \big) -
\frac{1!! }{2^2} \Ch_{(\pi_1)} \big( D(\xi) \big) 
\ \Ch_{(\pi_2)} \big( D(\xi) \big) =} \\
\shoveleft{\frac{(-1)!!}{2} 
\sum_{\substack{\sigma_1,\sigma_2\in\Sym{\pi_1+\pi_2} \\
	\sigma_1\sigma_2=(\pi_1,\pi_2)}}
(-1)^{\sigma_1} N_{\sigma_1,\sigma_2}\big( D(\xi) \big)
} 
\\ 
-\frac{1!! }{2^2}
\sum_{\substack{\sigma_1^{(1)},\sigma_2^{(1)} \in\Sym{\pi_1} \\
		\sigma^{(1)}_1\sigma^{(1)}_2=(\pi_1)}}
(-1)^{\sigma_1^{(1)}} N_{\sigma_1^{(1)},\sigma_2^{(1)}}
\big( D(\xi) \big)  \times 
\\
\shoveright{\times 
\sum_{\substack{\sigma_1^{(2)},\sigma_2^{(2)} \in\Sym{\pi_2} \\
		\sigma^{(2)}_1\sigma^{(2)}_2=(\pi_2)}}
(-1)^{\sigma_1^{(2)}} N_{\sigma_1^{(2)},\sigma_2^{(2)}}
\big( D(\xi) \big)=}
\\ 
\shoveleft{\frac{(-1)!!}{2} 
	\sum_{\substack{\sigma_1,\sigma_2\in\Sym{\pi_1+\pi_2} \\
			\sigma_1\sigma_2=(\pi_1,\pi_2)}}
	(-1)^{\sigma_1} N_{\sigma_1,\sigma_2}\big( D(\xi) \big)
	} 
\\ 
-\frac{1!! }{2^2}
\sum_{\substack{\sigma_1,\sigma_2\in\Sym{\pi_1}\times\Sym{\pi_2} 	\\ \sigma_1\sigma_2=(\pi_1,\pi_2)}}
(-1)^{\sigma_1} N_{\sigma_1,\sigma_2}
\big( D(\xi) \big),
\end{multline}
where the last equality follows from identification between the pair
$(\sigma_i^{(1)}, \sigma_i^{(2)})$ with the corresponding
permutation $\sigma_i\in\Sym{\pi_1}\times \Sym{\pi_2}$. 
\medskip

In general, 
\begin{equation} 
\label{eq:Stanley-not-ready}
\ChSpin_{\pi}(\xi) = 
\sum_{\substack{\sigma_1,\sigma_2\in\Sym{|\pi|} \\
		\sigma_1\sigma_2=\pi}} c_{\sigma_1,\sigma_2}\ (-1)^{\sigma_1}\ N_{\sigma_1,\sigma_2}\big(D(\xi)\big)
	\end{equation}
for some combinatorial factor $c_{\sigma_1,\sigma_2}$. For example, in the special
case $\pi=(\pi_1,\pi_2)$ which consists of two parts, \eqref{eq:two-parts}
implies that
\[ c_{\sigma_1,\sigma_2}=\begin{cases}
\frac{(-1)!!}{2} - \frac{1!!}{2^2} & \text{if } \sigma_1,\sigma_2\in \Sym{\pi_1}\times \Sym{\pi_2}, \\[1ex]
\frac{(-1)!!}{2} & \text{otherwise}.
\end{cases}
\]

\medskip

We claim that in the general case the value of the constant $c_{\sigma_1,\sigma_2}$ is equal to  
 \begin{equation}	
 \label{eq:stirling}
 \const_m:= c_{\sigma_1,\sigma_2} = (-1) \sum_p \stirlingS{m}{p} \left(-\frac{1}{2}\right)^p (2p-3)!!, 
 \end{equation}
where $m=|\sigma_1 \vee \sigma_2|$ is the number of orbits 
in the set $[|\pi|]$ under the action of the group $\langle
\sigma_1,\sigma_2\rangle$ generated by $\sigma_1,\sigma_2$,
and $\stirlingS{m}{k}$ denotes Stirling numbers of the second kind. Indeed, the
set-partition $\partition$ (over which we sum in \eqref{eq:spin-in-linear}) can
be identified with a set-partition on the set of the cycles of the permutation
$\pi\in\Sym{|\pi|}$. With this in mind we see that to $c_{\sigma_1,\sigma_2}$
contribute only these set-partitions $\partition$ on the right-hand side of
\eqref{eq:spin-in-linear} for which $\partition$ is bigger than the
set-partition given by the orbits of $\langle \sigma_1,\sigma_2\rangle$. The
collection of such set-partitions can be identified with the collection of
set-partitions of an $m$-element set (i.e.~the set of orbits of $\langle
\sigma_1,\sigma_2\rangle$). For a fixed value $p:=|I|$ of the number of the
blocks there are clearly $\stirlingS{m}{p}$ choices of $I$; each contributes to
$c_{\sigma_1,\sigma_2}$ with the multiplicity
\[ (-1)^{p-1} \ (2p-3)!!\ \frac{1}{2^p}; \]
this concludes the proof of \eqref{eq:stirling}. 

The sum in \eqref{eq:stirling} can be probably calculated by a clever trick
which we, regretfully, failed to find. From our perspective the exact form of
the right-hand side of \eqref{eq:stirling} is not important; in the following we
will make use only of the observation that $c_{\sigma_1,\sigma_2}=\const_m$ depends
only on the number of the orbits of $\langle \sigma_1,\sigma_2\rangle$.

\medskip

In order to evaluate $\const_m$ we shall consider \eqref{eq:Stanley-not-ready} in the
special case of $\pi=1^m$. In this case $\sigma_2=\sigma_1^{-1}$; we denote by
$l=|C(\sigma_1)|$ the number of cycles of $\sigma_1$. It follows that
\[ \ChSpin_{1^m}(\xi)=n^{\downarrow m} = 
\sum_l \stirlingF{m}{l}\ \const_l\ (-1)^{m-l}\ (2n)^{l},
\]
where $n=|\xi|$ and $\stirlingF{m}{l}$ denotes Stirling number of the first
kind. Both sides of the equality are polynomials in the variable $n$; by
comparing the leading coefficients (which corresponds to setting $l=m$) we
conclude that
\[ \const_m = \frac{1}{2^m}.\]
By substituting this value to \eqref{eq:Stanley-not-ready} we conclude the proof.
\end{proof}

\section{Proof of \cref{thm:maps}}
\label{sec:proof-of-maps}

Maps which we consider come in two distinct flavours: \emph{non-oriented maps} versus
\emph{oriented maps}. We review their difference in the following.

\subsection{Non-oriented maps, revisited} 
\label{sec:non-oriented-revisited}

We keep notations from \cref{sec:maps,sec:sum-nonoriented}. The polygons from
the collection $\mathcal{P}$ have labelled edges. These labels can be erased and
we would be still able to recover them if we preserve the following information:
\begin{itemize}
	\item we paint the polygons from the collection $\mathcal{P}$ with distinct colors; 
	
	\item on each polygon from $\mathcal{P}$ we select some white vertex
	(\emph{``root''}); 
	
	\item on each polygon from $\mathcal{P}$ we select one of the two edges
	adjacent to the root (\emph{``direction of rotation\footnote{Even though the
			name \emph{orientation} would sound more appropriate here, we decided to
			reserve the latter word for the context of oriented maps.}''}).
\end{itemize}
The roots and the directions of rotation were indicated on \cref{fig:polygons}
by arrows and harpoons respectively.

\medskip

We recall that the maps which we consider are obtained by gluing the polygons
with \emph{labelled} edges. As a result,  each edge of the map carries two
labels; one on each side of the edge, see \cref{fig:map-projective-plane}. We
can erase these labels and still be able to recover them if we preserve the
following information:
\begin{itemize}
	\item we paint each 
	face of the map with the colour of the corresponding polygon from 
	$\mathcal{P}$ (so that we know the correspondence between the faces of the map and the polygons);
	
	\item on each face of the map we decorate the white corner (\emph{``root''})
	which corresponds to the root of the corresponding polygon from~$\mathcal{P}$;
	
	\item on each face we decorate one of the two edges adjacent to the root which
	corresponds to the direction of rotation of the corresponding polygon from
	$\mathcal{P}$.
\end{itemize}
The roots and the directions of rotation are indicated on
\cref{fig:map-projective-plane} by arrows and harpoons respectively.

\begin{figure}[t]
	\begin{tikzpicture}[scale=0.4,
	black/.style={circle,thick,draw=black,fill=white,inner sep=4pt},
	white/.style={circle,draw=black,fill=black,inner sep=4pt},
	connection/.style={draw=black,black,auto}
	]
	\small
	
	\fill[fill=blue!3]  
	(0*360/14:7) -- (1*360/14:7) -- (2*360/14:7) -- (3*360/14:7) -- (4*360/14:7) -- (5*360/14:7) -- (6*360/14:7) -- (7*360/14:7) -- (8*360/14:7) -- (9*360/14:7) -- (10*360/14:7) -- (11*360/14:7) -- (12*360/14:7) -- (13*360/14:7);
	
	\draw (0*360/14:7)  node (b1)     [black] {};
	\draw[blue, line width=1mm,->] (b1)+(-2,0) to (b1);

	\draw (1*360/14:7)  node (b2)     [white] {};
	\draw (2*360/14:7)  node (b3)     [black] {};
	\draw (3*360/14:7)  node (b4)     [white] {};
	\draw (4*360/14:7)  node (b5)     [black] {};
	\draw (5*360/14:7)  node (b6)     [white] {};
	\draw (6*360/14:7)  node (b7)     [black] {};
	\draw (7*360/14:7)  node (b8)     [white] {};
	\draw (8*360/14:7)  node (b9)     [black] {};
	\draw (9*360/14:7)  node (b10)    [white] {};
	\draw (10*360/14:7) node (b11)     [black] {};
	\draw (11*360/14:7) node (b12)     [white] {};
	\draw (12*360/14:7) node (b13)     [black] {};
	\draw (13*360/14:7) node (b14)     [white] {};

	\draw (-0.5*360/14:6)  node {$1$};
	\draw (-2.5*360/14:6)  node {$2$};
	\draw (-4.5*360/14:6)  node {$3$};
	\draw (-6.5*360/14:6)  node {$4$};
	\draw (-8.5*360/14:6)  node {$5$};
	\draw (-10.5*360/14:6)  node {$6$};
	\draw (-12.5*360/14:6)  node {$7$};

	\draw[connection] (b1) to  (b2);
	\draw[connection] (b2) to  (b3);
	\draw[connection] (b3) to  (b4);
	\draw[connection] (b4) to  (b5);
	\draw[connection] (b5) to  (b6);
	\draw[connection] (b6) to  (b7);
	\draw[connection] (b7) to  (b8);
	\draw[connection] (b8) to  (b9);
	\draw[connection] (b9) to  (b10);
	\draw[connection] (b10)to  (b11);
	\draw[connection] (b11) to (b12);
	\draw[connection] (b12) to (b13);
	\draw[connection] (b13) to (b14);
	\draw[connection] (b14) to (b1);
	
	\draw [ultra thick,<-] (-3,-2) arc(-160:160:1);
	\end{tikzpicture}
	
	\caption{A collection of oriented polygons of face type $(7)$. The labeling of
		the edges corresponds to the permutation $\pi=(1,2,3,4,5,6,7)\in\Sym{7}$. The
		root is indicated by the thick blue arrow. The orientation is indicated by the
		circular arrow.} \label{fig:polygon-oriented}
	
	\vspace{3ex}
	\centering
	\begin{tikzpicture}[scale=0.6,
	white/.style={circle,draw=black,fill=white,inner sep=4pt},
	black/.style={circle,draw=black,fill=black,inner sep=4pt},
	connection/.style={draw=black!80,black!80,auto}
	]
	\footnotesize
	
	\begin{scope}
	\clip (0,0) rectangle (10,10);
	\fill[blue!3] (0,0) rectangle (10,10);
	
	\draw [ultra thick,<-] (4,8) arc(-160:160:1);
	
	\draw (3,5) node (b1) [black] {};
	\draw (b1) +(10,0) node (b1prim) [black] {};
	
	\draw (b1) +(1,-3) node (b1-se) [white] {};
	\draw (b1) +(-1,-3) node (b1-sw) [white] {};
	
	\draw (8,8) node (b2) [black] {};
	\draw (b2) +(0,-10) node (b2prim) [black] {};
	\draw (b2) +(-10,0) node (b2prim2) [black] {};
	
	\draw (6,5) node (w1) [white] {};
	\draw (w1) +(0,10) node (w1prim) {};
	
	\draw (12,7) node (w2) [white] {};
	\draw (w2) +(-10,0) node (w2prim) [white] {};
	
	\draw[connection,pos=0.2] (b2) to node {\textcolor{black}{$4$}} node [swap] {} (w1prim);
	\draw[connection,pos=0.666] (b2prim) to node {\textcolor{black}{$4$}} node [swap] {} (w1);
	
	\draw[connection,pos=0.2] (b2) to node {\textcolor{black}{$6$}} node [swap] {} (w2);
	\draw[connection,pos=0.7] (b2prim2) to node {\textcolor{black}{$6$}} node [swap] {} (w2prim);
	
	\draw[connection] (b1) to node {\textcolor{black}{$2$}} node [swap] {} (b1-sw);
	
	\draw[connection] (b1) to node {\textcolor{black}{$3$}} node [swap] {} (b1-se);
	
	\draw[connection] (b1) to node {\textcolor{black}{$5$}} node [swap] {} (w1);
	
	\draw[connection] (w1) to node {} node [swap] {\textcolor{black}{$7$}} (b2);
	
	\draw[connection] (w2) to node {} node [swap] {\textcolor{black}{$1$}} (b1prim);
	\draw[connection] (w2prim) to node {} node [swap] {\textcolor{black}{$1$}} (b1);
	
	\draw[blue, line width=1mm,->] (w2prim)+(-1,-1) to (w2prim); 
	\end{scope}
	
	\draw[very thick,decoration={
		markings,
		mark=at position 0.666  with {\arrow{>}}},
	postaction={decorate}]  
	(0,0) -- (10,0);
	
	\draw[very thick,decoration={
		markings,
		mark=at position 0.666  with {\arrow{>}}},
	postaction={decorate}]  
	(0,10) -- (10,10);
	
	\draw[very thick,decoration={
		markings,
		mark=at position 0.666  with {\arrow{>>}}},
	postaction={decorate}]  
	(0,0) -- (0,10);
	
	\draw[very thick,decoration={
		markings,
		mark=at position 0.666  with {\arrow{>>}}},
	postaction={decorate}]  
	(10,0) -- (10,10);
	
	\end{tikzpicture}
	\caption{Example of an \emph{oriented map}. The map is drawn on a torus: the
		left side of the square should be glued to the right side, as well as bottom to
		top, as indicated by the arrows. The thick blue arrow indicates the root of the
		face. The orientation is indicated by the circular arrow. This map was created
		by glueing the edges of the oriented collection of polygons from
		\cref{fig:polygon-oriented}.} \label{fig:map-kerov}
\end{figure}

\subsection{Oriented maps}
\label{sec:oriented}

We say that a map is \emph{oriented} if the surface $\Sigma$ is orientable \emph{and}
on each connected component some orientation is selected, see \cref{fig:map-kerov}.

\emph{Summation over oriented maps with a specified face-type $\pi$}
should be understood as follows: we start by fixing an
appropriate collection $\mathcal{P}$ of polygons with labelled edges, bicolored
vertices, and with face-type $\pi$. \emph{Additionally, on each polygon we
	choose some orientation (``which direction of rotation should be understood as
	clockwise''),} see \cref{fig:polygon-oriented}. Then we consider \emph{only}
these perfect matchings on the set of all edges, which result with a glueing of
the polygons with the property that the originally selected orientations on the
polygons are consistent when one crosses an edge. We sum over the resulting
oriented map.

\subsection{Labels on oriented maps}
\label{sec:labels}

In the context of the \emph{oriented} maps it is more convenient to label only
some edges of the oriented polygons from~$\mathcal{P}$; more specifcially we
label only these edges which --- if one traverses the edges in the clockwise
cyclic order --- start with a white vertex, see \cref{fig:polygon-oriented}. 
The labelling can be encoded by a permutation $\pi$, the cycles of which correspond
to the cyclic order of the labels around the polygons (in the clockwise direction),
see \cref{fig:polygon-oriented}.  

With this convention, the orientations of the polygons are consistent in the
map obtained by glueing the edges of $\mathcal{P}$ if and only if in each pair
of glued edges exactly one carries a label, see \cref{fig:map-kerov}.

\medskip

The structure of such an oriented map can be uniquely recovered from pair of
permutations $\sigma_1, \sigma_2$, where the cycles of $\sigma_1$ encode the counterclockwise
cyclic order around the white vertices while the cycles of $\sigma_2$ encode the counterclockwise
cyclic order around the black vertices. For example, for the map from
\cref{fig:map-kerov} we have
\begin{align*}
\sigma_1 &= (1,6)(2)(3)(4,7,5),\\
\sigma_2 &= (1,2,3,5)(4,7,6).
\end{align*}

It is easy to see that the product
\[ \pi = \sigma_1 \sigma_2 \]
is the aforementioned permutation $\pi$ which gives the clockwise order of the
labels on the polygons from $\mathcal{P}$. In our example
\[ \pi=\sigma_1 \sigma_2 = (1,2,3,4,5,6,7), \]
see \cref{fig:polygon-oriented}.

\medskip

With this in mind, it is easy to see that, for a given integer partition $\pi$,
the \emph{summation over oriented maps with face-type $\pi$} is equivalent to
fixing some permutation $\pi\in\Sym{|\pi|}$ with the cycle decomposition given
by the partition $\pi$, and considering all solutions to the equation
\[ \big\{ (\sigma_1,\sigma_2) : \sigma_1,\sigma_2\in \Sym{|\pi|}, \quad \sigma_1 \sigma_2 = \pi \big\}.\]

\subsection{Proof of \cref{thm:maps}}
\label{sec:proof-oriented}

Our proof will be based on a simple double counting argument.

\begin{proof}[Proof of \cref{thm:maps}]
	In the light of the discussion from \cref{sec:labels}, the right-hand side of
	\eqref{eq:spin-stanley} can be interpreted as a sum over oriented maps with
	face-type $\pi$.
	
	\smallskip
	
	The oriented maps which we consider have labelled edges. We can remove these
	labels and still be able to recover them if we preserve the following
	information:
	\begin{itemize}
		\item we paint each face of the map with the colour of the corresponding
		polygon from $\mathcal{P}$;
		
		\item on each face of the map we decorate the white corner (\emph{``root''})
		which corresponds to the root of the corresponding polygon from~$\mathcal{P}$;
		
		\item on each connected component of the map we indicate the orientation.
	\end{itemize}
	
	\smallskip
	
	There are $2^{\ell(\pi)}$ ways to choose \emph{direction of the rotation} on
	each face a map (in the spirit of \cref{sec:non-oriented-revisited}). It follows
	from \eqref{eq:spin-stanley} that
	\[ \ChSpin_\pi(\xi) = \sum_M  
	\frac{1}{2^{c(M)}} \frac{1}{2^{\ell(\pi)}}\  (-1)^{|\pi|-|\mathcal{V}_\circ(M)|}\ N_{M}\big( D(\xi) \big),
	\]
	where the sum runs over oriented maps  with coloured faces, with each face having
	a decorated white corner, and with each face having some selected direction of
	rotation. Above, $c(M)$ denotes the number of connected components of $M$; with notations of 
	\eqref{eq:spin-stanley}
	we clearly have $c(M)=|\langle \sigma_1,\sigma_2\rangle|$.

	Let us remove the information about the orientation. In the light of
	\cref{sec:non-oriented-revisited}, the resulting object is a non-oriented map
	which happens to be orientable. Clearly, for each such a non-oriented but
	orientable map there are $2^{c(M)}$ choices for the orientation on each
	connected component of $M$. It follows that
	\[ \ChSpin_\pi(\xi) = \sum_M  
	\frac{1}{2^{\ell(\pi)}}\ (-1)^{|\pi|-|\mathcal{V}_\circ(M)|}\ N_{M}\big( D(\xi) \big),
	\]
	where the sum runs over non-oriented but orientable maps with face-type~$\pi$, as required.
\end{proof}

\section{Multirectangular coordinates, \\ spin Stanley polynomials}
	
\label{sec:multirectangular}

Our ultimate goal is to prove \cref{prop:isomorphism,thm:filtration} which
provide the link between (the filtration on) the linear Kerov--Olshanski algebra
$\KO$ and its spin counterpart $\Gamma$; we will do this in
\cref{sec:proof-of-isomorphism,sec:proof-of-filtration}.

We start in this section by preparing the tools: multirectangular coordinates
and Stanley polynomials.

\begin{figure}
	\begin{tikzpicture}	   
	   \clip (-0.3,-0.3) rectangle (9.5,6.5);
	   \fill[blue!10] (0,0) -- (7,0) -- (7,2) -- (4,2) -- (4,5) -- (0,5);
	   \draw[black!20] (0,0) grid (10,7);
	   \draw[ultra thick] (10,0) -- (0,0) -- (0,7);
	   \draw[thick] (7,0) -- (7,2) -- (0,2);
	   \draw[<->] (7.3,0) -- (7.3,2) node[midway,right] {$p_1$};
	   	   \draw[<->] (0,0.3) -- (7,0.3) node[midway,above] {$q_1$};
	   	   
	   	   \draw[thick] (4,2) -- (4,5) -- (0,5);
	   	   \draw[<->] (4.3,2) -- (4.3,5) node[midway,right] {$p_2$};
	   	   \draw[<->] (0,2.3) -- (4,2.3) node[midway,above] {$q_2$};
	   	   
	\end{tikzpicture}
	\caption{Multirectangular coordinates for partitions.}
	\label{fig:multirectangular}

\bigskip

	\begin{tikzpicture}	 
		   \clip (0.7,-0.3) rectangle (10.5,6.5);
		   \fill[blue!10] (4,4) -- (6,4) -- (6,2)-- (8,2) -- (8,0) -- (1,0) -- (1,1) -- (2,1) -- (2,2) -- (3,2) -- (3,3) -- (4,3) -- (4,4) -- (5,4);  
	\begin{scope}
	   \clip (11,0) -- (1,0) -- (1,1) -- (2,1) -- (2,2) -- (3,2) -- (3,3) -- (4,3) -- (4,4) -- (5,4) -- (5,5) -- (6,5) -- (6,6) -- (7,6) -- (7,7) -- (11,7);
	   	   \draw[black!20] (0,0) grid (11,11);
	\end{scope}
	
	\draw[ultra thick] (10,0) -- (1,0) -- (1,1) -- (2,1) -- (2,2) -- (3,2) -- (3,3) -- (4,3) -- (4,4) -- (5,4) -- (5,5) -- (6,5) -- (6,6) -- (7,6) -- (7,7);
	\draw[thick] (4,4) -- (6,4) -- (6,2) ;
	\draw[thick] (2,2) -- (8,2) -- (8,0);
		   \draw[<->] (8.3,0) -- (8.3,2) node[midway,right] {$\proP_1$};
	\draw[<->] (1,0.3) -- (8,0.3) node[midway,above] {$\proQ_1$};
	
	\draw[<->] (6.3,2) -- (6.3,4) node[midway,right] {$\proP_2$};
	\draw[<->] (3,2.3) -- (6,2.3) node[midway,above] {$\proQ_2$};
	
	\end{tikzpicture}
	\caption{Multirectangular coordinates for strict partitions.}
	\label{fig:multirectangular-strict}
\end{figure}

\subsection{Multirectangular coordinates} 

Following Stanley \cite{Stanley2003/04}, for tuples of integers
$P=(p_1,\dots,p_l)$, $Q=(q_1,\dots,q_l)$ such that $p_1,\dots,p_l\geq 0$ and
$q_1\geq \cdots\geq q_l\geq 0$ we consider the corresponding
\emph{multirectangular partition} $P\times Q$, cf.~\cref{fig:multirectangular}.

De Stavola \cite{DeStavolaThesis} adapted this notion to strict partitions: for
tuples of integers $\mathbf{P}=(\proP_1,\dots,\proP_l)$,
$\mathbf{Q}=(\proQ_1,\dots,\proQ_l)$ such that $\proP_1,\dots,\proP_l\geq 0$ and
$\proQ_2\leq \proQ_1-\proP_1$, $\proQ_3\leq \proQ_2-\proP_2$, \dots, $\proQ_l
\leq \proQ_{l-1}-\proP_{l-1}$, $0\leq \proQ_l-\proP_l$ he considered a
\emph{multirectangular strict partition} $\mathbf{P}\stimes\mathbf{Q}$,
cf.~\cref{fig:multirectangular-strict}; note that in the original work of De
Stavola both the multirectangular partition as well as the multirectangular
strict partition were denoted by the same symbol $\times$.
	
\medskip

The following result gives a link between the shifted multirectangular
coordinates of a given strict partition $\xi\in\SP$ and the multirectangular
coordinates of its double $D(\xi)$.

\begin{lemma}
	\label{lem:multi-strict-vs-usual} 
	
For each $l\geq 1$ there exist polynomials $p_1,\dots,p_l,q_1,\dots,q_l\in \Z[
\proP_1,\dots,\proP_l, \proQ_1,\dots,\proQ_l]$ of degree $1$ with the property
that
	\[ D(\mathbf{P}\stimes\mathbf{Q}) = P \times Q.\]
\end{lemma}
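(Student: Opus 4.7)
The plan is to compute $D(\mathbf{P} \stimes \mathbf{Q})$ explicitly in multirectangular form and to verify that every resulting coordinate $p_i, q_i$ is a polynomial of degree one in $(\proP_1, \dots, \proP_l, \proQ_1, \dots, \proQ_l)$. The geometric input will be the description of $D(\xi)$ visible in Figure \ref{fig:double}: the doubled diagram is cut by a staircase into the shifted Young diagram of $\xi$ (placed above the staircase) and its reflection across the diagonal (placed below). Consequently $D(\xi)_i = \xi_i + \xi^*_i$ for all $i \geq 1$, where $\xi^*_i$ denotes the length of column $i$ in the shifted diagram of $\xi$, with the convention $\xi_i = 0$ for $i > \ell(\xi)$.

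First, I will introduce the abbreviations $h_k := \proP_1 + \cdots + \proP_k$ and $w_k := h_{k-1} + \proQ_k$, representing respectively the cumulative height and the right-most column occupied by the $k$-th rectangle of $\xi = \mathbf{P} \stimes \mathbf{Q}$. The parts of $\xi$ are then described block by block: in the $k$-th rectangle one has $\xi_i = \proQ_k + h_{k-1} - i + 1$ for $h_{k-1} < i \leq h_k$. A short induction on $k$ then yields $\xi^*_i = i$ for $1 \leq i \leq h_l$, since column $i$ of the shifted diagram picks up exactly one box from each row of index $\leq i$; for $h_l < i \leq w_1$ a similar count gives $\xi^*_i = h_k$ on the interval $w_{k+1} < i \leq w_k$, using the convention $w_{l+1} := h_l$.

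Combining the two formulas, the parts of $D(\xi)$ organize into $2l$ constant-value blocks, stacked from the bottom (longest rows) to the top: for $k = 1, \dots, l$ a block of $\proP_k$ rows of length $\proQ_k + h_{k-1} + 1$, followed (for $k$ running from $l$ down to $1$) by a block of $\proQ_k - \proP_k - \proQ_{k+1}$ rows of length $h_k$, with the convention $\proQ_{l+1} := 0$. The multirectangular constraints on $(\mathbf{P}, \mathbf{Q})$ guarantee that the widths of these blocks are weakly decreasing from bottom to top, so this is a legitimate multirectangular presentation $D(\mathbf{P} \stimes \mathbf{Q}) = P \times Q$, and each $p_i, q_i$ is manifestly a degree-one polynomial in $(\proP_1, \dots, \proP_l, \proQ_1, \dots, \proQ_l)$.

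The only slightly delicate point will be the column-length computation for $i > h_l$, which telescopes cleanly once one notes $w_1 \geq w_2 \geq \cdots \geq w_l$ (a direct consequence of $\proQ_{k+1} \leq \proQ_k - \proP_k$); everything else in the argument is routine bookkeeping, and no more sophisticated tool is needed.
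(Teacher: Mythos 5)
Your proposal is correct and arrives at exactly the same degree-one polynomials that the paper displays in its proof (the $2l$ block heights $\proP_k$ and $\proQ_k-\proP_k-\proQ_{k+1}$, and the block widths $\proQ_k+\proP_1+\cdots+\proP_{k-1}+1$ and $\proP_1+\cdots+\proP_k$). The only difference is one of exposition: the paper simply writes the formulas down and leaves the verification to the reader, whereas you supply the derivation via the identity $D(\xi)_i = \xi_i + \xi^*_i$ and the row-by-row bookkeeping — a useful complement but not a genuinely different route.
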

\begin{proof}
The desired polynomials are given by
\begin{equation*}
\label{eq:double-multirectangular}
\left\{
\begin{aligned}
	p_1 &= \proP_1, & q_1 &= \proQ_1+1,\\
	p_2 &= \proP_2, & q_2 &= \proP_1+\proQ_2+1, \\
	    &\vdots     &     & \vdots \\
	p_l &= \proP_l, & q_l &= \proP_1+\cdots+\proP_{l-1}+\proQ_l+1,\\
	p_{l+1} &=  \proQ_l - \proP_l, & q_{l+1} &= \proP_1+\cdots+\proP_l, \\
	p_{l+2} &=  \proQ_{l-1} - (\proQ_l + \proP_{l-1}), & 
	q_{l+2} &= \proP_1+\cdots+\proP_{l-1}, & \\
	        & \vdots &    & \vdots \\
	p_{2l}  &= \proQ_1- (\proQ_2 + \proP_1), & q_{2l}&= \proP_1. 
\end{aligned}
\right.	
\end{equation*}
\end{proof}

\subsection{Stanley polynomials}

Multirectangular coordinates $P,Q$ provide a convenient way of parametrizing
partitions; the fact that a partition can be represented in several ways with
such coordinates is not relevant.

In particular, for a function $F\in\KO$ it is convenient to consider a map
\begin{equation} 
\label{eq:Stanley}
(P,Q) \mapsto F(P\times Q)
\end{equation} 
which --- as it turns out --- can be identified with a unique polynomial in the
multirectangular coordinates. This polynomial, which is referred to as
\emph{Stanley polynomial} \cite{Stanley2003/04}, is a convenient tool for
studying some enumerative and asymptotic problems of the representation theory
of the symmetric groups \cite{DolegaFeraySniady2008}.

The explicit form of this Stanley polynomial $\Ch_\pi(P\times Q)$ in the special
case when $F=\Ch_\pi$ is the normalized character was conjectured by Stanley
\cite{Stanley-preprint2006} and proved by F\'eray \cite{Feray2010}, see also
\cite{FeraySniady2011a}; we will refer to this result as \emph{Stanley character
	formula} (it is just a reformuation of \cref{thm:stanley-linear}).

\medskip

De Stavola \cite{DeStavolaThesis} initiated investigation of the analogous
coordinates for a function $F\in\Gamma$ and he asked for a closed formula for
$\ChSpin_\pi(\mathbf{P}\stimes\mathbf{Q})$.

\subsection{Closed formula for spin Stanley polynomial}
\label{sec:Stanley-strict} 

Multirectangular coordinates are very convenient in the context of Stanley character formula. 
More specifically, it is easy to see that
\begin{equation} 
\label{eq:explicit-stanley}
N_{\sigma_1,\sigma_2}(P\times Q) = 
\sum_{\kappa_2} 
\prod_{c_1\in C(\sigma_1)} q_{\kappa_1(c_1)}  
\prod_{c_2\in C(\sigma_2)} p_{\kappa_2(c_2)},  
\end{equation}
where the sum runs over all functions
$\kappa_2\colon C(\sigma_2) \to [l]$
and the function $\kappa_1\colon C(\sigma_1)\to[l]$ is defined by
\[ \kappa_1(c_1)= \max\big\{ \kappa_2(c_2) : 
c_2\in C(\sigma_2)\text{ and } c_1\cap c_2\neq \emptyset \big\}\quad \text{for } c_1\in C(\sigma_1).\]
The explicit form of the Stanley polynomial
$\ChSpin_\pi(\mathbf{P}\stimes\mathbf{Q})$ for $\pi\in\OP$ can be found by
combining \cref{thm:spin-Stanley} applied to $\xi=\mathbf{P}\stimes\mathbf{Q}$ and 
$P\times Q=D(\mathbf{P}\stimes\mathbf{Q})$ with
\cref{lem:multi-strict-vs-usual}. In order to prove the uniqueness of this
polynomial one can adapt the corresponding part of the proof of \cite[Lemma
2.4]{DolegaFeraySniady2014}. 

\section{Asymptotics of spin Stanley polynomials}
\label{sec:asymptotic-stanley}

Recall that our ultimate goal is to prove \cref{prop:isomorphism,thm:filtration} which
provide the link between (the filtration on) the linear Kerov--Olshanski algebra
$\KO$ and its spin counterpart $\Gamma$.

As an intermediate step we present in this section the link between the
filtration on the (linear or spin) Kerov--Olshanski algebra and the degrees of
Stanley polynomials
(\cref{prop:filtration-and-degree-linear,prop:filtration-and-degree-spin}) which
might be of independent interest.

\subsection{Filtration vs Stanley polynomials: the linear case}

\begin{lemma}
	\label{lem:top-degree-spin-Stanley-linear}
	For arbitrary integer partition $\pi$ the corresponding linear Stanley
	polynomial $\Ch_\pi(P\times Q)$ is of degree
	$|\pi|+\ell(\pi)$. 
	
	Its homogeneous top-degree part is given by
	\begin{equation} 
	\label{eq:homogeneous-top-degree-stanley-linear}
	\left(\Ch_\pi\right)^{\ttop}(P\times Q):=
	\sum_{\substack{\sigma_1,\sigma_2\in\Sym{k} \\ \sigma_1 \sigma_2=\pi \\ |C(\sigma_1)|+|C(\sigma_2)|=|\pi|+\ell(\pi)}} 
	 (-1)^{\sigma_1}\ N_{\sigma_1,\sigma_2}(P\times Q).  
	\end{equation}
\end{lemma}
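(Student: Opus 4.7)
The plan is to combine the linear Stanley character formula (\cref{thm:stanley-linear}) with the explicit product expression \eqref{eq:explicit-stanley} for $N_{\sigma_1,\sigma_2}(P\times Q)$. That expression exhibits each summand as a polynomial in the multirectangular coordinates $(P,Q)$ which is \emph{homogeneous} of degree exactly $|C(\sigma_1)|+|C(\sigma_2)|$, with $|C(\sigma_1)|$ factors of type $q_i$ and $|C(\sigma_2)|$ factors of type $p_j$. Hence
\[ \Ch_\pi(P\times Q) = \sum_{\sigma_1 \sigma_2=\pi} (-1)^{\sigma_1}\, N_{\sigma_1,\sigma_2}(P\times Q) \]
decomposes as a sum of homogeneous polynomials whose degrees equal $|C(\sigma_1)|+|C(\sigma_2)|$, and determining the top degree is reduced to maximising this quantity over factorisations $\sigma_1 \sigma_2 = \pi$ in $\Sym{k}$.

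The key step is the classical Jacques--Cori (genus) inequality: for any $\sigma_1,\sigma_2 \in \Sym{k}$ with $\sigma_1 \sigma_2 = \pi$ one has
\[ |C(\sigma_1)| + |C(\sigma_2)| + |C(\pi)| \;\leq\; k + 2\,|\sigma_1 \vee \sigma_2|, \]
with equality precisely when the associated bicoloured map is a disjoint union of planar (genus-zero) components. Because each cycle of $\pi=\sigma_1\sigma_2$ lies inside a single orbit of $\langle \sigma_1,\sigma_2\rangle$, we have $|\sigma_1\vee\sigma_2| \leq \ell(\pi)$, and combining this with the previous inequality gives
\[ |C(\sigma_1)| + |C(\sigma_2)| \;\leq\; k + \ell(\pi) \;=\; |\pi|+\ell(\pi), \]
with equality if and only if the bicoloured map is planar with exactly $\ell(\pi)$ connected components (one per cycle of $\pi$). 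Truncating the sum from \cref{thm:stanley-linear} to those factorisations saturating this bound yields precisely formula \eqref{eq:homogeneous-top-degree-stanley-linear}.

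It remains to check that the bound is actually attained, i.e.~that the homogeneous polynomial $(\Ch_\pi)^{\ttop}(P\times Q)$ is non-zero. The factorisation $\sigma_1=\pi$, $\sigma_2=\id$ satisfies $|C(\sigma_1)|+|C(\sigma_2)|=\ell(\pi)+k$ and contributes with positive sign. To see that its top-degree contribution does not cancel against the others, I would extract the coefficient of the monomial $q_l^{\ell(\pi)}\,p_l^{|\pi|}$: under the summation \eqref{eq:explicit-stanley} this coefficient is non-zero only for colourings $\kappa_2$ that are the constant function equal to $l$, and for such a colouring the contributions from different factorisations cannot cancel because they correspond to different factorisations of $\pi$ via a surjective maximum-preserving map, leaving a genuinely non-zero leading term. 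The main obstacle in writing this out is precisely the non-vanishing of the top-degree part; the degree upper bound itself is a direct application of the genus inequality applied termwise.
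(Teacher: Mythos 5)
Your proof is correct and overall close in spirit to the paper's, but the central inequality is derived by a different (if closely related) route. Where you invoke the genus inequality
\[
|C(\sigma_1)| + |C(\sigma_2)| + |C(\pi)| \le k + 2\,|\sigma_1 \vee \sigma_2|
\]
together with $|\sigma_1\vee\sigma_2|\le\ell(\pi)$, the paper gets the bound in one step from sub-additivity of the word-length $\|\sigma\|:=k-|C(\sigma)|$ with respect to the Cayley metric generated by transpositions: $\|\sigma_1\|+\|\sigma_2\|\ge\|\sigma_1\sigma_2\|=\|\pi\|$, which directly yields $|C(\sigma_1)|+|C(\sigma_2)|\le 2k-\|\pi\|=|\pi|+\ell(\pi)$. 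Both routes prove the identical inequality; the paper's version is shorter and avoids the intermediate orbit count, while yours comes with the extra dividend of characterising the equality case geometrically (planar bicoloured map with one component per cycle of $\pi$), which foreshadows the map-theoretic reformulation in \cref{thm:maps}.

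One remark on the non-vanishing: the paper's proof of this lemma in fact stops at the upper bound and does not verify there that the degree-$\bigl(|\pi|+\ell(\pi)\bigr)$ part is nonzero (this is implicitly settled later, inside the proof of \cref{prop:filtration-and-degree-linear}). Your sketch can be tightened into a clean argument: since $N_{\sigma_1,\sigma_2}(P\times Q)$ is homogeneous with exactly $|C(\sigma_1)|$ factors from the $q$-variables and $|C(\sigma_2)|$ from the $p$-variables, a monomial of the form $q_l^{\ell(\pi)}p_l^{|\pi|}$ can only arise when $|C(\sigma_2)|=|\pi|=k$, i.e.\ $\sigma_2=\id$, hence only from the single factorisation $\sigma_1=\pi$, $\sigma_2=\id$ (with the constant colouring $\kappa_2\equiv l$); its coefficient is $(-1)^{\pi}=\pm1\ne 0$. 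So no cancellation argument across different factorisations is actually needed.
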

\begin{proof}
	Our strategy is to investigate the summands on the right-hand side of the
linear Stanley character formula \eqref{eq:stanley-linear} . Note that
\eqref{eq:explicit-stanley} implies that
	\begin{equation} 
	\label{eq:embedding-homogeneous}
	N_{\sigma_1,\sigma_2}\big( P \times Q \big) \in \Z[p_1,\dots,p_l,q_1,\dots,q_l] 
	\end{equation}
	is a homogeneous polynomial of degree $|C(\sigma_1)|+|C(\sigma_2)|$.

	For a permutation
	$\pi\in\Sym{k}$ we denote by $\|\pi\|:=k - |C(\pi)|$ the minimal number of
	factors necessary to write $\pi$ as a product of transpositions. Triangle
	inequality for $\pi=\sigma_1 \sigma_2$ implies that
	\begin{equation}
	\label{eq:triangle}
	|C(\sigma_1)|+|C(\sigma_2)| = 2k - \big( \|\sigma_1| + \|\sigma_2\| \big) \leq 2k - \| \pi \| = |\pi| + \ell(\pi).
	\end{equation}
	It follows therefore that 
	\[ \Ch_\pi(P\times Q) \]
	is indeed a polynomial of degree bounded from above by $|\pi| + \ell(\pi)$, as required.
	
	In order to find the homogeneous top-degree part of this polynomial it is enough
	to restrict the summation in \eqref{eq:spin-stanley} to the pairs
	$\sigma_1,\sigma_2$ for which \eqref{eq:triangle} becomes equality. 
	\end{proof}

\begin{proposition}
	\label{prop:filtration-and-degree-linear}
	Let $F\in \KO$ and $k\geq 0$ be an integer.
	
	Then $F\in \F_k$ if and only if for each integer $l\geq 1$ 
	\begin{equation}
	\label{eq:my-stanley}
	F(P\times Q) \in \C[p_1,\dots,p_l,q_1,\dots,q_l] 
	\end{equation} 
	is a polynomial of degree at most $k$.
\end{proposition}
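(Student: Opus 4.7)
The plan is to split the equivalence into two implications; the forward direction is immediate from the preceding lemma, while the converse requires a non-vanishing argument for the top-degree part of the Stanley polynomial.

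For the forward direction, if $F=\sum_{|\pi|+\ell(\pi)\le k}c_\pi\Ch_\pi$ lies in $\F_k$, then by \cref{lem:top-degree-spin-Stanley-linear} each $\Ch_\pi(P\times Q)$ is a polynomial of degree at most $|\pi|+\ell(\pi)\le k$, so the linear combination $F(P\times Q)$ also has degree at most $k$.

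For the converse I would argue by contraposition. Expanding $F=\sum_\pi c_\pi\Ch_\pi$ uniquely in the linear basis $(\Ch_\pi)_{\pi\in\Part}$ of $\KO$ and assuming $F\notin\F_k$, let $d:=\max\{|\pi|+\ell(\pi):c_\pi\ne 0\}>k$. By \cref{lem:top-degree-spin-Stanley-linear}, the degree-$d$ homogeneous component of $F(P\times Q)$ is exactly
\[ \sum_{|\pi|+\ell(\pi)=d} c_\pi\,(\Ch_\pi)^{\ttop}(P\times Q), \]
so it suffices to show that for some integer $l$ this polynomial is nonzero in $\C[p_1,\dots,p_l,q_1,\dots,q_l]$.

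The main obstacle is precisely this non-vanishing. My plan is to invoke the standard fact from Kerov--Olshanski theory that the top-degree parts $(\Ch_\pi)^{\ttop}$ constitute a basis of the associated graded algebra of $\KO$ with respect to the filtration $(\F_i)$. Concretely, expanding each $\Ch_\pi$ in Biane's free cumulants $R_2,R_3,\dots$ --- which are algebraically independent generators of $\KO$ --- the top-degree component of $\Ch_\pi$ is a nonzero scalar multiple of the monomial $\prod_i R_{\pi_i+1}$ (parts equal to $1$ being absorbed via the identity $\Ch_{\nu\cup 1^r}=(n-|\nu|)^{\downarrow r}\Ch_\nu$ together with $R_2\leftrightarrow n$), so distinct such monomials are linearly independent. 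Hence the displayed sum is a nonzero element of $\KO$. Since the map $F\mapsto F(P\times Q)$ is injective on $\KO$ for $l$ large enough --- the free cumulants $R_j$ remain algebraically independent as Stanley polynomials, which can be verified from their explicit shape in multirectangular coordinates --- the degree-$d$ component is a nonzero polynomial, contradicting the hypothesis $\deg F(P\times Q)\le k$ and completing the proof.
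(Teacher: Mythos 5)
Your forward direction agrees with the paper's and is fine. The converse, however, contains a genuine gap in the step "Since the map $F\mapsto F(P\times Q)$ is injective on $\KO$ for $l$ large enough\dots the degree-$d$ component is a nonzero polynomial."

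The issue is that you conflate injectivity of the \emph{filtered} map $\Phi\colon\KO\to\C[p_1,\dots,p_l,q_1,\dots,q_l]$ with injectivity of the induced map $\operatorname{gr}\Phi$ on the \emph{associated graded} algebra, and it is only the latter which gives the non-vanishing you need. To see that these are not the same, consider the toy example $\C[x,y]\to\C[u,v]$ with $x\mapsto u$, $y\mapsto u^2+v$, where $x$ has weight~$1$ and $y$ has weight~$2$: the map is injective and degree-nonincreasing, but $y-x^2$ has weight~$2$ while its image $v$ has degree~$1$, so the induced graded map kills $y-x^2$. In your setting, knowing that $\sum_{|\pi|+\ell(\pi)=d}c_\pi\Ch_\pi\notin\F_{d-1}$ (which indeed follows from the free-cumulant expansion, or even more elementarily from linear independence of the normalized characters) and that $\Phi$ is injective does \emph{not} by itself imply that the degree-$d$ homogeneous part of its Stanley polynomial is nonzero: $\Phi$ could lower the degree. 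What you actually need is that the \emph{top-degree homogeneous parts} $R_j^{\ttop}(P\times Q)$ are algebraically independent (so that $\operatorname{gr}\Phi$ is injective), and this is strictly stronger than the algebraic independence of the $R_j(P\times Q)$ that you invoke; it would have to be verified separately.

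The paper sidesteps this entirely with a different device: it applies the map $\Psi_m$ which takes the degree-$m$ homogeneous part and then substitutes $p_1=\cdots=p_l=1$, reducing the needed linear independence to the elementary fact that the power-sum polynomials $p_\pi(q_1,\dots,q_l)$ of degree $\leq l$ are linearly independent (these arise as the leading $q$-degree terms, coming from the single summand $\sigma_1=\operatorname{id}$, $\sigma_2=\pi$ in \eqref{eq:homogeneous-top-degree-stanley-linear}). This is more self-contained and avoids free cumulants altogether. Your route could be salvaged by supplying the algebraic independence of the $R_j^{\ttop}(P\times Q)$, but as written the argument does not close.
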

\begin{proof}
	
	The vector space $\F_k$ is a linear span of certain normalized characters; for
	this reason it is enough to consider $F=\Ch_\pi\in\F_k$ with $\pi$ as in
	\eqref{eq:filtration}. 
Application of \cref{lem:top-degree-spin-Stanley-linear}  concludes the proof
of the implication in one direction.

	\medskip
	
	We shall prove the opposite implication.
	Let $m\geq 0$ be the minimal integer with the
	property that $F\in\F_m$; our goal is to show that $m\leq k$. Suppose that this
	is not the case and $m>k$. 
	We consider the map
	\begin{multline}
	\label{eq:homogeneous}
	\Psi_m \colon \F_m \ni G \mapsto \\ \left. \Big( [\text{homogeneous part of degree $m$}]\
	G(P\times Q) \Big) \right|_{p_1=\cdots=p_l=1}
	\end{multline}
	which selects the top-degree homogeneous part of Stanley's polynomial and
\emph{afterwards} substitutes $p_1=\cdots=p_l=1$. We will show that $\ker
\Psi_m = \F_{m-1}$, provided $l\geq m$; this would conclude the proof because
$m>k$ implies that $F\in \ker \Psi_m = \F_{m-1}$ which contradicts the 
minimality of $m$.
	
	\medskip
	
	Clearly the quotient space $\F_{m}/\F_{m-1}$ is a linear span of 
	\begin{equation}
	\label{eq:quotient}
	\big(\Ch_\pi + \F_{m-1} : |\pi|+\ell(\pi)=m \big).
	\end{equation}
	The map $\Psi_m$ maps the family \eqref{eq:quotient} to 
	\begin{equation}
	\label{eq:quotient2} 
	\big(\Psi_m (\Ch_\pi) : |\pi|+\ell(\pi)=m \big); 
	\end{equation}
	for our purposes of proving $\ker \Psi_m=\F_{m-1}$ it is enough to show that
\eqref{eq:quotient2} is a family of linearly independent vectors. We shall do
it in the following.
	
	\smallskip
	
	In order to find the top-degree part of the polynomial
	\begin{equation}
	\label{eq:substitute} 
	\Psi_m (\Ch_\pi)\in\Z[q_1,\dots,q_l]
	\end{equation}
	we combine \eqref{eq:homogeneous-top-degree-stanley-linear} with
\eqref{eq:explicit-stanley}. This top-degree part corresponds to these summands
in \eqref{eq:homogeneous-top-degree-stanley-linear} for which $|C(\sigma_1)|$
takes the maximal possible value. This is clearly the summand for
$\sigma_1=\operatorname{id}\in\Sym{|\pi|}$ and $\sigma_2=\pi\in\Sym{|\pi|}$. In
this way we proved that \eqref{eq:substitute} is a polynomial of degree $|\pi|$
and its homogeneous part of this maximal degree is equal to the power-sum
symmetric function
	 \begin{equation} 
	\label{eq:power-sum} p_\pi(q_1,\dots,q_l).
	\end{equation}

Suppose that 
\begin{equation}
\label{eq:combination} \sum_{|\pi|+\ell(\pi)=m} a_\pi\ \Psi_m (\Ch_\pi) = 0 
\end{equation}
for some coefficients $a_\pi\in\C$ which are not all equal to
zero and denote $n:=\max \{ |\pi| : a_\pi\neq 0 \}$. We consider the homogeneous
part of the polynomial on the left-hand side of \eqref{eq:combination} of degree
$n$; by maximality of $n$, only partitions $\pi$ with $|\pi|=n$ contribute, and
each such a partition contributes with the power-sum symmetric function
\eqref{eq:power-sum}. Since power-sum symmetric polynomials in $l$ variables of
degree $n\leq m\leq l$ are linearly independent, it follows that $a_\pi=0$ for
$|\pi|=n$ which contradicts the definition of $n$.
This concludes the proof of the equality $\ker \Psi_m=\F_{m-1}$.
\end{proof}

This result has a spin counterpart (\cref{prop:filtration-and-degree-spin}),
however adapting the above proof to the spin setup requires
some preparations which we present below.

\subsection{Overlap double of a strict partition}
\label{sec:symmetric-double}

\begin{figure}

	\begin{tikzpicture}[xscale=0.6,yscale=0.6]
	\begin{scope}
	\clip (0,0) -- (0,1) -- (1,1) -- (1,2) -- (2,2) -- (2,3) -- (3,3) -- (4,3) -- (4,2) -- (6,2) -- (6,1) -- (6,0);
	\draw[gray] (0,0) grid (8,3);
	\end{scope}
	\begin{scope}[rotate=90,yscale=-1]
	\clip (0,0) -- (0,1) -- (1,1) -- (1,2) -- (2,2) -- (2,3) -- (3,3) -- (4,3) -- (4,2) -- (6,2) -- (6,1) -- (6,0);
	\draw[gray] (0,0) grid (8,3);
	\end{scope}    
	\begin{scope}
	\draw[ultra thick] (3,3) -- (4,3) -- (4,2) -- (6,2) -- (6,1) -- (6,0) -- (0,0);
	\draw[line width=0.2cm, opacity=0.4,blue,line cap=round] (0.5,0.5) -- (5.5,0.5);
	\draw[line width=0.2cm, opacity=0.4,blue,line cap=round] (1.5,1.5) -- (5.5,1.5);
	\draw[line width=0.2cm, opacity=0.4,blue,line cap=round] (2.5,2.5) -- (3.5,2.5);	 
	\end{scope}
	\begin{scope}[rotate=90,yscale=-1]
	\draw[ultra thick]  (3,3) -- (4,3) -- (4,2) -- (6,2) -- (6,1) -- (6,0) -- (0,0);
	\draw[line width=0.2cm, opacity=0.4,OliveGreen,line cap=round] (0.5,0.5) -- (5.5,0.5);
	\draw[line width=0.2cm, opacity=0.4,OliveGreen,line cap=round] (1.5,1.5) -- (5.5,1.5);
	\draw[line width=0.2cm, opacity=0.4,OliveGreen,line cap=round] (2.5,2.5) -- (3.5,2.5);	
	\end{scope}    
	\end{tikzpicture}
	
	\caption{The overlap double $\Dover(\xi)=(6,6,4,3,2,2)$ of the strict
	partition $\xi=(6,5,2)$ from \cref{fig:strict}.} 
\label{fig:doubleS}
\end{figure}
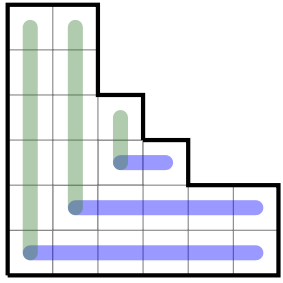

For a strict partition $\xi\in\SP$ we consider its \emph{overlap double}
$\Dover(\xi)\in \Part$ which is obtained by combining the shifted Young diagram
$\xi$ and its transpose so that they overlap along the diagonal boxes,
cf.~\cref{fig:doubleS}.

Let $p_1,\dots,p_l,q_1,\dots,q_l\in \Z[ \proP_1,\dots,\proP_l,
\proQ_1,\dots,\proQ_l]$ be the homogeneous part of degree $1$ of the polynomials
\eqref{eq:double-multirectangular}; in other words
\begin{equation}
\label{eq:double-multirectangular-H}
\left\{
\begin{aligned}
p_1 &= \proP_1, & q_1 &= \proQ_1,\\
p_2 &= \proP_2, & q_2 &= \proP_1+\proQ_2, \\
&\vdots     &     & \vdots \\
p_l &= \proP_l, & q_l &= \proP_1+\cdots+\proP_{l-1}+\proQ_l,\\
p_{l+1} &=  \proQ_l - \proP_l, & q_{l+1} &= \proP_1+\cdots+\proP_l, \\
p_{l+2} &=  \proQ_{l-1} - (\proQ_l + \proP_{l-1}), & 
q_{l+2} &= \proP_1+\cdots+\proP_{l-1}, & \\
& \vdots &    & \vdots \\
p_{2l}  &= \proQ_1- (\proQ_2 + \proP_1), & q_{2l}&= \proP_1. 
\end{aligned}
\right.	
\end{equation}

It is easy to check that with this choice
\begin{equation} 
\label{eq:symmetric-double-multi}
\Dover(\mathbf{P}\stimes\mathbf{Q}) = P \times Q.
\end{equation}

\subsection{Top-degree part of spin Stanley polynomials} 

\cref{lem:top-degree-spin-Stanley-linear} has the following spin counterpart.

\begin{lemma}
\label{lem:top-degree-spin-Stanley} 

For arbitrary $\pi\in\OP_k$ the corresponding spin Stanley polynomial
$\ChSpin_\pi(\mathbf{P}\stimes\mathbf{Q})$ is of degree $|\pi|+\ell(\pi)$. Its
homogeneous top-degree part is given by
\begin{multline} 
\label{eq:homogeneous-top-degree-stanley}
\left(\ChSpin_\pi\right)^{\ttop}(\mathbf{P}\stimes\mathbf{Q}):=
\\
	\sum_{\substack{\sigma_1,\sigma_2\in\Sym{k} \\ \sigma_1 \sigma_2=\pi \\ |C(\sigma_1)|+|C(\sigma_2)|=|\pi|+\ell(\pi)}} 
	\frac{1}{2^{|\sigma_1\vee \sigma_2|}}\ (-1)^{\sigma_1}\ N_{\sigma_1,\sigma_2}\big( \Dover(\mathbf{P} \stimes\mathbf{Q}) \big).  
\end{multline}
\end{lemma}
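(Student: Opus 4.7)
The plan is to mirror the argument from the linear case (Lemma~\ref{lem:top-degree-spin-Stanley-linear}), feeding the spin Stanley character formula (Theorem~\ref{thm:spin-Stanley}) into the shifted multirectangular coordinates. First, substituting $\xi=\mathbf{P}\stimes\mathbf{Q}$ into \eqref{eq:spin-stanley} and applying Lemma~\ref{lem:multi-strict-vs-usual}, which identifies $D(\mathbf{P}\stimes\mathbf{Q})=P\times Q$, gives an expression for $\ChSpin_\pi(\mathbf{P}\stimes\mathbf{Q})$ as a weighted sum of the $N_{\sigma_1,\sigma_2}(P\times Q)$, where now each $p_i$ and $q_i$ is an \emph{inhomogeneous} polynomial of degree $1$ in the shifted variables $\proP_j,\proQ_j$ as recorded in \eqref{eq:double-multirectangular}.

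Next, by the explicit formula \eqref{eq:explicit-stanley}, $N_{\sigma_1,\sigma_2}(P\times Q)$ is homogeneous of degree $|C(\sigma_1)|+|C(\sigma_2)|$ in the $p_i,q_i$. After substituting degree-$1$ polynomials for those variables, the resulting polynomial in the $\proP_j,\proQ_j$ has degree at most $|C(\sigma_1)|+|C(\sigma_2)|$, and its homogeneous top-degree part is obtained by retaining only the leading (degree-$1$ homogeneous) parts of the substituted polynomials. Comparing \eqref{eq:double-multirectangular} with \eqref{eq:double-multirectangular-H}, these leading parts are precisely the coordinates describing $\Dover(\mathbf{P}\stimes\mathbf{Q})$ via \eqref{eq:symmetric-double-multi}. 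Hence the top-degree part of $N_{\sigma_1,\sigma_2}\big(D(\mathbf{P}\stimes\mathbf{Q})\big)$ equals $N_{\sigma_1,\sigma_2}\big(\Dover(\mathbf{P}\stimes\mathbf{Q})\big)$, which is exactly what produces the passage from $D$ to $\Dover$ in \eqref{eq:homogeneous-top-degree-stanley}. The triangle inequality $\|\sigma_1\|+\|\sigma_2\|\geq\|\pi\|$ used in the linear proof then yields $|C(\sigma_1)|+|C(\sigma_2)|\leq |\pi|+\ell(\pi)$, so the overall degree is bounded by $|\pi|+\ell(\pi)$, and only those pairs $(\sigma_1,\sigma_2)$ saturating this inequality contribute to the homogeneous component of top degree. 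The factor $1/2^{|\sigma_1\vee\sigma_2|}$ is a scalar that has no effect on the degree analysis; it is simply carried along into \eqref{eq:homogeneous-top-degree-stanley}.

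The expected obstacle is verifying that the displayed top-degree part is \emph{nonzero}, so that the degree is exactly $|\pi|+\ell(\pi)$ rather than merely bounded by it. The strategy here is to isolate a distinguished summand that cannot cancel with the rest, in the spirit of the power-sum linear independence argument used in the proof of Proposition~\ref{prop:filtration-and-degree-linear}: one specializes or restricts variables so that the extremal contribution (for instance coming from $\sigma_1=\operatorname{id}$, $\sigma_2=\pi$, with its well-identified weight $2^{-\ell(\pi)}$ and power-sum-like leading monomial) is separated from the others. The mild extra bookkeeping here, compared with the linear case, is that one must track the weights $1/2^{|\sigma_1\vee\sigma_2|}$ through this specialization; since these weights are constants depending only on the combinatorics of $(\sigma_1,\sigma_2)$, they do not interfere with the linear independence of the leading monomials and the argument goes through unchanged.
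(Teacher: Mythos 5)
Your proof takes the same route as the paper's: substitute $\xi=\mathbf{P}\stimes\mathbf{Q}$ into the spin Stanley formula, use \cref{lem:multi-strict-vs-usual} together with the observation that the degree-$1$ homogeneous part of the substitution \eqref{eq:double-multirectangular} is \eqref{eq:double-multirectangular-H}, so that extracting the top-degree homogeneous part replaces $D$ by $\Dover$ via \eqref{eq:symmetric-double-multi}, and then invoke the triangle inequality \eqref{eq:triangle} to bound the degree and retain only the saturating pairs $(\sigma_1,\sigma_2)$. Your extra remark that one must still check the displayed top-degree sum is nonzero (so that the degree is exactly, not merely at most, $|\pi|+\ell(\pi)$) is a fair point the paper's own proof of this lemma leaves tacit, establishing it only afterwards through the power-sum specialization inside the proof of \cref{prop:filtration-and-degree-spin}; since your sketch of how to close that gap coincides with what the paper does there, this is a welcome observation rather than a departure from the argument.
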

\begin{proof}
We revisit the proof of \cref{lem:top-degree-spin-Stanley-linear} but this time
we consider the spin Stanley formula \eqref{eq:spin-stanley}.
\cref{lem:multi-strict-vs-usual} implies therefore that
\begin{equation} 
\label{eq:embedding-inhomogeneous}
N_{\sigma_1,\sigma_2}\big( D(\mathbf{P} \stimes\mathbf{Q}) \big) \in \Z[\proP_1,\dots,\proP_l,\proQ_1,\dots,\proQ_l] 
\end{equation}
is an \emph{inhomogeneous} polynomial of the degree $|C(\sigma_1)|+|C(\sigma_2)|$, obtained from
\eqref{eq:embedding-homogeneous} by the substitution
\eqref{eq:double-multirectangular}. 

The homogeneous part of \eqref{eq:embedding-inhomogeneous} of this top degree
can be therefore obtained from \eqref{eq:embedding-homogeneous} by the
\emph{homogeneous} substitution \eqref{eq:double-multirectangular-H}. Equation
\eqref{eq:symmetric-double-multi} implies therefore that this homogeneous part
is equal to
\begin{equation} 
\label{eq:embedding-homogeneous-again}
 N_{\sigma_1,\sigma_2}\big( \Dover(\mathbf{P} \stimes\mathbf{Q}) \big) \in \Z[\proP_1,\dots,\proP_l,\proQ_1,\dots,\proQ_l]. 
\end{equation}

\medskip

Inequality \eqref{eq:triangle} implies therefore that 
\[ \ChSpin_\pi(\mathbf{P}\stimes\mathbf{Q}) \]
is indeed a polynomial of degree bounded from above by $|\pi| + \ell(\pi)$, as required.

In order to find the homogeneous top-degree part of this polynomial it is enough
to: (a) restrict the summation in \eqref{eq:spin-stanley} to pairs
$\sigma_1,\sigma_2$ for which \eqref{eq:triangle} becomes equality and then (b)
to consider the homogeneous part \eqref{eq:embedding-homogeneous-again} of the
surviving summands. This concludes the proof.
\end{proof}

\subsection{Filtration vs Stanley polynomials: the spin case}

\begin{proposition}
	\label{prop:filtration-and-degree-spin}
	Let $F\in \Gamma$ and $k\geq 0$ be an integer.
	
	Then $F\in \G_k$ if and only if for each integer $l\geq 1$ 
	\[ F(\mathbf{P}\stimes\mathbf{Q}) \in \C[\proP_1,\dots,\proP_l,\proQ_1,\dots,\proQ_l] \]
	is a polynomial of degree at most $k$.
\end{proposition}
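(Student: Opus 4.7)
The plan is to mirror the proof of \cref{prop:filtration-and-degree-linear}, adapted to the spin setting. The forward direction will follow directly from \cref{lem:top-degree-spin-Stanley}: if $F = \sum a_\class \ChSpin_\class \in \G_k$ with each $|\class| + \ell(\class) \leq k$, then each spin Stanley polynomial $\ChSpin_\class(\mathbf{P}\stimes\mathbf{Q})$ has degree at most $|\class| + \ell(\class) \leq k$, and hence so does $F(\mathbf{P}\stimes\mathbf{Q})$.

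For the backward direction, I would argue by contradiction. Let $m \geq 0$ be minimal with $F \in \G_m$ and suppose $m > k$. Following the linear template, define
\[
\Psi_m \colon \G_m \to \C[\proQ_1, \dots, \proQ_l], \quad G \mapsto \bigl([\text{total-degree-}m \text{ part}]\ G(\mathbf{P}\stimes\mathbf{Q})\bigr)\big|_{\proP_1 = \cdots = \proP_l = 1}.
\]
The hypothesis on $F$ forces $\Psi_m(F) = 0$, while the forward direction at level $m-1$ gives $\G_{m-1} \subseteq \ker \Psi_m$. It therefore suffices to establish the reverse inclusion for $l$ sufficiently large, contradicting the minimality of $m$. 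Since the quotient $\G_m/\G_{m-1}$ is spanned by $\{\ChSpin_\pi + \G_{m-1} : \pi \in \OP,\ |\pi| + \ell(\pi) = m\}$, the task reduces to showing that the images $\Psi_m(\ChSpin_\pi)$ are linearly independent.

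The crux, and main obstacle, is a computation showing that $\Psi_m(\ChSpin_\pi)$ has top $\mathbf{Q}$-degree exactly $|\pi|$ with leading coefficient equal to $p_\pi(\proQ_1, \dots, \proQ_l)$. Starting from \cref{lem:top-degree-spin-Stanley} and the homogeneous substitution \eqref{eq:double-multirectangular-H}, a K\"onig-type deficiency analysis of the bipartite intersection graph of each minimal factorization $(\sigma_1, \sigma_2)$ of $\pi$ (a forest with $\ell(\pi)$ tree components) shows that the top $\mathbf{Q}$-degree of $N_{\sigma_1,\sigma_2}(\Dover(\mathbf{P}\stimes\mathbf{Q}))\big|_{\mathbf{P}=\mathbf{1}}$ equals $m - \nu(\sigma_1,\sigma_2)$, where $\nu$ is the maximum matching size. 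The value $|\pi|$ is attained precisely on the ``star'' factorizations, in which each $\pi$-cycle component is a bipartite star centered either at the single $\sigma_1$-cycle or at the single $\sigma_2$-cycle. All such factorizations carry sign $(-1)^{\sigma_1} = +1$ (using that parts of $\pi$ are odd) and prefactor $\tfrac{1}{2^{\ell(\pi)}}$, and their total top $\mathbf{Q}$-degree contribution factorizes across $\pi$-cycles with each cycle of length $\pi_j$ contributing $2\,p_{\pi_j}(\proQ)$: one $p_{\pi_j}$ from the $\kappa_2 \leq l$ portion of the sum (direct calculation) and one from the $\kappa_2 > l$ portion via Abel summation using $\sum_{s \geq t}(\proQ_s - \proQ_{s+1}) = \proQ_t$. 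Multiplying the $2^{\ell(\pi)} p_\pi(\proQ)$ total by the prefactor $\tfrac{1}{2^{\ell(\pi)}}$ yields $p_\pi(\proQ)$, as claimed.

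The conclusion then follows by the standard argument: in any vanishing combination $\sum_\pi a_\pi \Psi_m(\ChSpin_\pi) = 0$, let $n = \max\{|\pi| : a_\pi \neq 0\}$; comparing the $\mathbf{Q}$-degree-$n$ parts (noting that summands with $|\pi| < n$ have strictly smaller $\mathbf{Q}$-degree) yields $\sum_{|\pi|=n} a_\pi p_\pi = 0$, and linear independence of power sums in $l \geq n$ variables forces $a_\pi = 0$ for all such $\pi$, contradicting the choice of $n$.
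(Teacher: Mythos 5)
Your proof is correct and follows the paper's own plan: the forward direction from \cref{lem:top-degree-spin-Stanley}, the backward direction via the minimality argument with $\Psi^{\text{spin}}_m$, the reduction to linear independence of $\Psi^{\text{spin}}_m(\ChSpin_\pi)$, and the crucial identification of the leading $\proQ$-coefficient of $\Psi^{\text{spin}}_m(\ChSpin_\pi)$ as $p_\pi(\proQ)$. The only local difference is in how you extract this leading coefficient: you phrase the degree bound in terms of maximum matchings (K\"onig) on the bipartite intersection forest and then compute the star-factorization contributions algebraically from \eqref{eq:explicit-stanley} together with the homogeneous substitution \eqref{eq:double-multirectangular-H} and an Abel-summation telescope, whereas the paper counts colorings of $\Dover(\proQ_1,\dots,\proQ_l)$ directly by splitting cycles into ``special'' (color in $[l]$) and ``non-special'' ones; these are equivalent analyses yielding the same $2^{\ell(\pi)}$ star factorizations, each with sign $+1$ and prefactor $2^{-\ell(\pi)}$, and the same total $p_\pi(\proQ)$.
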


\begin{proof}
We revisit the proof of \cref{prop:filtration-and-degree-linear} and review the necessary changes. 
	
We replace each occurrence of $\F_k$ by $\G_k$ and each linear character
$\Ch_\pi$ with $\pi\in\Part$ by its spin counterpart $\ChSpin_\pi$ with
$\pi\in\OP$. Reference to \cref{lem:top-degree-spin-Stanley-linear} should be
replaced by \cref{lem:top-degree-spin-Stanley}.

Instead of $\Psi_m$ we consider the map
\begin{multline}
\label{eq:homogeneous-spin}
\Psi^{\text{spin}}_m \colon \G_m \ni G \mapsto \\ \left. \Big( [\text{homogeneous part of degree $m$}]\
G(\mathbf{P}\stimes\mathbf{Q}) \Big) \right|_{\proP_1=\cdots=\proP_l=1}
\end{multline}
which selects the top-degree homogeneous part of spin Stanley's polynomial and
\emph{afterwards} substitutes $\proP_1=\cdots=\proP_l=1$.

\medskip

The only part which requires some changes in the spin context is the proof that
if $m=|\pi|+\ell(\pi)$ then
$\Psi^{\text{spin}}_m(\ChSpin_\pi)\in\Q[\proQ_1,\dots,\proQ_l]$ is a polynomial
of degree $|\pi|$ and its homogeneous part of this maximal degree is equal to
the power-sum symmetric function $p_\pi(\proQ_1,\dots,\proQ_l)$.
We present this
proof below.

\smallskip

Let us fix some $\sigma_1,\sigma_2\in\Sym{|\pi|}$ which contribute to
\eqref{eq:homogeneous-top-degree-stanley}. The colorings which contribute to
$N_{\sigma_1,\sigma_2}(\lambda)$ with 
\[\lambda=\Dover\big( (1,\dots,1) \stimes
\mathbf{Q} \big)= \Dover(\proQ_1,\dots,\proQ_l)\] 
can be enumerated by the following algorithm. Firstly, we select some set of
cycles of $\sigma_1$ and some set of cycles of $\sigma_2$ (we will call the
cycles which belong to them \emph{special cycles}). Secondly, we assign to these
special cycles the values of $f_1$ and $f_2$ from the set $[l]$ in an arbitrary
way. 
The above two steps do not depend on the variables $\proQ_1,\dots,\proQ_l$, but they do depend on $l$, the number of the variables.

In the third step we associate to all non-special cycles the values of $f_1$ and
$f_2$ from the set $\{l+1,l+2,\dots\}$ in such a way that $f=(f_1,f_2)$ is a
colouring of $(\sigma_1,\sigma_2)$ which is compatible with $\lambda$. 
Without loss of generality we may assume that
whenever two cycles $c_1\in C(\sigma_1)$, $c_2\in
C(\sigma_2)$ intersect, at least one of them is special; otherwise the column
$f_1(c_1)$ and the row $f_2(c_2)$ would intersect outside of the Young
diagram $\lambda$ and there would be no such colourings which are compatible with $\lambda$. 
For each $i\in [l]$ in $i$-th row of $\lambda$ there are
$\proQ_{i}+i-1$ 
boxes and $l$ of them are forbidden by non-speciality requirement; it follows that
the number of choices in this third step is equal to
	\begin{multline}
	\label{eq:number-of-choices} 
	\prod_{\substack{c_1 \in C(\sigma_1), \\ \text{$c_1$ is not special} }} \left[ \proQ_{g_1(c_1)} + g_1(c_1)-1 -l  \right] 
	\times \\
	\prod_{\substack{c_2 \in C(\sigma_2), \\ \text{$c_2$ is not special} }} 
	\left[ \proQ_{g_2(c_2)} + g_2(c_2)- 1- l \right] ,
	\end{multline}
		 where for $c_1\in C(\sigma_1)$, $c_2\in C(\sigma_2)$ which are not special we define
	\begin{align*} 
	g_1(c_1) &= \max \big\{ f_2(c_2) : c_2\in C(\sigma_2), \text{$c_2$ is special} \\ & \qquad\qquad\qquad\qquad \text{and the cycles $c_1$ and $c_2$ intersect} \big\},\\
	g_2(c_2) &= \max \big\{ f_1(c_1) : c_1\in C(\sigma_1), \text{$c_1$ is special} \\ & \qquad\qquad\qquad\qquad \text{and the cycles $c_1$ and $c_2$ intersect} \big\}.
	\end{align*} 

\smallskip

The exact form of \eqref{eq:number-of-choices} is not really important; we will
use only the observation that it is a polynomial in $\proQ_1,\dots,\proQ_l$ of
degree equal to the number of non-special cycles. Since we are interested in the
top-degree part of the polynomial
$\Psi^{\text{spin}}_m(\ChSpin_\pi) \in\Q[\proQ_1,\dots,\proQ_l]$, hence our goal is
to maximize the number of such non-special cycles.

\smallskip

In the following it will be sometimes convenient to identify a given cycle $c\in
C(\sigma_1)\sqcup C(\sigma_2)$ with its support $c\subseteq [|\pi|]$ which is a non-empty subset of $[|\pi|]=\{1,\dots,|\pi|\}$. 
We denote by
$N\subseteq C(\sigma_1)\sqcup C(\sigma_2)$ the set of all non-special cycles. We
consider an arbitrary map $s\colon N\to[|\pi|]$ which to any non-special cycle
associates one of its elements, i.e.~$f(c)\in c$ for $c\in C(\sigma_1)\sqcup
C(\sigma_2)$. The map $s$ is injective; otherwise this would contradict the
assumption that any two non-special cycles are disjoint. This has twofold
consequences. 

Firstly, the number of non-special cycles is bounded from above by
$|\pi|$ so the degree of the polynomial
$N_{\sigma_1,\sigma_2}(\lambda)\in\Z[\proQ_1,\dots,\proQ_l]$ is bounded from
above by $|\pi|$. 

Secondly, if this bound on the number of non-special cycles is saturated, the
map $s$ is a bijection. In this case for each $x\in[|\pi|]$ exactly one of the
following two possibilities holds true: either 
\begin{itemize}
	\item $(x)$ is a non-special cycle
of $\sigma_1$ and $x$ belongs to some special cycle of $\sigma_2$, or 
\item $(x)$ is
a non-special cycle of $\sigma_2$ and $x$ belongs to some special cycle of
$\sigma_1$.
\end{itemize}

The above discussion implies that the product $\sigma_1 \sigma_2\in\Sym{|\pi|}$
can be calculated in a very simple way. Indeed, let us consider some special
cycle $c$; let us say that it is a cycle of $\sigma_1$. Since $\sigma_2$
restricted to the support of $c$ is equal to the identity, the restriction of
the product $\sigma_1 \sigma_2$ to the support of $c$ coincides with the cycle $c$.
An analogous result remains true in the case when $c$ is a special cycle of
$\sigma_2$. In this way we proved that the product $\sigma_1\sigma_2$ coincides
with the collection of the special cycles.

\smallskip

In this way we proved that if the number of non-special cycles achieves its
maximal value $|\pi|$, each cycle of $\pi=\sigma_1\sigma_2\in\Sym{|\pi|}$ is either a special cycle of
$\sigma_1$ or a special cycle of $\sigma_2$ and there are no other special
cycles.  

For example, if $\pi=(\pi_1)\in\OP$ is an odd partition which consists of only one part and 
\[ \pi=(1,2,\dots,\pi_1)\in\Sym{\pi_1}\]
is the corresponding permutation, the maximal number of non-special cycles is obtained for the following two choices:
\begin{align*}
\sigma_1&=\id=\underbrace{(1)(2)\cdots(\pi_1)}_{\text{non-special cycles}}, &  \sigma_2&=\underbrace{(1,2,\dots,\pi_1)}_{\text{special cycle}}, \\
\intertext{and}
\sigma_1&=\underbrace{(1,2,\dots,\pi_1)}_{\text{special cycle}}, &
\sigma_2&= 
\id=\underbrace{(1)(2)\cdots(\pi_1)}_{\text{non-special cycles}}.
\end{align*}
If $j$ denotes the value of the function $f_i$, $i\in\{1,2\}$, on the unique
special cycle, the homogeneous top-degree part of the corresponding summand
\[ \frac{1}{2^{|\sigma_1\vee \sigma_2|}}\ (-1)^{\sigma_1}\ N_{\sigma_1,\sigma_2}(\lambda) \]
on the right-hand side of \eqref{eq:homogeneous-top-degree-stanley} is equal to
\[ \frac{1}{2} \sum_j \proQ_j^{\pi_1} \]
because $\pi_1$ is an odd integer.

In general, for each cycle of $\pi$ there are two choices analogous to the ones above.
It follows that in total there are $2^{\ell(\pi)}$ choices for $\sigma_1$ and
$\sigma_2$ and which of the cycles are special.
For each such a choice the total
contribution of \eqref{eq:number-of-choices} is a polynomial of degree $|\pi|$,
with the homogeneous part equal to $\frac{1}{2^{\ell(\pi)}}
p_{\pi}(\proQ_1,\dots,\proQ_l)$, a multiple  of the power-sum symmetric
polynomial. This concludes the proof. 
\end{proof}

\section{Proof of \cref{prop:isomorphism,thm:filtration}}
\label{sec:proof-of-isomorphism}
\label{sec:proof-of-filtration}

\begin{proof}[Proof of \cref{prop:isomorphism}]
\

	\emph{Proof that $\double(\KO)\subseteq\Gamma$.}
	The algebra $\KO$ has an algebraic basis given by the functions $S_2,S_3,\dots$ with
	\[ S_k(\lambda) = (k-1) \iint_{(x,y)\in\lambda} (x-y)^{k-2} \dif x \ \dif y,
	 \]
	see \cite{DolegaFeraySniady2008}.
	
	The $i$-th row of a shifted Young diagram $\xi$ corresponds to two rectangles in the double $D(\xi)$. It follows that	
	\begin{multline*} 
	(\double T_k)(\xi) = \\
	\frac{1}{k} \sum_{i}  \left( - \xi_i^k + (\xi_i+1)^k + 0^k - 1^k \right) +  \left( -(1-\xi_i)^k + (-\xi_i)^k +1^k - 0^k \right).
	\end{multline*}
	It is easy to check that the polynomial in the variable $\xi_i$ which appears
in the above formula is an odd polynomial; it follows therefore that $\double
T_k\in \C[p_1,p_3,p_5,\dots]$ is a symmetric function which is supersymmetric.
By a result of Ivanov \cite[Section 6]{Ivanov2004} it follows that $\double T_k\in \Gamma$.

	\medskip
	
	\emph{Proof that $\double$ is surjective.} We will
	prove a stronger result $\G_k \subseteq \double(\F_k)$ below, in the proof of
	\cref{thm:filtration}.
\end{proof}

\begin{proof}[Proof of \cref{thm:filtration}]
	
\
	
\emph{Proof that the family $(\G_k)$ is a filtration on the algebra $\Gamma$.}
Suppose that $F_1\in \G_{k_1}$ and $F_2\in \G_{k_2}$. We apply
\cref{prop:filtration-and-degree-spin}; for $i\in\{1,2\}$ it follows that
$F_i(\mathbf{P}\stimes\mathbf{Q})\in\C[\proP_1,\dots,\proP_l,\proQ_1,\dots,\proQ_l]$ 
is a polynomial of degree at most $k_i$ hence
the product $F_1(\mathbf{P}\stimes\mathbf{Q}) F_2(\mathbf{P}\stimes\mathbf{Q})$
is a polynomial of degree at most $k_1+k_2$. We apply
\cref{prop:filtration-and-degree-spin} again; it follows that $F_1 F_2\in
\G_{k_1+k_2}$. This implies that $(\G_k)$ is indeed a filtration.
	
\medskip
	
\emph{Proof of the inclusion $\double(\F_k) \subseteq \G_k$.} Suppose $F\in\F_k$. By
\cref{prop:filtration-and-degree-linear} and \cref{lem:multi-strict-vs-usual} it
follows that $\double(F)(\mathbf{P}\stimes\mathbf{Q})$ is a polynomial of degree
at most $k$. We apply \cref{prop:filtration-and-degree-spin}; it follows that
$\double(F)\in \G_k$, as required.

\medskip

\emph{Proof of the inclusion $\G_k \subseteq \double(\F_k)$.} Let us fix an integer $k\geq
0$. Let $\pi$ be an arbitrary integer partition such that $|\pi|+\ell(\pi)\leq
k$. We define
\[ x_\pi:= - \sum_{\partition} \left(-\frac{1}{2}\right)^{|\partition|}\ (2|\partition|-3)!!\
\prod_{b\in \partition} \Ch_{(\class_i:i \in b)} \in \F_k,
\]
where the sum runs over all set-partitions of the set $[\ell(\class)]$.
\cref{theo:spin-in-linear} implies that $\double\left(x_\pi\right)=\ChSpin_\pi$;
in this way we proved that $\ChSpin_\pi\in \double(\F_k)$. From the definition
\eqref{eq:filtration2} of $\G_k$ it follows that $\G_k \subseteq \double(\F_k)$,
as required.
\end{proof}

\section*{Acknowledgements}

Research of SM was supported by JSPS KAKENHI Grant Number 17K05281.
Research of P\'S was supported by \emph{Narodowe Centrum Nauki}, grant number
2017/26/A/ST1/00189.

We thank Valentin F\'eray and Maciej Do\l\k{e}ga for several inspiring discussions. 

\biblio

\end{document}